\numberwithin{equation}{section}
\newtheorem{thm}{Theorem}[section]
\newtheorem{lem}[thm]{Lemma}
\newtheorem{cor}[thm]{Corollary}
\newtheorem{prop}[thm]{Proposition}
\newtheorem{property}{Property}
\newtheorem{defn}[thm]{Definition}
\theoremstyle{definition}
\newtheorem{rem}[thm]{Remark}
\newtheorem{notation}{Notation}
\theoremstyle{remark}
\newcommand{\norm}[1]{\left\Vert#1\right\Vert}
\newcommand{\abs}[1]{\left\vert#1\right\vert}
\newcommand{\nn}{\mathbf{\hat{n}}}
\newcommand{\Hdiv}{H(\textrm{div},D)}
\newcommand{\HS}[2]{H^{#1}(#2)}
\newcommand{\Jo}{\mathbf{J}_{\varnothing}}
\newcommand{\sigo}{\sigma_{\varnothing}}
\DeclareMathOperator{\esssup}{ess\,sup}
\DeclareMathOperator{\essinf}{ess\,inf}
\patchcmd{\abstract}{\scshape\abstractname}{\textbf{\abstractname}}{}{}
\def\@makefnmark{} 
\pgfplotsset{compat=1.18}
\begin{document}
\title{The Kernel Method for Electrical Resistance Tomography}
\author[A. Tamburrino, V. Mottola]{
Antonello Tamburrino$^{1,2}$, Vincenzo Mottola$^1$}\footnote{\\$^1$Dipartimento di Ingegneria Elettrica e dell'Informazione \lq\lq M. Scarano\rq\rq, Universit\`a degli Studi di Cassino e del Lazio Meridionale, Via G. Di Biasio n. 43, 03043 Cassino (FR), Italy.\\
Email: antonello.tamburrino@unicas.it {\it (corresponding author)}, vincenzo.mottola@unicas.it.}
\footnote{$^2$ EUT+ Institute of Nanomaterials and Nanotechnologies-EUTINN, European University of Technology, European Union.}

\begin{abstract} This paper treats the inverse problem of retrieving the electrical conductivity of a material starting from boundary measurements in the framework of Electrical Resistance Tomography (ERT). In particular, the focus is on non-iterative reconstruction methods suitable for real-time applications. 

In this work, the Kernel Method, a new non-iterative reconstruction method for Electrical Resistance Tomography, is presented. The imaging algorithm addresses the problem of retrieving one or more anomalies of arbitrary shape, topology, and size embedded in a known background (the inverse obstacle problem).

The foundation of the Kernel Method is based on the idea that if a proper current density applied at the boundary (Neumann data) of the domain exists such that it is able to produce the same measurements with and without the anomaly, then this boundary source produces a power density that vanishes in the region occupied by the anomaly, when applied to the problem involving the background material only.

This new tomographic method has a simple numerical implementation that requires a very low computational cost.

In this paper, the theoretical foundation of the Kernel Method is provided, and an extensive numerical campaign proves the effectiveness of this new imaging method.

\noindent \textsc{\bf Keywords}: Inverse electrical conductivity problem, Kernel Method, shape reconstruction, Electrical Resistance Tomography.

\noindent\textsc{\bf MSC 2010}: 35R30, 35Q60, 35J25.\\
\end{abstract}
\maketitle
\markright{THE KERNEL METHOD}

\section{Introduction} 
Electrical Resistance Tomography (ERT) is a widely used tomographic inspection method that aims to retrieve the electrical conductivity $\sigma$ within a physical body from boundary measurements of currents and voltages in steady-state operation. Throughout the years, it has been successfully applied to a wide range of different applications such as the petroleum and chemical process industry~\cite{Ch19, Du15, Ma09}, medicine~\cite{Ho94} (for cranial imaging \cite{Tarassenko1984-im}, lung monitoring \cite{Meier2008-iu,Adler1995ImagingOP}, thermal monitoring of hyperthermia treatment \cite{Conway1987-xj}, paediatric lung disease \cite{Pham2011-po}, breast imaging \cite{Zain2014-sc} and brain imaging \cite{Holder1996-hn}), semiconductor manufacturing~\cite{Kr03}, civil engineering~\cite{Jo01}, carbon nanotube manufacturing~\cite{Ho07}, biological culture analysis~\cite{Li08}, etc.

Different approaches to solving the ERT problem can be found in the literature, leading to two main classes: iterative and non-iterative imaging methods. Most reconstruction methods are based on iterative minimisation of a proper cost function that encodes the \lq\lq distance\rq\rq \ between the measured and synthetic data for the current estimate of $\sigma$, and the proper a priori information. A list of references to this kind of approach can be found in~\cite{Lionheart_2004}. These methods can achieve good quality for the reconstructions. Still, they are (i) affected by the local minima problem, i.e. they are prone to false solutions, and (ii) computationally expensive because of the ill-posedness and the nonlinearity of the underlying inverse problem that calls for a large number of iterations.

The other class is that of non-iterative methods, where a proper indicator function is identified and utilised to provide reconstructions in just one step. The main advantage of this approach is its ability to achieve real-time performance, a feature as rare as it is required in applications. Several non-iterative methods are available in the literature. The first method proposed was the Linear Sampling Method by Colton and Kirsch \cite{LSM}, followed by the Factorization Method by Kirsch \cite{art:Ki98, Br01} and the Enclosure Method \cite{Ik99} proposed by Ikehata. A method based on the MUSIC signal processing algorithm was also introduced by Devaney in~\cite{Devaney2000}, while the Monotonicity Principle Method was proposed by Tamburrino and Rubinacci in 2002~\cite{Ta02,MPM_2025}. 

This paper contributes to enlarging the class of non-iterative methods for ERT. Specifically, a new real-time inversion method is proposed for the inverse obstacle problem, aiming to retrieve the shape, position, and dimensions of one or more anomalies of arbitrary size and topology embedded in a known background.

The method is based on some peculiar properties of the Neumann-to-Dirichlet operator, i.e., the operator  $\Lambda : g \mapsto \varphi|_{\partial \Omega}$ mapping the electrical current injected into the domain $\Omega$ through its boundary $\partial \Omega$ and the corresponding scalar potential $\varphi$ evaluated on $\partial \Omega$.
The foundation of the proposed method is that when an electrical current density $g$ applied to the boundary of the domain $\Omega$ makes the difference between the measured boundary voltages with and without the anomaly negligible, then the electrical current density circulating within the conducting domain $\Omega$ does not interact with the anomalous region $D$. This corresponds to having an ohmic power density $p_{\varnothing}(x)$ for the reference configuration without the anomaly, which disappears in $D$.

The boundary data $g$ is required to solve $\Lambda_D g =  \Lambda_{\varnothing}g$, i.e. $g$ is in the \emph{kernel} of the operator $\Lambda_D - \Lambda_{\varnothing}$, giving the name of the proposed method.
Nevertheless, due to the Unique Continuation Principle, the kernel of $\Lambda_D - \Lambda_{\varnothing}$ consists of the trivial element only. However, it is possible to make the difference $\Lambda_D g -  \Lambda_{\varnothing}g$
arbitrarily small by considering the sequence of the eigenfunctions of $\Lambda_D - \Lambda_{\varnothing}$. The sequence of the eigenfunction is then shown to make the power density $p_{\varnothing}(x)$ disappear faster in the region occupied by the anomaly than in the external regions.

The roadmap to establish an imaging method is based on two main milestones. First, a characterisation of the unknown is given in terms of proper sequences, termed as \lq\lq kernel sequences\rq\rq, that is, proper sequences of boundary data making $\Lambda_D g -  \Lambda_{\varnothing}g$ arbitrarily small. This characterisation forms the theoretical foundation of the proposed method and is presented in \Cref{sec:pbg_eig}. Second, the characterisation of the anomalies found in \Cref{sec:pbg_eig} requires testing the boundedness of a proper ratio for \emph{all} kernel sequences. This is impractical, from an implementation perspective, but in \Cref{sec_ti} it is found that, under the well-known assumptions appearing in the Factorisation Method, the test can be reduced to consider only the sequence $\{g_n \}_{n \in \mathbb{N}}$ of the eigenfunctions of the operator $\Lambda_D g -  \Lambda_{\varnothing}g$. This is a remarkable fact, and a specific Section (\Cref{sec_eig_fun}) is devoted to understanding the reason why the sequence of the eigenfunctions $\{g_n \}_{n \in \mathbb{N}}$ appears to be so special. Specifically, it is found that $\{g_n \}_{n \in \mathbb{N}}$ represents a very large class of all kernel sequences required in the theoretical characterisation of the anomaly of \Cref{sec:pbg_eig}.

The real-time inversion method is then easily derived as (i) evaluate the aforementioned eigenfunctions $g_n$ starting from the measurement of $\Lambda_D$, (ii) compute $p_{\varnothing}^n(x)$, the ohmic power density in $\Omega$ for the reference configuration when a proper $g_n$ is applied to the boundary $\partial \Omega$, and (iii) reconstruct the anomaly as the region in which $p_{\varnothing}^n(x)$ is almost vanishing.

The paper is organised as follows. In Section~\ref{sec:sop} the problem is stated and some important properties of NtD map are recalled; in Section~\ref{sec:pre} the core idea behind the method is presented, together with the role of the Unique Continuation Principle; in Section \ref{sec:pbg_eig} a characterization of the anomaly in terms of proper sequences (Kernel Sequences) is provided, while a discussion of the role played by the eigenfunctions $g_n$ is in \Cref{sec_eig_fun}; in Section \ref{sec_ti} the behaviour of $p_{\varnothing}^n(x)$, when the reference configuration is driven by $g_n$ is investigated; in Section \ref{sec:main} the Kernel Method is presented;
 in Section~\ref{sec:noise}, the treatment of the noise is proposed, together with the final version of the reconstruction algorithm; in Section~\ref{sec:anex}, the method is applied to the first cases of study, where the NtD operator can be analytically evaluated and the data are noise-free. The numerical results are in Section~\ref{sec:num}, while the conclusions are drawn in Section~\ref{sec:conclusions}.

\section{Statement of the problem}
\label{sec:sop}
Throughout this paper, $\Omega\subset\mathbb{R}^n$, with $n\geq2$, is the region occupied by the conducting material. It is assumed that $\Omega$ is a simply connected and bounded domain with a smooth boundary. The normal derivative defined on $\partial\Omega$ is denoted by $\partial_n$.

Hereafter, for the convenience of the reader, the definition of the functional spaces underlying the mathematical formulation of the problem is recalled:
\begin{align*}
H^1_{\diamond}(\Omega)&=\{u\in H^1(\Omega)\,|\,\begingroup\textstyle\int\endgroup_{\partial\Omega}u(x)\,dx=0\} \\
L^2_{\diamond}(\partial\Omega)&=\{u\in L^2(\partial\Omega)\,|\,\begingroup\textstyle\int\endgroup_{\partial\Omega}u(x)\,dx=0\} \\
L^{\infty}_+(\Omega)&=\{u\in L^{\infty}(\Omega)\,|\, \esssup_{x\in\Omega} u(x) >0\}.
\end{align*}

Assuming a linear and isotropic conductive material in $\Omega$, the constitutive relationship relating the electrical current density $\mathbf{J}$ to the electric field $\mathbf{E}$ is $\mathbf{J}(x)=\sigma(x)\mathbf{E}(x)$, where $\sigma\in L_+^{\infty}(\Omega)$ is the electrical conductivity.

In terms of the electric scalar potential $\varphi$, the steady current problem is formulated as
\begin{equation}
\begin{cases}
\nabla\cdot\left(\sigma(x)\nabla \varphi(x)\right)=0 & \text{in $\Omega$}, \\
\sigma(x) \partial_n \varphi(x)=g(x) & \text{on $\partial\Omega$},
\end{cases}
\label{eqn:dirprob}
\end{equation}
where $\varphi\in H^1_{\diamond}(\Omega)$, $g\in L^{2}_{\diamond}(\partial\Omega)$ is the prescribed normal component of the current density entering the domain $\Omega$, and $\mathbf{E}(x)=-\nabla \varphi(x)$.

Problem~\eqref{eqn:dirprob} is meant in the weak form, that is,
\begin{equation}\label{eqn:weakprob}
\int_{\Omega}\sigma(x)\nabla \varphi(x)\cdot\nabla \psi(x)\,dx=\int_{\partial\Omega}g(x)\psi(x)\,dx \quad \forall \psi\in H^1_{\diamond}(\Omega).
\end{equation}

The Neumann-to-Dirichlet (NtD) operator, which plays a key role since it represents the boundary measurements, maps the current flux $g$ applied to the boundary $\partial \Omega$, to the corresponding electric scalar potential evaluated on $\partial \Omega$, i.e., $\varphi\rvert_{\partial\Omega}$
\begin{equation*}
\Lambda_{\sigma}:g\in L^{2}_{\diamond}(\partial\Omega) \to \varphi\rvert_{\partial\Omega}\in L^{2}_{\diamond}(\partial\Omega).
\end{equation*}
The NtD operator has the following key properties.
\begin{property} [see \cite{Br01, Gi90}]
\label{prop:compact}
$\Lambda_{\sigma} :L^2_{\diamond}(\partial\Omega)\to L_{\diamond}^2(\partial\Omega)$ is compact, self-adjoint and positive.
\end{property}

\begin{property} [see \cite{Gi90}]
\label{prop:mono}
    $\Lambda_{\sigma}$ is monotonically decreasing in $\sigma$, i.e,
\begin{displaymath}
\left\langle g,\Lambda_{\sigma_2}(g)\right\rangle\geq\left\langle g,\Lambda_{\sigma_1}(g)\right\rangle\quad\text{for $\sigma_2\leq\sigma_1$},
\end{displaymath}
where $\sigma_2\leq\sigma_1$ means that $\sigma_2(x)\leq\sigma_1(x)\text{ a.e. in $\Omega$}$.
\end{property}

\begin{lem}[see \cite{Ge08, HaUl13, art:ik98}]
\label{lem:estimate}
Let $\sigma_1, \sigma_2 \in L_+^{\infty}(\Omega)$ be two electrical conductivities and let $\varphi_i\in H^1(\Omega)$ be the solution of the Neumann problem
\begin{equation*}
\nabla\cdot\left(\sigma_i(x)\nabla \varphi_i(x)\right)=0\: \text{in $\Omega$} \quad \text{and} \quad \sigma_i(x)\partial_n \varphi_i(x)=g(x)\: \text{on $\partial\Omega$},
\end{equation*}
where $g\in L^{2}_{\diamond}(\partial\Omega)$ is the applied boundary current density,
then
\begin{equation*}
       \int_{\Omega}\left(\sigma_2-\sigma_1\right)\abs{\nabla \varphi_2}^2\,dx\leq\left\langle(\Lambda_{\sigma_1}-\Lambda_{\sigma_2})(g),g\right\rangle\leq \int_{\Omega} \frac{\sigma_2}{\sigma_1}\left(\sigma_2-\sigma_1\right)\abs{\nabla \varphi_2}^2\,dx
    \end{equation*}
\end{lem}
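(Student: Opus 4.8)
The plan is to reduce everything to two energy identities and then a single application of a weighted Cauchy--Schwarz inequality. First I would record the variational characterization of the quadratic form associated with the NtD operator: testing the weak formulation~\eqref{eqn:weakprob} for $u_i$ against $\varphi=u_i$ gives the energy identity
\[
\left\langle \Lambda_{\sigma_i}(g),g\right\rangle=\int_{\partial\Omega}g\,u_i\,dx=\int_\Omega\sigma_i\abs{\nabla u_i}^2\,dx.
\]
Because $u_1$ and $u_2$ carry the \emph{same} Neumann datum $g$, subtracting the two weak formulations shows the cross relation $\int_\Omega\sigma_1\nabla u_1\cdot\nabla\varphi\,dx=\int_\Omega\sigma_2\nabla u_2\cdot\nabla\varphi\,dx$ for every admissible $\varphi$; in particular, choosing $\varphi=u_2$ gives $\int_\Omega\sigma_1\nabla u_1\cdot\nabla u_2\,dx=\left\langle\Lambda_{\sigma_2}(g),g\right\rangle$.

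Next I would derive the central algebraic identity by completing the square. Expanding $\int_\Omega\sigma_1\abs{\nabla(u_1-u_2)}^2\,dx$ and substituting the two facts above collapses the cross term and produces
\[
\left\langle(\Lambda_{\sigma_1}-\Lambda_{\sigma_2})(g),g\right\rangle=\int_\Omega\sigma_1\abs{\nabla(u_1-u_2)}^2\,dx+\int_\Omega(\sigma_2-\sigma_1)\abs{\nabla u_2}^2\,dx.
\]
Since $\sigma_1>0$ a.e., the first integral on the right is nonnegative, and the lower bound follows immediately by discarding it.

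For the upper bound I would estimate the leftover term $\int_\Omega\sigma_1\abs{\nabla w}^2\,dx$, with $w=u_1-u_2$. Taking $\varphi=w$ in the cross relation and rearranging yields $\int_\Omega\sigma_1\abs{\nabla w}^2\,dx=\int_\Omega(\sigma_2-\sigma_1)\nabla u_2\cdot\nabla w\,dx$. I would then split the weight as $(\sigma_2-\sigma_1)=\bigl[(\sigma_2-\sigma_1)/\sqrt{\sigma_1}\bigr]\sqrt{\sigma_1}$ and apply Cauchy--Schwarz, bounding the right-hand side by $B^{1/2}A^{1/2}$, where $A=\int_\Omega\sigma_1\abs{\nabla w}^2\,dx$ and $B=\int_\Omega(\sigma_2-\sigma_1)^2\sigma_1^{-1}\abs{\nabla u_2}^2\,dx$. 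The resulting inequality $A\le B^{1/2}A^{1/2}$ forces $A\le B$. Feeding $A\le B$ back into the central identity and using the pointwise identity $(\sigma_2-\sigma_1)^2/\sigma_1+(\sigma_2-\sigma_1)=(\sigma_2/\sigma_1)(\sigma_2-\sigma_1)$ delivers the claimed upper bound.

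The only real obstacle is the upper bound: one must notice that the residual Dirichlet energy $\int_\Omega\sigma_1\abs{\nabla w}^2\,dx$ is self-referential through the cross relation, so that a $\sigma_1$-weighted Cauchy--Schwarz lets it be absorbed rather than estimated crudely. The lower bound, by contrast, is a pure completing-the-square argument and is essentially free once the energy identity is in hand. The only routine point requiring care is that every test function may be normalized to lie in $H^1_{\diamond}(\Omega)$, which is harmless since $g$ has zero boundary mean.
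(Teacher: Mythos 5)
Your proof is correct. One thing to be aware of: the paper itself contains no proof of this lemma --- it is quoted as a known result and delegated to the cited references \cite{Ge08, HaUl13, art:ik98} --- so the comparison is with the standard argument in that literature rather than with an in-paper proof. Your energy identity, cross relation, and the decomposition
\begin{equation*}
\left\langle(\Lambda_{\sigma_1}-\Lambda_{\sigma_2})(g),g\right\rangle=\int_\Omega\sigma_1\abs{\nabla(u_1-u_2)}^2\,dx+\int_\Omega(\sigma_2-\sigma_1)\abs{\nabla u_2}^2\,dx
\end{equation*}
reproduce exactly the classical route to the lower bound. For the upper bound, the cited references typically avoid Cauchy--Schwarz and instead perform a \emph{second} completion of the square, verifying directly that
\begin{equation*}
\left\langle(\Lambda_{\sigma_1}-\Lambda_{\sigma_2})(g),g\right\rangle=\int_\Omega\frac{\sigma_2}{\sigma_1}(\sigma_2-\sigma_1)\abs{\nabla u_2}^2\,dx-\int_\Omega\sigma_1\abs{\nabla u_1-\tfrac{\sigma_2}{\sigma_1}\nabla u_2}^2\,dx,
\end{equation*}
and then discarding the nonpositive term. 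Your variant --- expressing the residual energy $A=\int_\Omega\sigma_1\abs{\nabla(u_1-u_2)}^2\,dx$ through the cross relation and absorbing it via the weighted Cauchy--Schwarz estimate $A\le B^{1/2}A^{1/2}$, followed by the pointwise identity $(\sigma_2-\sigma_1)^2/\sigma_1+(\sigma_2-\sigma_1)=(\sigma_2/\sigma_1)(\sigma_2-\sigma_1)$ --- is equally valid and in substance equivalent, since both arguments exploit the same quadratic structure; yours has the small pedagogical advantage of not requiring one to guess the correct square to complete, at the cost of the (harmless) case distinction $A=0$ versus $A>0$ when cancelling $A^{1/2}$. The only hypothesis you should make explicit is a uniform lower bound $\sigma_1\ge c>0$ a.e.\ in $\Omega$, which is needed both to divide by $\sigma_1$ and to legitimize the weighted Cauchy--Schwarz step; this is the intended meaning of $\sigma_1\in L^{\infty}_+(\Omega)$ in this literature, even though the paper's displayed definition (via $\esssup$ rather than $\essinf$) does not actually guarantee it.
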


The inverse problem in Electrical Resistance Tomography, as stated by Calder\`on  \cite{art:Cal80}, is to reconstruct the electrical conductivity $\sigma$ from the knowledge of $\Lambda_{\sigma}$. Given this framework, the present contribution focuses on the problem of retrieving the shape of one or more anomalies  $D_1,D_2,\ldots $, embedded in a known background. In other words, let $D=\bigcup_j D_j$, an electrical conductivity $\sigma_D$ of the form
\begin{equation}\label{eqn:sd}
\sigma_D(x)=
\begin{cases}
\sigma_{bg}(x) & x\in\Omega\setminus D, \\
\sigma_a(x) & x\in D,
\end{cases}
\end{equation}
is considered, where $\sigma_{bg}$ is known, whereas the domains $D_1,D_2,\ldots $ are unknown. 

Defining
\begin{equation}\label{eqn_bound_bg}
    \sigma_{bg}^m=\essinf_{x\in\Omega}{\sigma_{bg}(x)}, \quad \sigma_{bg}^M=\esssup_{x\in\Omega}{\sigma_{bg}(x)}
\end{equation}
and
\begin{equation}\label{eqn_bound_a}
    \sigma_a^m=\essinf_{x\in\Omega}{\sigma_{a}(x)}, \quad \sigma_a^M=\esssup_{x\in\Omega}{\sigma_{a}(x)},
\end{equation}
it is required that $\sigma_a$ and $\sigma_{bg}$ are well-separated, in the sense that
\begin{displaymath}
    \sigma_a^M(x)<\sigma_{bg}^m \quad \text{or} \quad \sigma_{a}^m(x)>\sigma_{bg}^M(x).
\end{displaymath}

The target is to reconstruct the shape $D$ of the region occupied by the anomalies. Each $D_i$ is assumed to be connected and open, the $D_i$s are assumed to be mutually disjoint, and $\overline{D}_i\Subset\Omega$.

\begin{notation}\label{not_1}
    Throughout the paper, $\sigma_A$ denotes an electrical conductivity of the form \eqref{eqn:sd}, where the conductivity differs from the background conductivity $\sigma_{bg}$ only within an open subset $\overline{A} \Subset \Omega$. In particular, $\sigma_{\varnothing}$ denotes the conductivity that coincides with $\sigma_{bg}$ throughout the whole domain $\Omega$.

    Accordingly, $\varphi_A$ denotes the solution to problem \eqref{eqn:dirprob} corresponding to the conductivity $\sigma_A$, and $\Lambda_A$ denotes the associated NtD operator.
\end{notation}
 
\section{UCP and Kernel Method}\label{sec:pre}
The starting point for the proposed method, hereafter referred to as the Kernel Method (KM), is inspired by physical arguments.

Consider two different configurations: the actual one in the presence of the anomalies, described by the electrical conductivity $\sigma_D$, and a second configuration with electrical conductivity $\sigma_{\varnothing}$, i.e. $D=\varnothing$. The second configuration is termed the \emph{reference configuration}.

The Kernel Method is based on the following ideas. First, if, for a proper Neumann boundary condition $g$, the electrical current density in $\Omega$, both in the actual and reference configurations, does not flow through the anomalous region $D$, then the voltage measurements on the boundary are the same, i.e., $\Lambda_D g=\Lambda_{\varnothing}g$.
Second, if there exists a boundary data $g$ that gives the same boundary scalar potential with and without the anomaly, i.e. such that $\Lambda_Dg=\Lambda_{\varnothing}g$, then the corresponding electric current density for the reference configuration vanishes in $D$, as proved in the following results.

\begin{lem}\label{lem:mainl}
Let $\sigma_D$, $\sigma_{\varnothing}$, $\Lambda_D$, $\Lambda_{\varnothing}$, and $\varphi_{\varnothing}$ be defined as in \Cref{not_1}. Then
\begin{equation}\label{eqn:ker}
	\langle\left(\Lambda_D-\Lambda_{\varnothing}\right)g,g\rangle=0 \Longleftrightarrow \int_D\sigo(x) \abs{\nabla \varphi_{\varnothing}(x)}^2\,dx=0.
\end{equation}
\end{lem}
\begin{proof}
Without loss of generality, only the case $\sigma_a^M<\sigma_{bg}^m$ is considered. The other case can be treated with similar arguments.

Applying Lemma \ref{lem:estimate} for $\sigma_1=\sigma_D$ and $\sigma_2=\sigo$, and with some straightforward manipulations, it follows
\begin{equation}\label{eqn:kmequiv}
        k_l \int_{D}\sigma_{bg}\abs{\nabla \varphi_{\varnothing}}^2\,dx\leq\left\langle(\Lambda_D-\Lambda_{\varnothing})g,g\right\rangle\leq k_u\int_{D}\sigma_{bg}\abs{\nabla \varphi_{\varnothing}}^2\,dx.
    \end{equation}
with $k_l$ and $k_u$ two constants independent of the applied boundary data $g$ and the shape or position of the anomaly $D$, specifically
\begin{equation*}
k_l=\frac{\sigma_{bg}^m-\sigma_a^M}{\sigma_{bg}^M},\quad k_u=\frac{\sigma_{bg}^M-\sigma_a^m}{\sigma_a^m}.
\end{equation*}
Since both $k_l$ and $k_u$ are positive because of the assumption $\sigma_a^M<\sigma_{bg}^m$, equation \eqref{eqn:kmequiv} implies that
\begin{equation*}
\left\langle(\Lambda_D-\Lambda_{\varnothing})g,g\right\rangle=0 \Longleftrightarrow \int_D \sigma_{bg} \abs{\nabla \varphi_{\varnothing}}^2\,dx=0.
\end{equation*}
\end{proof}

\begin{cor}\label{cor_1}
Let $\sigma_D$, $\sigma_{\varnothing}$, $\Lambda_D$, $\Lambda_{\varnothing}$, and $\varphi_{\varnothing}$ be defined as in \Cref{not_1}. Then
\begin{equation*}
    \Lambda_Dg=\Lambda_{\varnothing}g \Longleftrightarrow \int_D\sigo(x) \abs{\nabla \varphi_{\varnothing}(x)}^2\,dx=0.
\end{equation*}
\end{cor}
\begin{proof}
    Assuming $\sigma_D < \sigma_{bg}$, the Monotonicity Principle (see Property \ref{prop:mono}) gives $\Lambda_D-\Lambda_{\varnothing}\geqslant 0$. Then, from standard linear algebra arguments, it follows that
\begin{equation}\label{eqn:kmequiv3}
	\Lambda_D g =\Lambda_{\varnothing} g \Longleftrightarrow \langle\left(\Lambda_D-\Lambda_{\varnothing}\right)g,g\rangle=0,
\end{equation}
and therefore, by combining Lemma \ref{lem:mainl} and \eqref{eqn:kmequiv3} the claim follows.
\end{proof}

Summing up, the idea underlying the Kernel Method consists of finding a Neumann boundary condition $g$ such that 
\begin{equation}\label{eqn:idcond}
    \left( \Lambda_D - \Lambda_{\varnothing} \right) g=0,
\end{equation}
and then estimate the anomalous region $D$ as the set of points where the electric current density vanishes when evaluated in the reference configuration driven by the boundary condition $g$. The name Kernel Method comes from the key role that the elements of the kernel of the operator $\Lambda_D-\Lambda_{\varnothing}$ play.

Unfortunately, the Unique Continuation Principle (UCP), which has a central role for Elliptic PDEs, implies that the kernel of the operator $\Lambda_D-\Lambda_{\varnothing}$ is empty, as proved in the following Theorem.

\begin{thm}[Weak UCP~\cite{art:ale12},~\cite{art:gar86}]\label{teo:ucp}
Let $\Omega$ be a connected open subset of $\mathbb{R}^n$, with $n = 2,3$, let $B$ a ball contained in $\Omega$, and $u\in H^1(\Omega)$ the solution of
\begin{displaymath}
    \nabla\cdot(\sigma(x)\nabla u(x))=0 \quad \text{in $\Omega$},
\end{displaymath}
with $\sigma\in L^{\infty}_+(\Omega)$ for $n=2$ or $\sigma$ Lipschitz continuous for $n=3$. If $u$ is constant in $B$, then $u$ is constant in $\Omega$.
\end{thm}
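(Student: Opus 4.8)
The statement is the classical weak unique continuation property, and the plan is to reduce it to a purely local vanishing statement and then propagate across $\Omega$ using connectedness. First I would normalize the problem: letting $c$ denote the constant value of $u$ on $B$ and setting $v=u-c$, the function $v\in H^1(\Omega)$ solves the same equation $\nabla\cdot(\sigma\nabla v)=0$ and vanishes on $B$, so the goal becomes $v\equiv 0$ on $\Omega$. I would then introduce $A=\set{x\in\Omega:\ v\equiv 0 \text{ in a neighbourhood of } x}$, which is open by construction and nonempty because $B\subset A$. The assertion is equivalent to $A=\Omega$, and since $\Omega$ is connected it suffices to prove that $A$ is closed in $\Omega$.

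Closedness of $A$ rests on the following local unique continuation fact: a solution of $\nabla\cdot(\sigma\nabla v)=0$ on a ball that vanishes on some nonempty open subset vanishes on the whole ball. Granting this, if $x_0\in\overline A\cap\Omega$ I would pick a ball $B_\rho(x_0)\subset\Omega$, note that it contains a point of $A$ and hence an open subset on which $v\equiv 0$, and conclude $v\equiv 0$ on all of $B_\rho(x_0)$, so $x_0\in A$. The whole difficulty is therefore concentrated in this local fact, and the two regularity regimes in the hypotheses correspond to two genuinely different ways of establishing it.

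For $n=2$ with merely $\sigma\in L^{\infty}_+(\Omega)$ I would exploit the special planar structure. Introducing the stream function $\tilde v$ through $\partial_1\tilde v=-\sigma\,\partial_2 v$ and $\partial_2\tilde v=\sigma\,\partial_1 v$ (a well-defined closed vector field precisely because $v$ solves the equation) and forming $f=v+i\tilde v$, one checks that $f$ satisfies a Beltrami equation whose dilatation is bounded away from $1$ in terms of $\essinf\sigma$ and $\esssup\sigma$; thus $f$ is quasiregular. By the Stoilow factorization $f=H\circ\chi$ with $H$ holomorphic and $\chi$ a quasiconformal homeomorphism, so $\nabla v$ can vanish only on a discrete set unless $v$ is constant. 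Since $\nabla v\equiv 0$ on the open subset where $v$ vanishes, $H$ is constant and hence so is $v$, which is exactly the local fact.

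For $n=3$ with $\sigma$ Lipschitz the planar trick is unavailable, and I would instead derive the local fact from the strong unique continuation property, proved via a Carleman estimate or, equivalently, via the almost-monotonicity of Almgren's frequency function $N(r)=r\int_{B_r}\sigma\abs{\nabla v}^2\big/\int_{\partial B_r}\sigma v^2$. The Lipschitz regularity of $\sigma$ is exactly what is needed to control the error terms arising when differentiating $N(r)$, yielding a doubling inequality $\int_{B_{2r}}v^2\le C\int_{B_r}v^2$; this forbids infinite-order vanishing of a nontrivial solution and hence gives strong, a fortiori weak, unique continuation. I expect this Carleman/frequency estimate to be the main obstacle: it is the genuinely hard analytic ingredient, and it is also where the hypotheses are sharp, since for rough coefficients in dimension $n\ge 3$ unique continuation may fail (Pli\'s, Miller), so no softer argument can dispense with the regularity assumption.
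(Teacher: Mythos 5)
Your proposal is correct in outline, but there is nothing in the paper to compare it against: the paper does not prove this theorem at all. It is stated as a quoted result, with the two citations (Alessandrini for the two-dimensional $L^{\infty}$ case, Garofalo--Lin for the Lipschitz case in higher dimension) standing in for the proof, since the only role of the theorem in the paper is to conclude that $\ker\{\Lambda_D-\Lambda_{bg}\}=\{0\}$ and thereby motivate the shift from exact kernel elements to eigenfunctions with small eigenvalues. What you have done is reconstruct, essentially faithfully, the proof architecture that lives inside those two references: the reduction by connectedness to a local vanishing statement is the standard propagation argument and is carried out correctly (the set $A$ of points near which $v$ vanishes is open, nonempty, and closed in $\Omega$ granted the local fact); the stream-function/Beltrami/Stoilow route is exactly how the planar $L^{\infty}_+$ case is handled in the literature, and the Almgren frequency function with its almost-monotonicity under Lipschitz coefficients, yielding a doubling inequality and hence strong (a fortiori weak) unique continuation, is the Garofalo--Lin argument; your remark on sharpness (Pli\'s/Miller counterexamples for H\"older coefficients when $n\geq 3$) correctly explains why the hypotheses split into two regularity regimes. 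The honest caveat is that your proof, like any reasonable treatment at this length, is a sketch that defers the two genuinely hard analytic ingredients --- the measurable Riemann mapping/Stoilow factorization in 2D and the Carleman/frequency estimate in 3D --- to the same deep results the paper cites; so in substance your proposal and the paper end up resting on the same foundations, the difference being that you exhibit the scaffolding while the paper points to it.
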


The UCP of Theorem~\ref{teo:ucp} prevents one from finding non-constant Dirichlet data, giving a vanishing electrical field and current density in the ball $B$.
As a consequence, the UCP combined with \eqref{eqn:ker}, gives $\ker \{ \Lambda_D-\Lambda_{\varnothing} \} = \{0\}$.
In fact, if a solution $\varphi_{\varnothing}$ corresponding to an element $g$ of $\ker \{ \Lambda_D-\Lambda_{\varnothing} \}$ is constant in a ball $B$ contained in $D$, then it must be constant on $\Omega$. Consequently, $g=\sigma_{bg} \partial_n \varphi_{\varnothing}$ vanishes on $\partial \Omega$.

This result entails a change in perspective in the imaging method. Rather than looking for a proper boundary datum $g$ such that
$\int_D \sigma_{bg}(x)\abs{\nabla \varphi_{\varnothing}(x)}^2\,dx=0,$
a boundary datum $g$ that is \lq\lq close enough\rq\rq \ to $\ker\{\Lambda_D-\Lambda_{\varnothing}\} = \{0\}$, in the sense that it makes the difference $\langle\left(\Lambda_D-\Lambda_{\varnothing}\right)g,g\rangle$ small enough, is searched.
Indeed, the following Lemma holds.
\begin{lem}\label{lem:mainl_2}
Let $\sigma_D$ be the electrical conductivity defined in \eqref{eqn:sd}, $\sigma_{bg}$ be the prescribed background conductivity, and $\{f_n\}_{n\in\mathbb{N}}\subset L^{2}_{\diamond}(\partial\Omega)$ be a sequence of boundary data on the unit ball $(\norm{f_n}=1)$. Then
\begin{equation}\label{eqn:ker_2}
	\lim_{n\to+\infty} \langle\left(\Lambda_D-\Lambda_{\varnothing}\right) f_n,f_n\rangle =  0 \Longleftrightarrow \lim_{n\to+\infty}\int_D\sigo(x) \abs{\nabla \varphi_{\varnothing}^n(x)}^2\,dx=0,
\end{equation}
where $\varphi_{\varnothing}^n$ is the solution of
\begin{equation*}
	\nabla \cdot \left(\sigma_{\varnothing}\nabla \varphi_{\varnothing}^n\right)=0\:\text{in $\Omega$} \quad \text{and} \quad \sigma_{\varnothing}\partial_n \varphi_{\varnothing}^n=f_n \:\text{on $\partial\Omega$}.
\end{equation*}
\end{lem}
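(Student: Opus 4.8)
The plan is to reduce the limiting statement to a termwise application of the two-sided bound \eqref{eqn:kmequiv} already established in the proof of Lemma~\ref{lem:mainl}. The crucial feature of that estimate is that the constants $k_l$ and $k_u$ are positive and \emph{independent} of the applied boundary data; hence the inequality chain may be applied verbatim to each element $g_n$ of the sequence, with $u_{bg}$ replaced by the corresponding background potential $u_{bg}^n$.

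First I would introduce the shorthand $a_n := \int_D \sigma_{bg}\abs{\nabla u_{bg}^n}^2\,dx$ and $b_n := \langle(\Lambda_D-\Lambda_{bg})g_n, g_n\rangle$. Applying \eqref{eqn:kmequiv} with $g = g_n$ then gives, for every $n$,
\[
k_l\, a_n \le b_n \le k_u\, a_n ,
\]
with $k_l, k_u > 0$. Both sequences are nonnegative: $a_n \ge 0$ because $\sigma_{bg}$ is positive and the integrand is a square, while $b_n \ge 0$ follows already from the left inequality together with $a_n \ge 0$ (and is consistent with $\Lambda_D - \Lambda_{bg} \ge 0$, granted by the Monotonicity Principle, Property~\ref{prop:mono}, under the standing assumption $\sigma_a^M < \sigma_{bg}^m$).

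The equivalence in \eqref{eqn:ker_2} is then a squeeze argument on these scalar sequences. If $b_n \to 0$, the left inequality yields $0 \le a_n \le b_n / k_l \to 0$, so $a_n \to 0$; conversely, if $a_n \to 0$, the right inequality yields $0 \le b_n \le k_u\, a_n \to 0$, so $b_n \to 0$. This closes both implications.

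I do not expect a genuine obstacle here: the entire content is packaged in the uniform estimate \eqref{eqn:kmequiv}, and the only thing to verify is that its constants do not depend on $n$, which is precisely the independence from the boundary data noted when \eqref{eqn:kmequiv} was derived. The sole point requiring a word of care is the complementary case $\sigma_a^m > \sigma_{bg}^M$, but, exactly as in Lemma~\ref{lem:mainl}, it is symmetric and produces the same conclusion after adjusting the signs in the constants.
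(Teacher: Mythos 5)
Your proposal is correct and coincides with the paper's own argument, which simply states that the claim follows by passing to the limit in \eqref{eqn:kmequiv}; you have merely spelled out the squeeze argument in both directions, correctly emphasizing that the constants $k_l$ and $k_u$ are independent of $g_n$, which is the one point that makes the termwise application legitimate.
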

\begin{proof}
    The claim follows by passing to the limit in \eqref{eqn:kmequiv}.
\end{proof}
\begin{rem}
The normalisation condition $\lVert f_n \rVert = 1$ has no impact in proving \eqref{eqn:ker_2}; it is introduced only to exclude trivial sequences that converge to zero or sequences that converge in $L^2(\partial\Omega)$. Regarding the latter, it is worth noting that the limit of a convergent sequence, under the assumption 
$\lim_{n\to+\infty} \langle\left(\Lambda_D-\Lambda_{\varnothing}\right) f_n,f_n\rangle =  0$,
violates the UCP.
\end{rem}
A key concept for the following is that of \emph{kernel sequence}.
\begin{defn}[Kernel Sequence]
A sequence $\{f_n\}_{n\in\mathbb{N}} \subset L^2_\diamond(\partial\Omega)$ satisfying $\lVert f_n \rVert = 1$ for all $n \in \mathbb{N}$ and
\begin{equation*}
    \lim_{n\to+\infty} \langle (\Lambda_D-\Lambda_{\varnothing})f_n,f_n\rangle = 0
\end{equation*}
is called a Kernel Sequence.
\end{defn}

\section{Characterization of the anomaly}\label{sec:pbg_eig}
The previous section shows that kernel sequences can make the power density absorbed by the anomalous region negligibly small. They consist of boundary data that approach the elements of the \lq\lq kernel\rq\rq \ of $\Lambda_D - \Lambda_{\varnothing}$, in some sense. A natural question arising from \Cref{lem:mainl_2} is whether the set appearing on the right-hand side of \eqref{eqn:ker_2} is fully characterised by kernel sequences. The following theorem provides lower bounds to the \emph{outer support} $D^*$ of the anomalous region $D$ in terms of kernel sequences.

The outer support $D^*$ of a set $D$ has been introduced in \cite{HaUl13}. Intuitively speaking,  $D^*$ is the union of $D$ and all its cavities that are not path-connected to the boundary (see Figure 2 in \cite{MPM_2025}).

\begin{thm}[Lower bound]
\label{thm_un}
    If an open set $S$, $\overline{S} \Subset \Omega$, fulfils
    \begin{equation}
    \label{eqn_seq_un}
	\forall \text{ kernel sequence } \{ f_n \}_{n\in\mathbb{N}}, \ \exists K\in\mathbb{R} \ : \ \frac{\int_S\sigo \lvert\nabla \varphi_{\varnothing}^n\rvert^2\,dx}{\langle\left(\Lambda_D-\Lambda_{\varnothing}\right) f_n,f_n\rangle}\leq K, \ 
    \end{equation}
    then $S\subseteq D^*$. In \eqref{eqn_seq_un}, $\varphi^n_{\varnothing}$ is the solution of
    \begin{equation*}
	\nabla \cdot \left(\sigma_{\varnothing}\nabla \varphi_{\varnothing}^n\right)=0\:\text{in $\Omega$} \quad \text{and} \quad \sigma_{\varnothing}\partial_n \varphi_{\varnothing}^n=f_n\:\text{on $\partial\Omega$}.
    \end{equation*}
\end{thm}

\begin{proof}
    Let $S$ be a set that (i) satisfies \eqref{eqn_seq_un} and (ii) is not contained in $D^*$, i.e., $S \backslash D^* \ne \varnothing$.
    From assumption (ii), it follows that there exists a ball $B$, with $B \subset S\setminus D^*$, and from \cite{Ge08} that there exists a sequence of localised potentials $\{ f_n \}_{n\in\mathbb{N}}\subset L^2_{\diamond}(\partial\Omega)$ such that the solutions of
    \begin{equation*}
    \nabla\cdot\left(\sigma_{\varnothing} \nabla \varphi_{\varnothing}^n \right)=0\: \text{in $\Omega$} \quad \text{and} \quad \sigma_{\varnothing} \partial_n \varphi_{\varnothing}^n = f_n \: \text{on $\partial\Omega$},
    \end{equation*}
    satisfy \cite{Ge08}
    \begin{equation}
    \label{eqn_norm_fn}
        \lim_{n \to +\infty} \frac{\int_B \sigo\lvert\nabla \varphi_{\varnothing}^n\rvert^2\,dx}{\int_D \sigo\lvert\nabla \varphi_{\varnothing}^n \rvert^2\,dx} =+\infty.
    \end{equation}
    Each element $f_n$ of the sequence can be assumed to be normalised, since it only matters that the ratio at the left-hand side of \eqref{eqn_norm_fn} approaches infinity.

    The numerators in the ratio that appears in the left-hand side of   \eqref{eqn_norm_fn} form a bounded sequence. Indeed, with reference to set $B$, it turns out that \begin{equation*}
        \int_B \sigo(x) \lvert\nabla \varphi_{\varnothing}^n(x)\rvert^2\,dx \leq k_l^{-1} \langle\left(\Lambda_B-\Lambda_{\varnothing}\right) f_n,f_n\rangle 
        \le k_l^{-1} \lambda_1^B \norm{f_n}^2=k_l^{-1}\lambda_1^B,
    \end{equation*}
    where the first inequality comes from \eqref{eqn:kmequiv} but written with reference to $B$, the second inequality comes from the min-max principle for $\lambda_1^B$, the largest eigenvalue of the compact operator $\Lambda_B-\Lambda_{\varnothing}$, and the rightmost equality exploits the normalisation condition $\norm{f_n}=1$. As a consequence, 
    \begin{equation}
    \label{eqn_pwr}
         \lim_{n\to+\infty} \int_D \sigo\lvert\nabla \varphi_{\varnothing}^n\rvert^2\,dx =0,
    \end{equation}
    since the ratio in \eqref{eqn_norm_fn} tends to $+\infty$. Moreover, the sequence of localised potentials $\{f_n\}_{n\in\mathbb{N}}$ forms a kernel sequence, as follows by combining \eqref{eqn_pwr} and the rightmost inequality of \eqref{eqn:kmequiv}:
    \begin{equation*}
        0\leq \lim_{n\to+\infty} \langle\left(\Lambda_D-\Lambda_{\varnothing}\right) f_n,f_n\rangle \leq k_u \lim_{n\to+\infty} \int_D \sigo\lvert\nabla \varphi_{\varnothing}^n\rvert^2\,dx=0.
    \end{equation*}

    On the other hand, the ratio appearing at the left-hand side of \eqref{eqn_seq_un} is unbounded; indeed
    \begin{equation*}
    \begin{split}
        \frac{\int_S\sigo \lvert\nabla \varphi_{\varnothing}^n\rvert^2\,dx}{\langle\left(\Lambda_D-\Lambda_{\varnothing}\right) f_n,f_n\rangle}
        &\geq \frac{\int_B\sigo \lvert\nabla \varphi_{\varnothing}^n\rvert^2\,dx}{\langle\left(\Lambda_D-\Lambda_{\varnothing}\right)
        f_n,f_n\rangle}\\
        &\geq k_u^{-1}\frac{\int_B\sigo \lvert\nabla \varphi_{\varnothing}^n\rvert^2\,dx}{\int_D\sigo \lvert\nabla \varphi_{\varnothing}^n\rvert^2\,dx} \to+\infty, \text{ for } n \to +\infty.
    \end{split}
    \end{equation*}
\end{proof}
\Cref{thm_un} states that the condition $S \backslash D^* \neq \varnothing$ can be revealed by a proper kernel sequence for which the ratio in \eqref{eqn_seq_un} becomes arbitrarily large. 

\begin{rem}\label{rem_d}
 The condition \eqref{eqn_seq_un} is satisfied by the set $D$, for $K=k_l^{-1}$.
\end{rem}

As highlighted in  \Cref{rem_d}, the condition \eqref{eqn_seq_un} is satisfied by the set $D$. Moreover, even the larger set $D^*$ satisfies the test condition \eqref{eqn_seq_un}, as proved in the following.

\begin{lem}\label{lem_cav_c}
    Let $D\Subset\Omega$ be an open set with a Lipschitz-continuous boundary, and containing a finite number of cavities $C_1,\dots, C_j \subset D$. Moreover, let $\partial C_i$ be made by a single connected component, for $1\leq i \leq j$. 
    Let $g$ be the applied boundary condition and let $\varphi_{\varnothing}$ and $\Jo$ be the corresponding electric scalar potential and the electric current density, respectively.  
    Then
    \begin{align*}
        | \left \langle \mu_i, \Jo \cdot \nn \right\rangle_{\partial C_i} | &\le \| \mu_i \|_{\HS{1/2}{\partial C_i}} \sqrt{\frac{\sigma_{bg}^M}{k_l}}\sqrt{\left\langle(\Lambda_D-\Lambda_{\varnothing})g,g\right\rangle}, \quad \forall\,\mu_i \in \HS{1/2}{\partial C_i} \\
        \| \gamma \left( \varphi_{\varnothing} - \overline{\varphi}_{\varnothing} \right) \|_{H^{1/2}(\partial C_i)}&\leq 
        \| \gamma \| \, \sqrt{1+C_{PW}^2} \,  \sqrt{\frac{1}{k_l\sigma_{bg}^m}}\sqrt{\left\langle(\Lambda_D-\Lambda_{\varnothing})g,g\right\rangle},
    \end{align*}
    for $1\leq i \leq j$,
    where $\gamma : u \in H^1\left(D\right) \to u|_{\partial D} \in H^{1/2}(\partial D)$ is the trace operator, $\| \gamma \|$ is its norm, and $\overline{\varphi}_{\varnothing}$ is the average value of $\varphi_{\varnothing}$ in $D$.
\end{lem}
\begin{proof}
The boundary of $D$ is given by $\partial D=(\cup_{i=1}^j\Gamma_i) \cup \Gamma_e$, where $\Gamma_i= \partial{C_i}$ and $\Gamma_e = \partial D \setminus \Gamma_i$ (see Figure \ref{fig:geo_h} for case $j=1$).
\begin{figure}[h]
    \centering
    \includegraphics[width=0.35\linewidth]{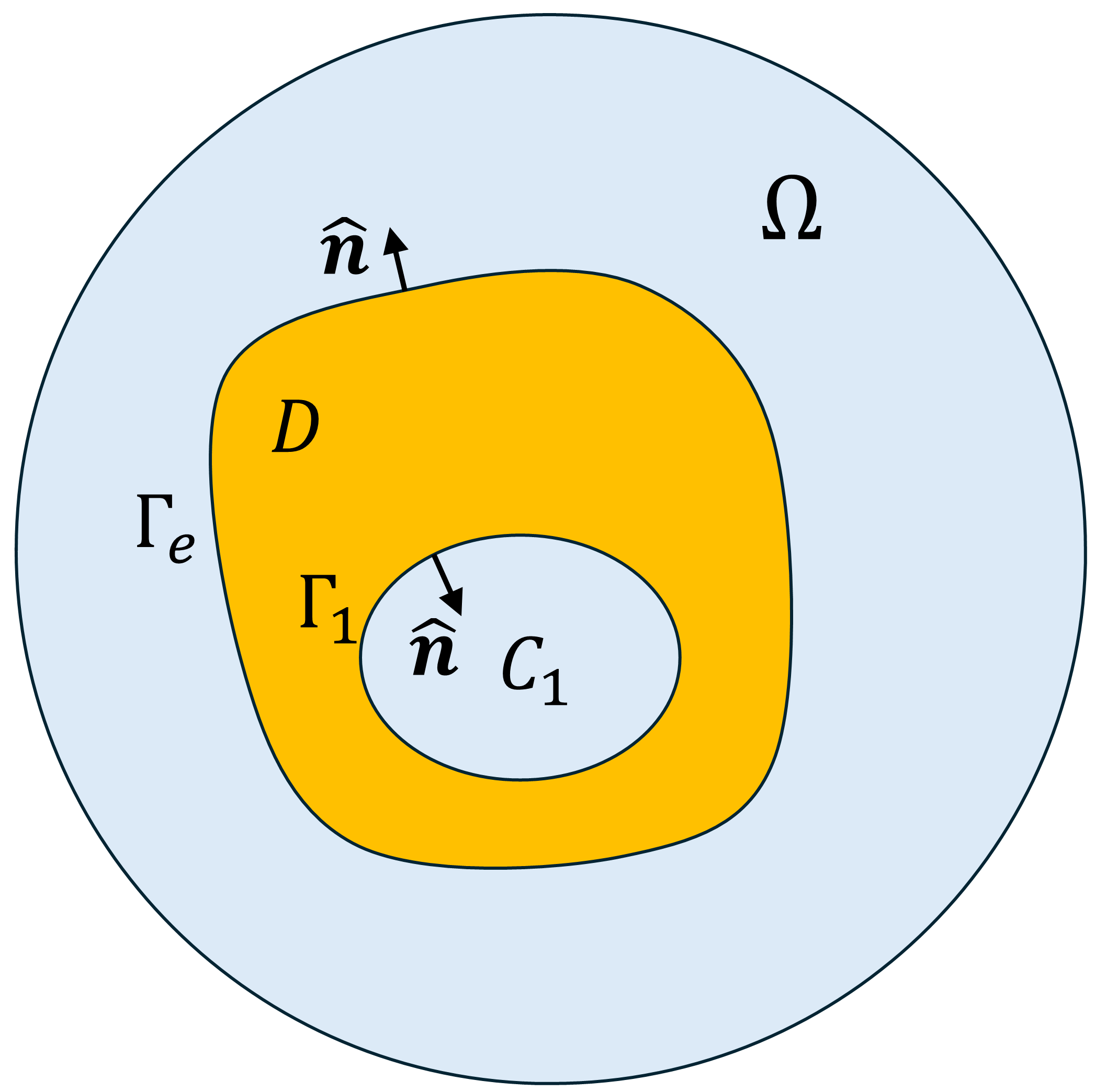}
    \caption{Anomaly $D$ and its cavity $C_1$.}
    \label{fig:geo_h}
\end{figure}

The trace operator $\gamma_n$ that gives the normal component of a div-conforming vector field $\mathbf{v}$ on $\partial D$, i.e., $\gamma_n : \mathbf{v} \in \Hdiv \to \mathbf{v} \cdot \mathbf{\hat{n}} \in H^{-1/2}(\partial D)$ is linear and bounded in $\mathscr{L}(H(div;D);H^{-1/2}(\partial D))$ \cite{girault2012}. Therefore,
\begin{equation}
\label{eq_miq}
\begin{split}
    | \left \langle \mu, \Jo \cdot \nn \right\rangle_{\partial D} | & \le \| \gamma_n \Jo \|_{H^{-1/2}(\partial D)} \| \mu \|_{\HS{1/2}{\partial D}} \\
    & \le \| \gamma_n \| \| \Jo \|_{\Hdiv} \| \mu \|_{\HS{1/2}{\partial D}} \\
    &  \le \| \Jo \|_{\Hdiv} \| \mu \|_{\HS{1/2}{\partial D}},
\end{split}
\end{equation}
where $\| \gamma_n \| \le 1$ is the norm of $\gamma_n$ in $\mathscr{L}(\Hdiv;H^{-1/2}(\partial D))$. By evaluating \eqref{eq_miq} for $\mu$ non-vanishing only on $\Gamma_i$, it follows that
\begin{equation}
\label{eqn_norm_ub}
    | \left \langle \mu_i, \Jo \cdot \nn \right\rangle_{\Gamma_i} | \le \| \mu_i \|_{\HS{1/2}{\Gamma_i}} \| \Jo \|_{\Hdiv},
\end{equation}
where $\mu_i$ is the restriction of $\mu$ to $\Gamma_i$. An upper bound to 
$\| \Jo \|_{\Hdiv}$ in \eqref{eqn_norm_ub} can be easily found by exploiting $\nabla \cdot \Jo =0$ in $D$ and the leftmost inequality in \eqref{eqn:kmequiv}, which gives:
\begin{equation}\label{eqn_norm_div}
        \frac{1}{\sigma_{bg}^M} \int_{D} \abs{\Jo}^2\,dx \le \int_{D}\frac{1}{\sigma_{bg}}\abs{\Jo}^2\,dx\leq k_l^{-1} \left\langle(\Lambda_D-\Lambda_{\varnothing})g,g\right\rangle.
\end{equation}
Therefore, the first claim follows by combining \eqref{eqn_norm_div} and \eqref{eqn_norm_ub}.

Similarly, the operator $\gamma$ that gives the trace of $\varphi_{\varnothing}$ on $\partial D$, i.e., $\gamma : u \in H^1\left(D\right) \to u|_{\partial D} \in H^{1/2}(\partial D)$, is linear and bounded in $\mathscr{L}(H^1\left(D\right);H^{1/2}(\partial D))$ \cite{girault2012}. Therefore, it follows that
\begin{equation}\label{eqn_norm_ub_2}
\begin{split}
    \| \gamma \left( \varphi_{\varnothing} - \overline{\varphi}_{\varnothing} \right) \|_{H^{1/2}(\Gamma_i)} & \le \| \gamma \left( \varphi_{\varnothing} - \overline{\varphi}_{\varnothing} \right) \|_{H^{1/2}(\partial D)}\\
    & \le \| \gamma \| \, \|  \varphi_{\varnothing} - \overline{\varphi}_{\varnothing} \|_{H^{1}(D)} \\
    & \le \| \gamma \| \, \sqrt{1+C_{PW}^2} \,  \|  \nabla \varphi_{\varnothing}  \|_{L^{2}(D)},
\end{split}
\end{equation}
where $\overline{\varphi}_{\varnothing}$ is the average value of $\varphi_{\varnothing}$ in $D$:
\begin{equation*}
    \overline{\varphi}_{\varnothing} = \frac{1}{|D|} \int_D \varphi_{\varnothing} \textrm{d}x,
\end{equation*}
$\| \gamma \|$ is the norm of the trace $\gamma$ in $\mathscr{L}(H^1\left(D\right);H^{1/2}(\partial D))$, the second inequality comes from the definition of the norm of $\gamma$, the third inequality comes from the Poincar\'{e}-Wirtinger inequality, and $C_{PW}$ is the related constant depending only on $D$. An upper bound to $\|  \nabla \varphi_{\varnothing}  \|_{L^{2}(D)}$ can be easily found by exploiting the leftmost inequality of \eqref{eqn:kmequiv} that gives
\begin{equation}\label{eqn_norm_l2}
        \sigma_{bg}^m \int_{D} \lvert\nabla \varphi_{\varnothing} \rvert^2 \,dx \le \int_{D} \sigma_{bg}\lvert\nabla \varphi_{\varnothing} \rvert^2 \,dx \leq  k_l^{-1} \left\langle(\Lambda_D-\Lambda_{\varnothing})g,g\right\rangle.
\end{equation}
Therefore, the second claim follows by combining \eqref{eqn_norm_l2} and \eqref{eqn_norm_ub_2}.
\end{proof}

\begin{rem}
    The result can be generalised with similar arguments to the case where $\partial C_i$ has multiple connected components.
\end{rem}

\begin{thm}
The interior of $D^*$ is the largest (maximal) set that satisfies condition \eqref{eqn_seq_un}.
\end{thm}
\begin{proof}
Let $D$ be strictly included in $D^*$; for example, $D$ has only one cavity that is not in contact with the boundary of $\Omega$. The boundary of $D$ is then given by $\partial D=\Gamma_1 \cup \Gamma_e$, where $\Gamma_1= \partial{C_1}$ and $\Gamma_e = \partial D \setminus \Gamma_1$ (see Figure \ref{fig:geo_h}).

The idea of the proof is the following: first, it is noted that for a kernel sequence for $D$, (i) the current density and the electric field vanish in $D$, therefore, (ii) the normal component of the electric current density vanishes on $\Gamma_i$, and (iii) the scalar potential approaches a constant value. As a consequence, the cavity $C$ is driven by vanishing boundary data, which, in turn, implies that the ohmic power in $C$ vanishes, and so is for the interior of $D^*$, i.e. $D  \cup  C$.

Let $\{f_m\}_{m\in\mathbb{N}}$ be a kernel sequence for $D$, and $\varphi_\varnothing^m$, $\Jo^m$ be the corresponding electric scalar potential and electrical current density, respectively. Furthermore, let $\overline{\varphi}_{\varnothing}$ be the average value of $\varphi_{\varnothing}$ in $D$.  A divergence theorem for the cavity $C$ gives
\begin{equation*}
\begin{split}
        \int_C \sigma_\varnothing \abs{\nabla\varphi_\varnothing^m}^2 dx = \int_C \Jo^m \cdot \nabla\varphi_\varnothing^m dx &=-\int_{\Gamma_1}\varphi_\varnothing^m \Jo^m \cdot \mathbf{\hat{n}}_1 ds \\
        &= - \left \langle \gamma \left( \varphi_{\varnothing}^m - \overline{\varphi}_{\varnothing}^m \right), \Jo^m \cdot \mathbf{\hat{n}}_1\right \rangle_{\Gamma_1},
\end{split}
\end{equation*}
where it has been exploited that $\int_{\Gamma_1} \overline{\varphi}_{\varnothing}^m \Jo^m \cdot \mathbf{\hat{n}}_1 ds=0$ since $\overline{\varphi}_{\varnothing}^m$ is a constant and $\nabla \cdot \Jo^m = 0$ in $\Omega$. Consequently, it turns out that
\begin{equation}
\begin{split}
    \int_C \sigma_\varnothing \abs{\nabla\varphi_\varnothing^m}^2 dx & \le \left| \left \langle \gamma \left( \varphi_{\varnothing}^m - \overline{\varphi}_{\varnothing}^m \right), \Jo^m \cdot \mathbf{\hat{n}}_1\right \rangle_{\Gamma_1} \right| \\
    & \le \| \gamma \left( \varphi_{\varnothing}^m - \overline{\varphi}_{\varnothing}^m \right) \|_{H^{1/2}(\Gamma_1)} \, \| \Jo^m \|_{\Hdiv} \\
    & \le \| \gamma \| \sqrt{1 + C_{PW}^2} \, \|  \nabla \varphi_{\varnothing}^m  \|_{L^{2}(D)} \, \| \Jo^m \|_{\Hdiv},
\end{split}
\end{equation}
and, by \Cref{lem_cav_c},
\begin{equation}
    \begin{split}
            \int_C \sigma_\varnothing \abs{\nabla\varphi_\varnothing^m}^2 dx & \le \frac{\alpha_D}{k_l} \left\langle(\Lambda_D-\Lambda_{\varnothing})f_m,f_m\right\rangle,
    \end{split}
\end{equation}
where
\begin{equation}
    \alpha_D = \| \gamma \| \sqrt{1 + C_{PW}^2} \, \sqrt{ \frac{ \sigma_{bg}^M}{ \sigma_{bg}^m}}.
\end{equation}

The thesis follows by noting that
\begin{equation}
\begin{split}
    \int_{D^*} \sigo(x) \lvert\nabla \varphi_{\varnothing}^m(x) \rvert^2 \, dx & =  \int_{D} \sigo(x) \lvert\nabla \varphi_{\varnothing}^m(x) \rvert^2 \, dx +  \int_{C} \sigo(x) \lvert\nabla \varphi_{\varnothing}^m(x) \rvert^2 \, dx \\
    & \le \frac{1+\alpha_D}{k_l} \left\langle(\Lambda_D-\Lambda_{\varnothing})f_m,f_m\right\rangle.
\end{split}
\end{equation}
\end{proof}

\noindent Consequently, the retrieval of $D$ using the proposed method consists of retrieving the interior of $D^*$, the largest set that satisfies condition \eqref{eqn_seq_un}.

\begin{rem}[Localized Potentials and Kernel Sequences]
    The proof of \Cref{thm_un} shows that \emph{any} sequence of localised potentials providing a vanishing ohmic power in $D$ is a kernel sequence for the same set $D$.
\end{rem}

\begin{rem}\label{rem_large}
    Testing \eqref{eqn_seq_un} is equivalent to verifying if the ratio
    \begin{equation*}
         \frac{\int_S \sigo \abs{\nabla \varphi_{\varnothing}^n}^2\,dx}{\langle (\Lambda_D-\Lambda_{\varnothing})f_n,f_n\rangle}
    \end{equation*}
    stays bounded for large $n$.
\end{rem}

\section{Eigenfunctions and Kernel Sequences}\label{sec_eig_fun}
In this Section is shown that the sequence  $\{g_n\}_{n\in\mathbb{N}}$ of the eigenfunctions of $\Lambda_D-\Lambda_{\varnothing}$ \lq\lq represents\rq\rq \ a large class of kernel sequences, i.e. testing the condition of \Cref{thm_un} on the sequence $\{g_n\}_{n\in\mathbb{N}}$ corresponds to testing the condition on a wide class of kernel sequences.

The sequence $\{g_n\}_{n\in\mathbb{N}}$ is
a natural candidate as a testing sequence for the condition~\eqref{eqn_seq_un}. Indeed, Property~\ref{prop:compact} states that the NtD operator is compact, which in turn implies that $\Lambda_D-\Lambda_{\varnothing}$ is compact as well. By the Spectral Theorem for compact operators~\cite{book:Rud91}, there exists a sequence $\{g_n\}_{n\in\mathbb{N}}\subset L^2_{\diamond}(\partial\Omega)$ of orthonormal eigenfunctions and a corresponding sequence $\{\lambda_n\}_{n\in\mathbb{N}}$ of positive eigenvalues, with $\lambda_n$ monotonically decreasing to zero.
Assuming the following normalisation for the eigenfunctions,
\begin{equation}
\label{eq_norm}
    \lvert g_n \rvert^2=\int_{\partial\Omega} g_n^2(x)\,dx=1,
\end{equation}
it follows that
\begin{equation}\label{eqn:norm}
    \langle\left(\Lambda_D-\Lambda_{\varnothing}\right) g_n,g_n\rangle
    = \left\langle \lambda_n g_n,g_n \right\rangle
    = \lambda_n.
\end{equation}
Therefore, the sequence of normalised eigenfunctions of the operator $\Lambda_D-\Lambda_{\varnothing}$ provides a kernel sequence for testing condition~\eqref{eqn_seq_un}. 

The eigenfunctions $\{g_k\}_{k\in\mathbb{N}}$ form a complete orthonormal basis of $L^2_{\diamond}(\partial\Omega)$. Hence, given a kernel sequence $\{f_n\}_{n\in\mathbb{N}}$, each element can be expanded with respect to this basis as
\begin{equation}\label{eqn_exp_fn}
    f_n = \sum_{k=1}^{+\infty} \langle f_n, g_k \rangle g_k .
\end{equation}
Since the eigenfunctions are orthonormal, the normalisation condition $\lVert f_n \rVert = 1$ implies
\begin{equation}\label{eqn_exp_norm}
    \sum_{k=1}^{+\infty} \langle f_n, g_k \rangle^2 = 1 .
\end{equation}
Moreover, taking into account \eqref{eqn_exp_fn}
\begin{equation}\label{eqn_sc_1}
    \langle (\Lambda_D - \Lambda_{\varnothing}) f_n, f_n \rangle
    = \sum_{k=1}^{+\infty} \lambda_k \langle f_n, g_k \rangle^2, 
\end{equation}
for a kernel sequence, it follows that
\begin{equation}\label{eqn_exp_sc}
    \lim_{n\to+\infty} \sum_{k=1}^{+\infty} \lambda_k \langle f_n, g_k \rangle^2=0.
\end{equation}
Equations \eqref{eqn_exp_norm} and \eqref{eqn_exp_sc} provide insight into the spectral structure of the kernel sequences. Equation \eqref{eqn_exp_norm} guarantees that, for each fixed $n$, the sequence of coefficients $\{\langle f_n, g_k \rangle\}_{k \in \mathbb{N}}$ belongs to
$\ell^2(\mathbb{R})$, implying that the coefficients decay rapidly to zero.
On the other hand, the admissibility condition \eqref{eqn_exp_sc} shows that as $\sum_{k=1}^{+\infty} \lambda_k \langle f_n, g_k \rangle^2$ approaches zero, i.e., $n$ increases, the most relevant coefficients $\langle f_n, g_k\rangle^2$ need to be
progressively shifted toward higher-order eigenfunctions, where the weights $\lambda_k$ of the sum are smaller.
From a formal point of view, the following proposition holds (see \Cref{proof_1}).
\begin{prop}\label{prop_approx}
Let $\{f_n\}_{n\in\mathbb{N}}\subset L_{\diamond}^2(\partial\Omega)$ be a kernel sequence. For a given element $f_n$ of the sequence and for any arbitrary $\varepsilon>0$, there exist scalars $\alpha_n,\beta_n\in\mathbb{N}$, depending on $n$ and $\varepsilon$, such that
\begin{equation*}
   \abs{ f_n - \sum_{k=\alpha_n}^{\beta_n} \langle f_n, g_k \rangle g_k }^2 \leq \varepsilon
\end{equation*}
\end{prop}
\Cref{prop_approx} has direct relevance from an application point of view. Indeed, it shows that each element of a kernel sequence can be accurately approximated by a linear combination of eigenfunctions with indices in the interval
$\{\alpha_n, \ldots,\beta_n \}$. This interval depends on $f_n$, the specific element of the sequence, and shifts to higher indices as $n \to +\infty$.

Moreover, the approximation can be constructed in such a way that the key ratio appearing in \eqref{eqn_seq_un} is only marginally affected. More precisely (see \Cref{proof_1}), the following result holds.

\begin{prop}\label{prop_approx_ratio}
Let $\{f_n\}_{n\in\mathbb{N}} \subset L_{\diamond}^2(\partial\Omega)$ be a kernel sequence such that
\begin{equation*}
    \lim_{n\to +\infty} \frac{\int_S\sigo \lvert\nabla \varphi_{\varnothing}^n\rvert^2\,dx}{\langle\left(\Lambda_D-\Lambda_{\varnothing}\right) f_n,f_n\rangle} = + \infty.
\end{equation*}
Then, there exists an approximating sequence $\{\widetilde{f}_n \}_{n \in \mathbb{N}}$ of $\{f_n \}_{n \in \mathbb{N}}$, such that $\{\widetilde{f}_n \}_{n\in\mathbb{N}}$ is a kernel sequence, and
\begin{equation*}
    \lim_{n\to +\infty} \frac{\int_S\sigo \lvert\nabla \widetilde{\varphi}_{\varnothing}^n\rvert^2\,dx}{\langle\left(\Lambda_D-\Lambda_{\varnothing}\right) \widetilde{f}_n,\widetilde{f}_n\rangle} = + \infty,
\end{equation*}
where $\varphi_{\varnothing}^n$ and $\widetilde{\varphi}_{\varnothing}^n$ denote the scalar potentials corresponding to the boundary data $f_n$ and $\widetilde{f}_n$, respectively.
\end{prop}

At this stage, it is convenient to introduce the \lq\lq length\rq\rq \ of an element of $L^2(\partial\Omega)$ with respect to the basis of the eigenfunctions $\{ g_n \}_{n \in \mathbb{N} }$.

\begin{defn}[Length of $f$]
    The length of $f \in L^2(\partial\Omega)$, denoted as $\ell(f)$, is the number of non-zero coefficients of $f$ along the eigenfunction basis $\{ g_n \}_{n \in \mathbb{N} }$.
\end{defn}

By combining \Cref{prop_approx} and \Cref{prop_approx_ratio}, it turns out that \emph{any} kernel sequence can be approximated by a sequence in which any element is a linear combination of a finite number of eigenfunctions $g_k$. This motivates the analysis of the ratio that appears in \eqref{eqn_seq_un} to functions $f$ made by the linear combination of a finite number of eigenfunctions, i.e., for functions of finite length $\left( \ell(f) < + \infty \right)$. 

\begin{notation}
    In the following, $u_{\varnothing}^n$ denotes the scalar potential corresponding to the electrical conductivity $\sigo$ and the boundary data $g_n$.
\end{notation}

The following lemma holds.
\begin{lem}[see \Cref{proof_2}]
\label{prop_linear}
Let $f_n \in L^2(\partial\Omega)$ with $\norm{f_n}=1$ be the linear combination of a finite number of eigenfunctions
\begin{equation*}
    f_n=\sum_{k\in \mathcal{I}_n} c_{n,k} g_k,
\end{equation*}
where $\mathcal{I}_n$ are the indices of non-zero coefficients depending on $f_n$, with $|\mathcal{I}_n|<+\infty$. 
Assuming that the set $S$ fulfils \eqref{eqn_seq_un} when tested on the eigenfunctions $g_n$, where 
$n \in \mathcal{I}_n$, i.e.
\begin{equation*}
    \frac{\int_S \sigo\lvert\nabla u_{\varnothing}^n\rvert^2\,dx}{\lambda_n}\leq K, \quad  n\in \mathcal{I}_n,
\end{equation*}
then, it follows that
\begin{equation*}
    \frac{\int_S \sigo\abs{\nabla \varphi_{\varnothing}^n}^2\,dx}{\langle (\Lambda_D-\Lambda_{\varnothing}) f_n,f_n \rangle} \leq K|\mathcal{I}_n|,
\end{equation*}  
where $\varphi_{\varnothing}^n$ denotes the scalar potentials corresponding to the boundary data $f_n$.
\end{lem}

\Cref{prop_linear} leads to the following relevant proposition.
\begin{prop}
\label{pr_key}
    Let $\{ f_n \}_{n \in \mathbb{N} } \subset L^2(\partial\Omega)$ with $\lVert f_n \rVert=1$ be a sequence in which each element is made by the linear combination of a bounded number of eigenfunctions, i.e., $\ell(f_n) \le M$. If the set $S$ fulfils \eqref{eqn_seq_un} on the sequence of the eigenfunctions, that is,
\begin{equation*}
    \frac{\int_S \sigo \lvert\nabla u_{\varnothing}^n\rvert^2\,dx}{\lambda_n}\leq K, \quad  \forall n \in \mathbb{N},
\end{equation*}
then, it follows that
\begin{equation*}
    \frac{\int_S \sigo\abs{\nabla \varphi_{\varnothing}^n}^2\,dx}{\langle (\Lambda_D-\Lambda_{\varnothing}) f_n,f_n \rangle} \leq KM.
\end{equation*}  
\end{prop}

\Cref{pr_key} means that the testing condition \eqref{eqn_seq_un} applied to the eigenfunctions $g_n$ of the operator $\Lambda_D - \Lambda_{\varnothing}$ is equivalent to the testing \eqref{eqn_seq_un} applied to \emph{any} kernel sequence where each element $f_n$ is the linear combination of at most $M$ eigenfunctions. This gives the sequence of eigenvectors $\{ g_n \}_{n \in \mathbb{N} }$ a central role in the proposed imaging method. 

Moreover, it is worth noting that
\begin{equation}
\label{eqn_lim_k}
    \lim_{n \to +\infty}
    \frac{\int_{S}\sigo\abs{\nabla u_{\varnothing}^n}^2\,dx}{\lambda_n}
    = \tilde{K} < +\infty,
\end{equation}
implies that    
\begin{equation}
\frac{\int_{S}\sigo\abs{\nabla u_{\varnothing}^n}^2\,dx}{\lambda_n}
    \leq K \quad \forall\, n \in \mathbb{N},
\end{equation}
for some proper finite constant $K$. Therefore, the sequence of eigenfunctions satisfies the testing condition \eqref{eqn_seq_un} if the latter is satisfied in the limit for $n \to +\infty$, i.e. for large $n$ in any practical setting.

In summary, if a set $S$ satisfies the testing condition \eqref{eqn_seq_un} on $g_n$ for a large enough index $n$, then the testing condition \eqref{eqn_seq_un} is automatically satisfied for all kernel sequences where each element is the linear combination of a bounded number of eigenfunctions of the operator $\Lambda_D-\Lambda_{\varnothing}$. The kernel sequences $\{ f_n \}_{n \in \mathbb{N} }$ that are not taken into account by testing \eqref{eqn_seq_un} on eigenfunctions $g_n$, for a sufficiently large index $n$, are only those where the sequence of lengths $\{ \ell(f_n) \}_{n \in \mathbb{N} }$ is not bounded.

\section{Towards the imaging method}\label{sec_ti}
The results of \Cref{sec:pbg_eig} show that an imaging method based on evaluating the power density for a known reference (background) and suitable sequences of boundary data can be established.
The outer support of the anomaly can be characterised as the \emph{largest} set that satisfies the condition \eqref{eqn_seq_un} for every kernel sequence.

However, such a characterisation poses challenges for a practical implementation, as it would require testing the condition \eqref{eqn_seq_un} for infinitely many kernel sequences. A key issue is reducing the number of sequences that must be considered. From this perspective, \Cref{sec_eig_fun} shows that the eigenfunctions of the operator $\Lambda_D-\Lambda_{\varnothing}$ play a central role. Indeed, this specific sequence encodes testing information for all kernel sequences, where each element (of the sequence) is the linear combination of a bounded number of eigenfunctions, as shown in \Cref{pr_key}.

The key role of the eigenfunctions of the operator $\Lambda_D-\Lambda_{\varnothing}$ becomes crystal clear when proper minimal assumptions are introduced, in view of the practical implementation of the method. Specifically, it can be shown that the eigenfunction sequence carries information about sequences of localised potentials capable of localising the ohmic power in \emph{any} subset external to $D^*$. This makes it unnecessary to test the condition \eqref{eqn_seq_un} for any sequence, except the eigenfunction sequence.

Within the framework of Electrical Resistance Tomography, it is known that the eigenvalues $\lambda_k$ of the compact operator $\Lambda_D-\Lambda_{\varnothing}$ decay exponentially (see, for example, \cite{Gi90,fm_3}). Moreover, a similar exponential behaviour is observed for the power density $\sigo|\nabla u_{\varnothing}^k|^2$ corresponding to the reference configuration driven by the eigenfunction $g_k$ of $\Lambda_D-\Lambda_{\varnothing}$ \cite{fm_2,fm_3,fm_5} (see also \Cref{app_C})
\begin{align}
    \ln(\lambda_k) & \sim \ln(c_{\lambda}) +  k \ln q_{\lambda}, \label{asp_lam} \\
    \ln \left(\sigo(x)\abs{\nabla u^k_{\varnothing}(x)}^2 \right) &\sim \ln (c_x)+ k \ln  p_x,  \label{asp_pbg}
\end{align}

Under assumptions \eqref{asp_lam} and \eqref{asp_pbg}, analytical sequences of localised potentials, as proposed by Gebauer in \cite{Ge08} and briefly summarised in \Cref{app_C}, are essential in understanding the key role played by the sequence of eigenfunctions $\{ g_n \}_{n \in \mathbb{N}}$. These sequences of analytically localised potentials, hereafter denoted as ALP, are kernel sequences in \emph{analytic} form that can determine whether a set $B$ is external to $D^*$. Specifically, for any arbitrary $B \subset \Omega \backslash D^*$, there exists an ALP sequence $\{ f^A_n \}_{n \in \mathbb{N}}$ with $\lVert f^A_n \rVert = 1$, and
     \begin{align*}
        \lim_{n \to + \infty} 
        \frac{\int_{B}\sigo\abs{\nabla \varphi_{\varnothing}^{n}(x)}^2\,dx}
        {\int_D \sigo\abs{\nabla \varphi_{\varnothing}^{n}(x)}^2\,dx}
        & = +\infty \\
        \lim_{n \to +\infty} \langle (\Lambda_D-\Lambda_{\varnothing}) f^A_n,f^A_n \rangle & = 0,
    \end{align*}
where $\varphi_{\varnothing}^{n}$ is the scalar potential corresponding to $f^A_n$. The key property of ALP sequences is that if a set $B$ satisfies the testing condition 
\eqref{eqn_seq_un} for the eigenfunction sequence $\{g_n \}_{n \in \mathbb{N} }$, then it satisfies \eqref{eqn_seq_un} for an arbitrary ALP sequence and, therefore, $B \subseteq D^*$. Indeed, the following proposition holds.

\begin{prop}[see Appendix C]
\label{prop_loc_eig}
    Let $B\Subset\Omega$ be such that $B\cap D^*=\varnothing$, and let $\{f_{n}^A\}_{n \in \mathbb{N} }$ be an ALP sequence for the set $B$.
    Then, under assumptions \eqref{asp_lam} and \eqref{asp_pbg}, the condition
    \begin{equation*}
        \frac{\int_B \sigo \lvert\nabla u^n_{\varnothing}\rvert^2\,dx}{\lambda_n}\leq K 
        \quad \forall \, n\in\mathbb{N}
    \end{equation*}
    implies
    \begin{equation*}
        \lim_{n \to +\infty}
        \frac{\int_B \sigo\abs{\nabla \varphi^{n}_{\varnothing}}^2\,dx}
        {\langle (\Lambda_D-\Lambda_{\varnothing}) f_{n}^A,f_{n}^A \rangle}
        \leq \tilde{K}.
    \end{equation*}
\end{prop}

Thanks to \Cref{prop_loc_eig}, it is straightforward to prove the following main result using the same methods as in \Cref{thm_un}.

\begin{thm}[Lower bound, revised]
\label{thm_un_gn}
    If a set $S \Subset \Omega$ satisfies 
    \begin{equation}\label{eqn_seq_gn}
	\frac{\int_S\sigo \lvert\nabla u^n_{\varnothing}\rvert^2\,dx}{\lambda_n}=
    \frac{\int_S\sigo \lvert\nabla u^n_{\varnothing}\rvert^2\,dx}{\langle\left(\Lambda_D-\Lambda_{\varnothing}\right) g_n,g_n\rangle}\leq K, \ \forall n \in \mathbb{N},
    \end{equation}
    then $S \subseteq D^*$.
\end{thm}
This result is very relevant from an application perspective. Indeed, it proves that the test of condition \eqref{eqn_seq_un} solely on the sequence of eigenfunctions yields a lower bound to the anomalous region $D^*$. In other terms, there is no need to test condition \eqref{eqn_seq_un} on \emph{all} kernel sequences.

Furthermore, under assumptions \eqref{asp_lam} and \eqref{asp_pbg}, the testing condition \eqref{eqn_seq_gn} can be interpreted in terms of the decay rates of the eigenvalue $\lambda_n$ and of the power density $p_{\varnothing}^n(x)=\sigma_{\varnothing}(x)\abs{\nabla u^n_{\varnothing}(x)}^2$ due to the eigenfunction $g_n$.  More precisely, the boundedness condition
\begin{equation*}
    \frac{\int_S c_xp_x^{n}\,dx}{c_{\lambda}q_{\lambda}^{n}}
    \sim\frac{\int_S \sigo(x)\lvert\nabla u^n_{\varnothing}(x)\rvert^2\,dx}{\lambda_n} \leq K
\end{equation*}
can only hold if and only if, at any point of $S$, the exponential rate of the numerator is not greater than that of the denominator, i.e.,
\begin{equation*}
    p_x \leq q_{\lambda}, \ \forall x \in S.
\end{equation*}
Therefore, $D^*$, the maximal set that satisfies the revised testing condition \eqref{thm_un_gn} is characterised as the set of points $x \in \Omega$ where $p_x \le q_{\lambda}$, that is,
\begin{equation}\label{eqn_res1}
    D^* = \{ x \in \Omega \mid p_x \le q_{\lambda} \}.
\end{equation}
Moreover, for any $x$ external to $D^*$, it follows that $p_x > q_{\lambda}$ and therefore,
\begin{equation}\label{eqn_res2}
    \lim_{n \to +\infty} \frac{p^n_{\varnothing}(x)}{\lambda_n} = + \infty.
\end{equation}

\section{Proposed method}\label{sec:main}
The analysis carried out in the previous section shows that, in a practical setting, it is sufficient to investigate the asymptotic behaviour of the power density $p_{\varnothing}^n$, for the reference configuration when driven by the eigenfunctions $g_n$. Indeed, as highlighted in \eqref{eqn_res1} and \eqref{eqn_res2}, $p_{\varnothing}^n$ exhibits a qualitatively different behaviour inside and outside $D^*$, which is the maximal set satisfying \Cref{thm_un_gn}. Therefore, it is natural to define the reconstructed region $\widetilde{D}$ as the set $D^*$, i.e., the method aims to reconstruct the outer support of the anomaly.

Moreover, from the characterisation in \eqref{eqn_res1} and \eqref{eqn_res2}, it follows that $\widetilde{D}$ is given by the points for which the ratio $p_{\varnothing}^n/\lambda_n$ stays bounded as $n\to+\infty$. When a given eigenfunction $g_n$ of $\Lambda_D-\Lambda_{bg}$, corresponding to a large but finite index $n$, is considered, a natural translation of the above characterisation is  
\begin{equation}\label{eqn:rec}
    \widetilde{D}_{\alpha}^n=\left\{ x\in \Omega \mid \frac{p_{\varnothing}^n(x)}{\lambda_n} < \alpha \right\},
\end{equation}
where $\alpha>0$ is an appropriately chosen threshold. 

In principle, determining this threshold is non-trivial. Indeed, the power density $p_{\varnothing}^n(x)$ generated by $g_n$ is a continuous function when the background conductivity $\sigma_{\varnothing}(x)$ is continuous, so no sharp gap is expected between the values of $p_{\varnothing}^n(x)/\lambda_n$ when evaluated inside and outside the anomaly. 
\begin{figure}
    \centering
    \includegraphics[width=0.35\linewidth]{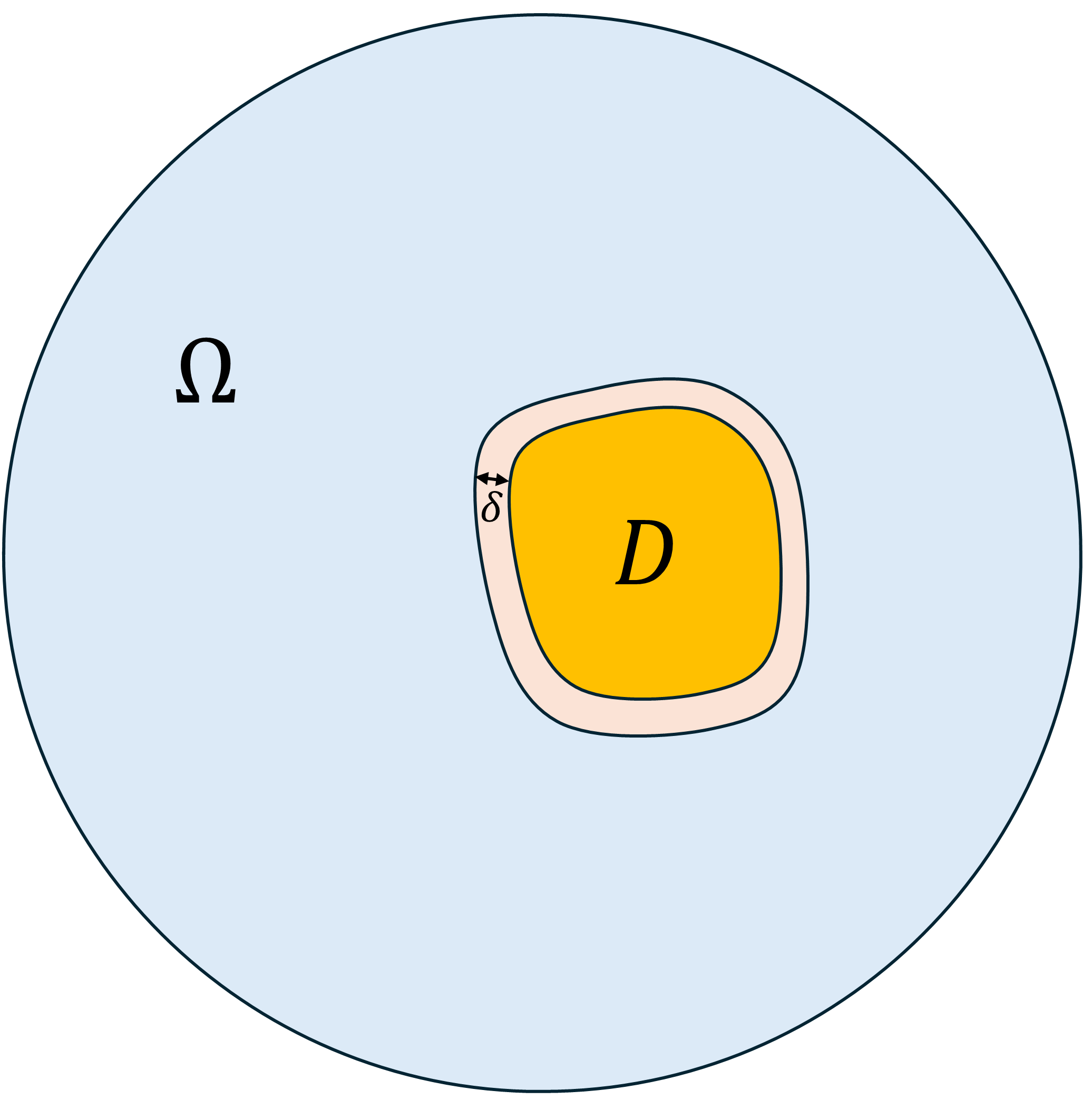}
    \caption{Anomaly $D$ embedded in a uniform background. A tubular neighbourhood of $\partial D$ is considered.}
    \label{fig_delta}
\end{figure}

To address this issue, consider the configuration illustrated in Figure~\ref{fig_delta}. Let $\delta>0$ be an arbitrary (small) positive threshold, let $x$ be a point in $D^*$ and let $y$ be a point at a distance greater than $\delta$ from $D^*$. The point $y$ lies outside the tubular neighbourhood highlighted in Figure~\ref{fig_delta}. Combining \eqref{eqn_res1} and \eqref{eqn_res2}, it follows that the decay rates at $x$ and $y$ satisfy $p_y > p_x$. In other words, any point well separated from the anomaly exhibits a strictly slower exponential decay rate of $p_{\varnothing}^n$ than any point inside the anomaly.

As a consequence, for a sufficiently large $n$, a clear separation emerges between the power density values within the anomaly and those obtained at points that are well separated from it. This asymptotic separation justifies the introduction of a threshold to distinguish the two regions. In this sense, the threshold $\alpha$ should be chosen between the values of the ratio $p_{\varnothing}^n(x)/\lambda_n$ evaluated inside the anomaly and those evaluated at points sufficiently separated from it. Since the width $\delta$ of the tubular neighbourhood can be chosen arbitrarily, it follows that the outer support of the anomaly can be reconstructed with arbitrary precision by introducing a proper threshold.

To complete the reconstruction rule \eqref{eqn:rec}, a proper strategy is needed to select the right threshold $\alpha^*$. For this purpose, it is worth noting that the power density dissipated in $D$ for the background configuration is related to the power difference $\langle (\Lambda_D-\Lambda_{\varnothing})g_n,g_n\rangle$. Indeed, from Equation \eqref{eqn:kmequiv} (see the proof of Lemma \ref{lem:mainl}), it follows that
    \begin{equation}\label{eqn:pro0}
        \frac{\sigma_a^m}{\sigma_{bg}^m-\sigma_{a}^M}\langle (\Lambda_D-\Lambda_{\varnothing}) \, g,g\rangle\leq \int_D p_{\varnothing}(x)\,dx\leq \frac{\sigma_{bg}^M}{\sigma_{bg}^m-\sigma_{a}^M}\langle (\Lambda_D-\Lambda_{\varnothing}) \, g,g\rangle,
    \end{equation}
where $p_{\varnothing}=\sigo \abs{\nabla \varphi_\varnothing}^2$.
Equation \eqref{eqn:pro0} is relevant since it allows one to compute the threshold $\alpha^*$ in \eqref{eqn:rec}. In fact, setting $g=g_n$ and accounting \eqref{eqn:norm}, it turns out that
  \begin{equation*}
        \frac{\sigma_a^m}{\sigma_{bg}^m-\sigma_{a}^M}\leq \frac{\int_D p_{\varnothing}^n(x)\,dx}{\lambda_n}\leq \frac{\sigma_{bg}^M}{\sigma_{bg}^m-\sigma_{a}^M},
    \end{equation*}
where the constants $\sigma_a^m$, $\sigma_a^M$, $\sigma_{bg}^m$, and $\sigma_{bg}^M$ are defined in \eqref{eqn_bound_bg} and \eqref{eqn_bound_a}.
Hence, the threshold ${\alpha^*}$ is found as the solution of
\begin{equation}
\label{eqn:rec_free}
    \frac{\int_{D^n_{\alpha}} p_{\varnothing}^n(x) \,dx}{\lambda_n}=\varepsilon^*,
\end{equation}
where 
\begin{equation}
\label{eqn:thr_free}
    \frac{\sigma_{a}^m}{\Delta\sigma_{G}}\le \varepsilon^* \le \frac{\sigma_{bg}^M}{\Delta\sigma_{G}},
\end{equation}
and $\Delta\sigma_{G}=\sigma_{bg}^m-\sigma_{a}^M$.

\begin{rem}
    It is worth noting that the left-hand side of \eqref{eqn:rec_free} is monotonic w.r.t. $\alpha$. Therefore, equation \eqref{eqn:rec_free}
 admits a unique solution in $\alpha$.
\end{rem}

\begin{rem}
    From an equivalent perspective, the approximation of the anomaly boundary is obtained by selecting the level curve of $p_{\varnothing}^n$ at level $\alpha^* \lambda_n$. for a prescribed $n$. An example is provided in Figure \ref{fig:ordkm}, which shows the level curves of $p_{\varnothing}^n$ for different values of $n$. As $n$ increases, the shape of these curves progressively approaches $\partial D$.
    \begin{figure}
        \centering
        \subfloat[][]
        {\includegraphics[width=.33\textwidth]{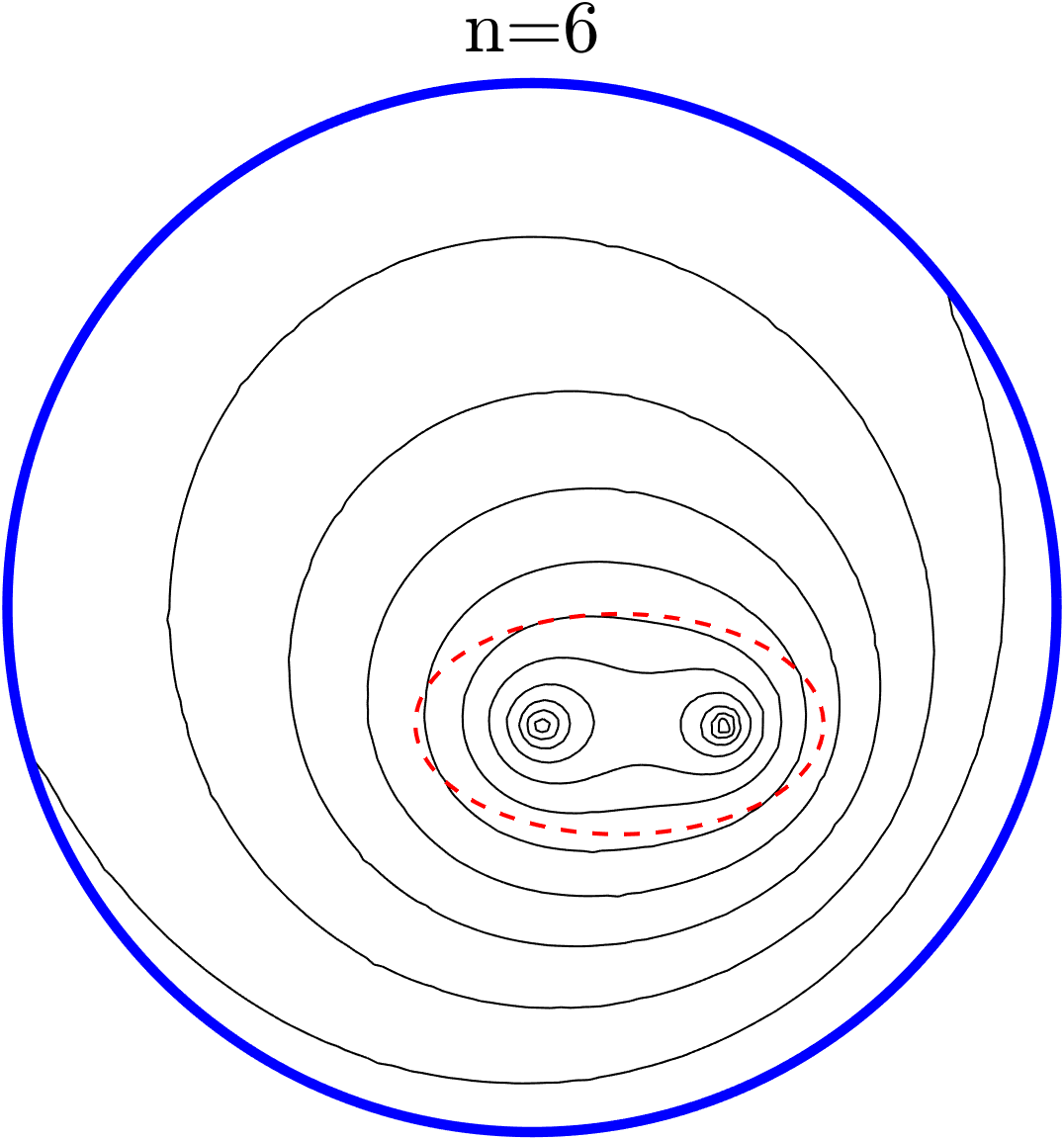}} \quad
        \subfloat[][]
        {\includegraphics[width=.33\textwidth]{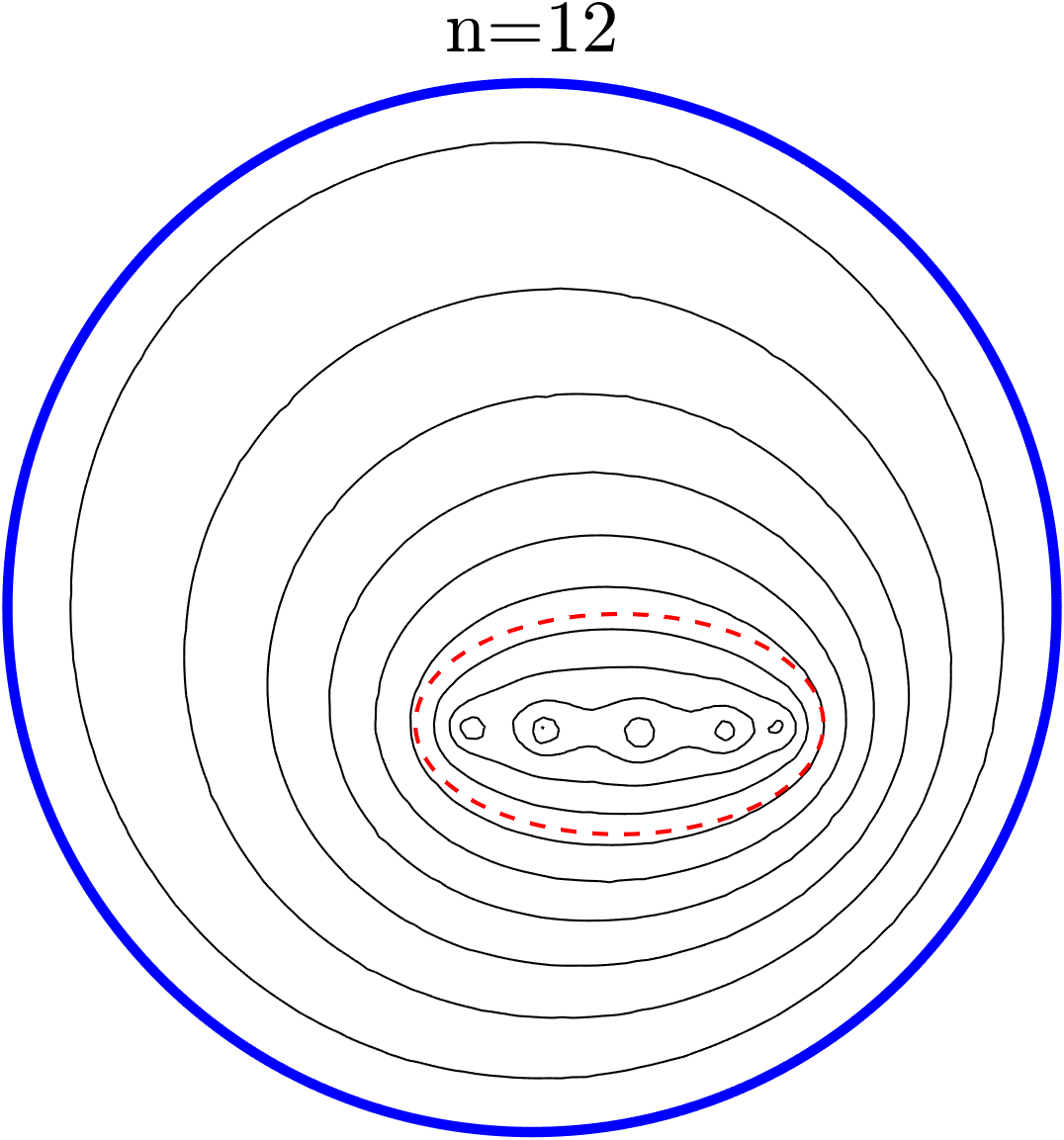}} \\
        \subfloat[][]
        {\includegraphics[width=.33\textwidth]{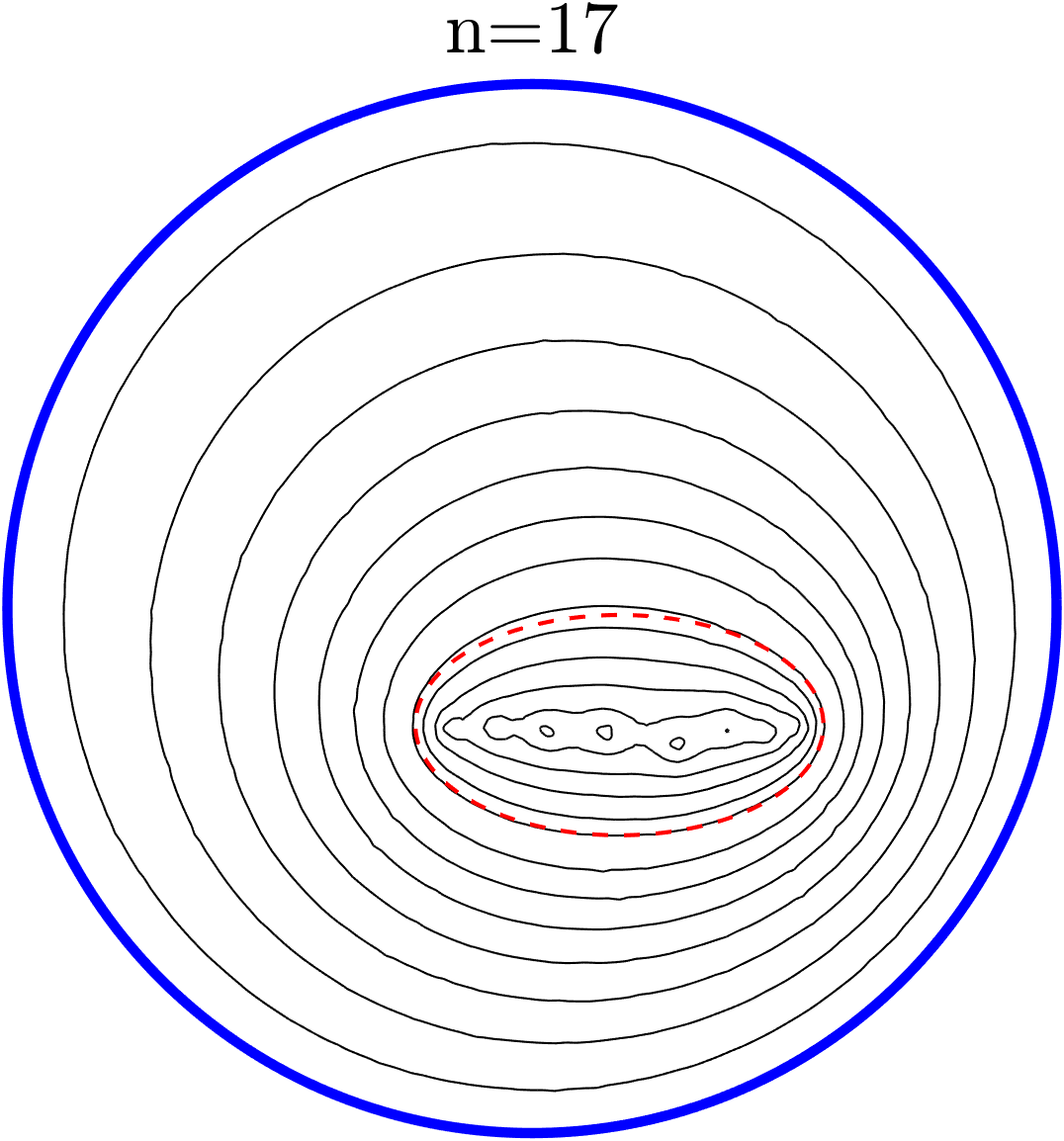}} \quad
        \subfloat[][]
        {\includegraphics[width=.33\textwidth]{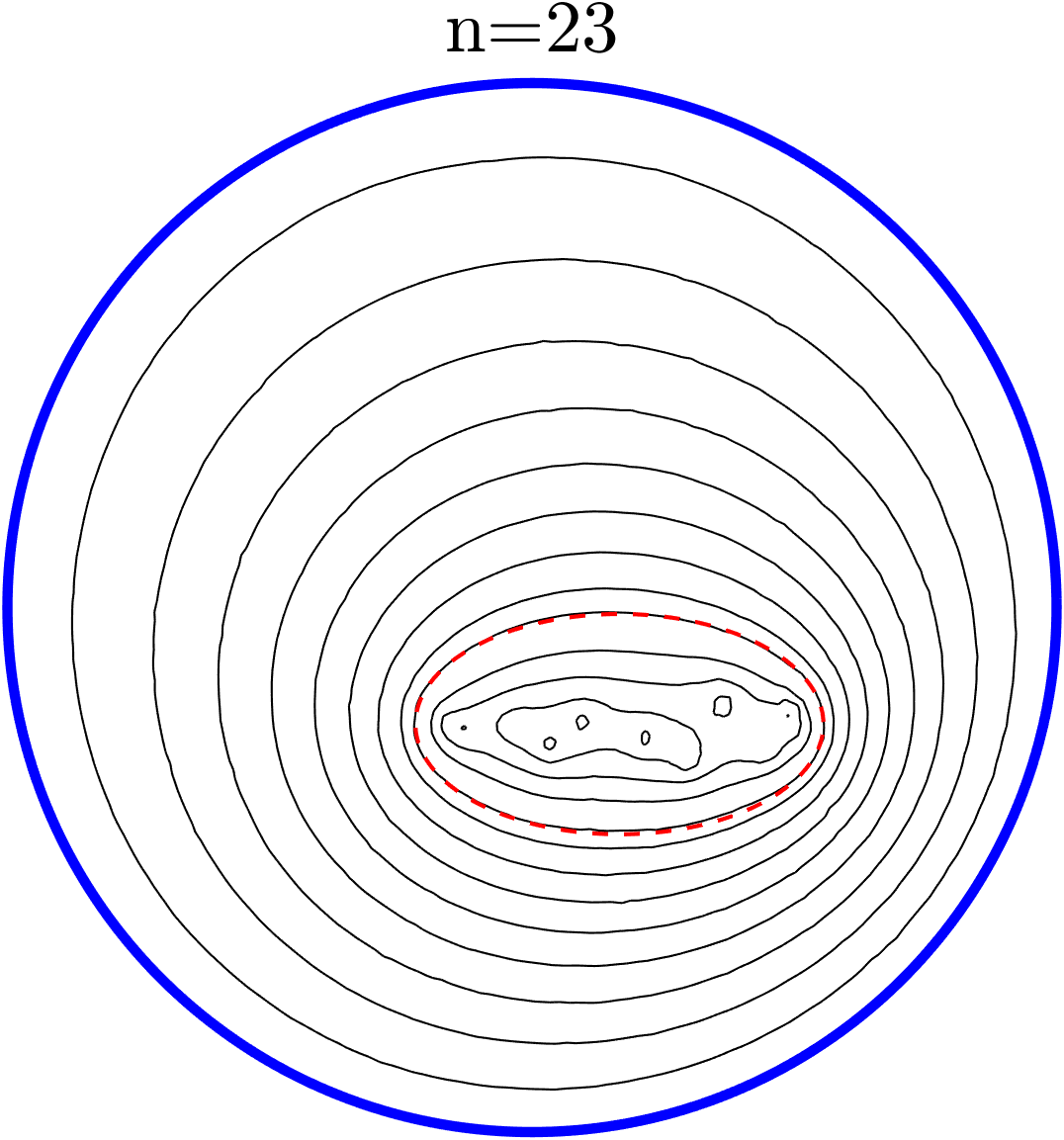}} 
        \caption{Shape of the level curves varying the order of the eigenfunctions.}
        \label{fig:ordkm}
    \end{figure}
\end{rem}

In summary, the proposed imaging method consists of:
\begin{itemize}
\item compute experimentally or numerically $\Lambda_{\varnothing}$;
\item measure the operator $\Lambda_D$;
\item find the eigenvalues $\lambda_n$ and the eigenfunctions $g_n$ of $\Lambda_D-\Lambda_{\varnothing}$, with the normalisation condition of \eqref{eq_norm};
\item solve problem~\eqref{eqn:dirprob} when the conductivity is $\sigma_{\varnothing}$ and the applied current flux at the boundary is $g_n$, for a sufficiently small eigenvalue $\lambda_n$;
\item the reconstructed anomaly is given by $\widetilde{D}^n_{\alpha^*}$, where $\alpha^*$ is the unique solution of \eqref{eqn:rec_free}.

\end{itemize}

\begin{rem}
    The methodology introduced in this section decouples the retrieval of the anomalous region into two different problems: (i) the determination of the shape and (ii) the determination of a proper threshold on the power density. Information about the shape of the anomaly is contained in the level curves of $p_{\varnothing}^n(x)$ (see Figure \ref{fig:ordkm}), while the threshold to be applied to $p_{\varnothing}^n(x)$ depends on the eigenvalues and the bounds to the material properties (see \eqref{eqn:rec_free}).
\end{rem}

\section{Treatment of the noise}\label{sec:noise}
In any practical application, only a noisy version of the NtD map is available. Specifically, noisy measurements are supposed to be modelled as 
\begin{equation}\label{eqn:noise}
\Delta \widetilde{\Lambda}=\Delta\Lambda+N,
\end{equation}
where $\Delta\widetilde{\Lambda}$ is the noisy version of the noise free difference operator $\Delta\Lambda=\Lambda_D-\Lambda_{\varnothing}$, and $N$ is the operator that represents the noise.

Hereafter, it is assumed that $N$ is a bounded operator, that is, $\norm{N} < + \infty$, where:
\begin{equation*}
    \norm{N}=\sup_{f\in L^2_{\diamond}(\partial\Omega)}\frac{\norm{Nf}_2}{\norm{f}_2}
\end{equation*}
Moreover, the operator $N$ can always be considered self-adjoint, taking the symmetric part of $\Delta\widetilde{\Lambda}$. Under these assumptions, the spectrum of $\Delta\widetilde{\Lambda}$ can be related to the spectrum of $\Delta \Lambda$ through the following proposition, which is an application of standard arguments in perturbation theory \cite{Ka95}.
\begin{prop}\label{prop:pert}
    Let $\Delta\widetilde{\Lambda}$ as defined in~\eqref{eqn:noise}, where $N$ is a bounded and self-adjoint operator, and $\delta=\norm{N}$. Let $\lambda_k$, $\widetilde{\lambda}_k$ be the eigenvalues of $\Lambda_D-\Lambda_{\varnothing}$ and $\Delta\widetilde{\Lambda}$, respectively, ordered in decreasing order. Then
    \begin{equation*}
        \lvert \widetilde{\lambda}_k - \lambda_k \rvert \leq \delta
    \end{equation*}
\end{prop}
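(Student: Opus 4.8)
The plan is to derive the bound from the Courant--Fischer min-max characterization of the spectrum of a bounded self-adjoint operator, which reduces the statement to a Weyl-type perturbation inequality. First I would observe that $\Delta\Lambda = \Lambda_D - \Lambda_{bg}$ is self-adjoint, being the difference of the self-adjoint operators $\Lambda_D$ and $\Lambda_{bg}$ (Property~\ref{prop:compact}), and that by hypothesis $N$ is bounded and self-adjoint; hence $\Delta\widetilde{\Lambda} = \Delta\Lambda + N$ is again a bounded self-adjoint operator on $L^2_{\diamond}(\partial\Omega)$. This is precisely what makes the variational machinery applicable.

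Next I would recall that for any bounded self-adjoint operator $A$ on a Hilbert space $H$ the ordered spectral values admit the representation
\begin{equation*}
\mu_k(A) = \max_{\substack{V \subseteq H \\ \dim V = k}} \; \min_{\substack{f \in V \\ \norm{f}_2 = 1}} \langle A f, f \rangle,
\end{equation*}
and that for a compact self-adjoint operator these $\mu_k$ coincide with the eigenvalues listed in decreasing order. Under this convention $\lambda_k = \mu_k(\Delta\Lambda)$ and $\widetilde{\lambda}_k = \mu_k(\Delta\widetilde{\Lambda})$, so it suffices to bound $\abs{\mu_k(\Delta\widetilde{\Lambda}) - \mu_k(\Delta\Lambda)}$.

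The core estimate is the elementary pointwise bound: for every $f$ with $\norm{f}_2 = 1$ one has $\abs{\langle N f, f\rangle} \le \norm{N} = \delta$, whence
\begin{equation*}
\langle \Delta\widetilde{\Lambda} f, f \rangle = \langle \Delta\Lambda f, f\rangle + \langle N f, f\rangle \le \langle \Delta\Lambda f, f\rangle + \delta .
\end{equation*}
Inserting this into the min-max formula --- taking the minimum over unit vectors in a $k$-dimensional subspace $V$ and then the maximum over all such $V$ --- yields $\mu_k(\Delta\widetilde{\Lambda}) \le \mu_k(\Delta\Lambda) + \delta$. Exchanging the roles of $\Delta\Lambda$ and $\Delta\widetilde{\Lambda}$ (equivalently, replacing $N$ by $-N$, which has the same norm) gives the reverse inequality, and the two together furnish $\abs{\widetilde{\lambda}_k - \lambda_k} \le \delta$.

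The step requiring the most care is the justification that the quantities $\widetilde{\lambda}_k$ may legitimately be treated through the variational values $\mu_k(\Delta\widetilde{\Lambda})$: since $N$ is only assumed bounded, $\Delta\widetilde{\Lambda}$ need not be compact, so its spectrum may contain an essential part and the relevant objects are the min-max values lying above the essential spectrum. The Weyl-type inequality above, however, holds verbatim for the values $\mu_k$ of any two bounded self-adjoint operators, with no compactness assumption invoked, so the conclusion is unaffected; the only point to state cleanly is the convention identifying the ordered eigenvalues with the min-max values.
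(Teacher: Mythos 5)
Your proof is correct and coincides with the paper's approach: the paper gives no written proof at all, stating only that the proposition ``is an application of standard arguments in perturbation theory \cite{Ka95}'', and the standard argument in question is precisely the Weyl perturbation inequality via the Courant--Fischer min-max characterization that you write out. Your closing remark on the essential spectrum of $\Delta\widetilde{\Lambda}$ (which need not be compact since $N$ is merely bounded) addresses a point the paper glosses over, and is handled correctly by interpreting the ordered eigenvalues as min-max values.
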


The Proposition~\ref{prop:pert} allows one to quantitatively evaluate the impact of noise on the eigenvalues of the key operator $\Lambda_D-\Lambda_{\varnothing}$. Specifically, given the noise level $\delta$ as the norm of the noise operator $N$, the eigenvalues of $\Delta\widetilde{\Lambda}$ differ at most by $\delta$ from the corresponding eigenvalues of the noise-free operator $\Lambda_D-\Lambda_{\varnothing}$. Therefore, all eigenvalue/eigenfunction pairs with the eigenvalue smaller than $\delta$ are not reliable, because they are corrupted by noise. 
\begin{figure}[tbhp]
\centering
\includegraphics[width=0.7\textwidth]{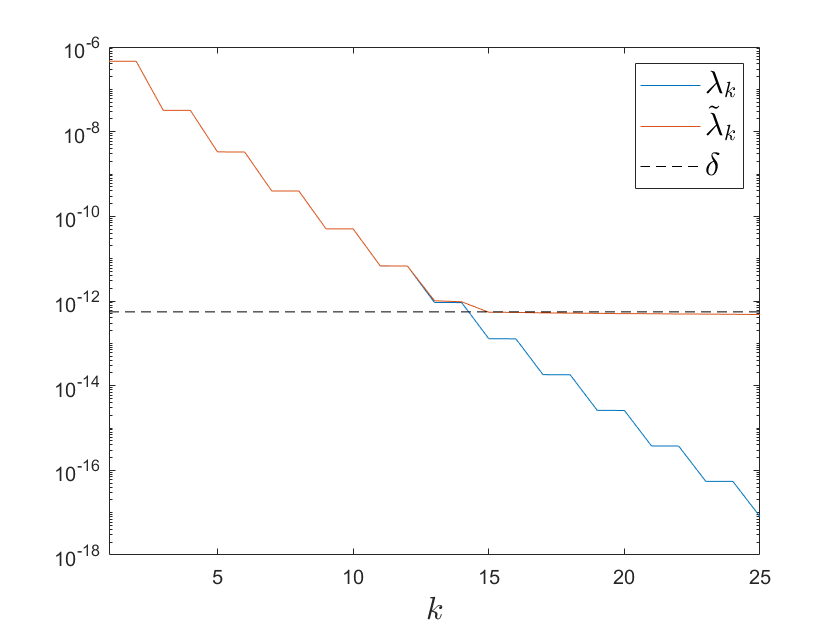}
\caption{Perturbation of the eigenvalues of the key operator $\Delta\Lambda$. The domain $\Omega$ is a circle centred in the origin with radius \qty{10}{\cm}, while the anomaly $D$ is a circle centred in the origin and radius \qty{4}{\cm}. The electrical conductivities are $\sigma_{bg}=\qty{200}{\siemens.\m}$ and $\sigma_a=\SI{1}{\siemens.\m}$. The norm of the operator $N$ is $\delta$.}
\label{fig:eign}
\end{figure}
An example is shown in Figure~\ref{fig:eign}, where the eigenvalues of the perturbed operator plateau when their amplitude is of the order of $\delta$. It is remarkable how they differ from that of the noise-free operator $\Delta \Lambda$.

As highlighted in the previous sections, the eigenfunction $g_n$ is chosen to get $\langle \Delta \Lambda g_n, g_n \rangle = \lambda_n \lVert g_n \rVert^2$ as close to zero as possible, to mimic an element of the (void) kernel of the operator $\Lambda_D-\Lambda_{\varnothing}$. In the presence of noisy data, this results in the eigenfunction $\widetilde{g}_n$ corresponding to the smallest eigenvalue $\widetilde{\lambda}_n$ above the noise level $\delta$. The latter can be easily identified due to the presence of the plateau that appears about $\delta$.

The remainder of this section is devoted to the choice of the threshold $\alpha$ appearing in the reconstruction rule of \eqref{eqn:rec}, in the presence of noisy data. 

Preliminarily, the following proposition is proven, which establishes the relationship between the measured data, i.e., $\Delta\widetilde{\Lambda}$, and the ohmic power absorbed by the unknown anomalous region. 
\begin{prop}\label{prop:sog1}
    Let $\Delta \widetilde{\Lambda}$ be the noisy data defined in \eqref{eqn:noise}, where $\Delta \Lambda=\Lambda_D-\Lambda_{\varnothing}$ is the noise-free operator and $N$ is a bounded noise operator with $\lVert N \rVert \leq \delta$. Let $g$ be the boundary condition applied to the reference configuration, then
    
    \begin{equation}\label{eqn:in1}
        \frac{\sigma_{a}^m}{\Delta \sigma}\left(\langle \Delta \widetilde{\Lambda} \, g, g \rangle - \delta\lVert g\rVert^2 \right)\leq\int_{D}\sigo(x)\abs{\nabla \varphi_{\varnothing}(x)}^2\,dx \leq \frac{\sigma_{bg}^M}{\Delta \sigma}\left(\langle \Delta \widetilde{\Lambda} \, g, g \rangle + \delta\lVert g\rVert^2 \right),
    \end{equation}
    where $D$ is the unknown anomalous region and $\Delta\sigma=\sigma_{bg}^m-\sigma_{a}^M$.
\end{prop}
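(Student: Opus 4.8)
The plan is to derive \eqref{eqn:in1} in two moves: first bound $\int_D\sigma_{bg}\abs{\nabla u_{bg}}^2$ by the \emph{noise-free} bilinear form $\langle\Delta\Lambda\,g,g\rangle$, and then trade $\Delta\Lambda$ for the measured $\Delta\widetilde{\Lambda}$ at the cost of the terms $\pm\delta\lVert g\rVert^2$. The first move requires essentially no new analysis, since the two-sided estimate \eqref{eqn:kmequiv} established inside the proof of Lemma \ref{lem:mainl} already reads $k_l\int_D\sigma_{bg}\abs{\nabla u_{bg}}^2\,dx\le\langle\Delta\Lambda\,g,g\rangle\le k_u\int_D\sigma_{bg}\abs{\nabla u_{bg}}^2\,dx$, with $k_l,k_u>0$ guaranteed by the separation hypothesis $\sigma_a^M<\sigma_{bg}^m$. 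Dividing through gives
\[
\frac{1}{k_u}\,\langle\Delta\Lambda\,g,g\rangle\;\le\;\int_D\sigma_{bg}\abs{\nabla u_{bg}}^2\,dx\;\le\;\frac{1}{k_l}\,\langle\Delta\Lambda\,g,g\rangle,
\]
and one checks that $1/k_l=\sigma_{bg}^M/\Delta\sigma$ reproduces the outer constant of the upper bound, while $1/k_u$ supplies the constant on the lower side.

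For the second move I would invoke the noise model \eqref{eqn:noise} to write $\langle\Delta\Lambda\,g,g\rangle=\langle\Delta\widetilde{\Lambda}\,g,g\rangle-\langle N g,g\rangle$. Since $N$ is taken self-adjoint with $\norm{N}\le\delta$, the Cauchy--Schwarz inequality yields $\abs{\langle N g,g\rangle}\le\delta\lVert g\rVert^2$, hence
\[
\langle\Delta\widetilde{\Lambda}\,g,g\rangle-\delta\lVert g\rVert^2\;\le\;\langle\Delta\Lambda\,g,g\rangle\;\le\;\langle\Delta\widetilde{\Lambda}\,g,g\rangle+\delta\lVert g\rVert^2.
\]
Substituting the upper side of this chain into the upper estimate of the first move (whose multiplicative constant is positive), and the lower side into the lower estimate, produces exactly the two inequalities of \eqref{eqn:in1}.

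The main obstacle is bookkeeping rather than deep analysis: I must keep the inequality orientations consistent when the two two-sided bounds are composed, checking that multiplication by the positive constants $1/k_l$ and $1/k_u$ preserves directions, and that the lower bound remains valid even in the regime where $\langle\Delta\widetilde{\Lambda}\,g,g\rangle-\delta\lVert g\rVert^2$ is negative (there it holds automatically, since the integral on the right is nonnegative). A secondary point deserving care is the precise identification of the multiplicative constants: matching the lower-bound coefficient to the $\Delta\sigma$ written in the statement requires tracking which extreme values of $\sigma_{bg}$ and $\sigma_a$ enter when the ratio $(\sigma_{bg}-\sigma_a)/\sigma_a$ is bounded on $D$, and it is exactly here that the explicit form of $k_u$ coming from Lemma \ref{lem:estimate} must be used.
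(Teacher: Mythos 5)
Your strategy coincides with the paper's own proof: the paper likewise starts from \eqref{eqn:kmequiv}, inverts it into a two-sided bound on $\int_D\sigma_{bg}\abs{\nabla u_{bg}}^2\,dx$ in terms of $\langle\Delta\Lambda\,g,g\rangle$ (its equation \eqref{eqn:pro0}), and then trades $\langle\Delta\Lambda\,g,g\rangle$ for $\langle\Delta\widetilde{\Lambda}\,g,g\rangle\mp\delta\lVert g\rVert^2$ exactly as you do, via $\abs{\langle Ng,g\rangle}\le\lVert N\rVert\,\lVert g\rVert^2\le\delta\lVert g\rVert^2$. Your noise step and your sign bookkeeping (including the observation that the lower bound is harmless when its left side is negative, since the constants are positive) are correct.

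However, the ``check'' you postpone to your final paragraph is precisely where the argument does not close, and it cannot be closed along the route you describe. Inverting \eqref{eqn:kmequiv} gives the lower-bound constant
\[
\frac{1}{k_u}=\frac{\sigma_a^m}{\sigma_{bg}^M-\sigma_a^m},
\]
whereas the statement asserts the constant $\sigma_a^m/\Delta\sigma_G$ with $\Delta\sigma_G=\sigma_{bg}^m-\sigma_a^M$. Since $\sigma_{bg}^M-\sigma_a^m\ge\sigma_{bg}^m-\sigma_a^M$, one has $1/k_u\le\sigma_a^m/\Delta\sigma_G$, with equality only when $\sigma_{bg}^M=\sigma_{bg}^m$ and $\sigma_a^M=\sigma_a^m$, i.e.\ when both conductivities are homogeneous. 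So what your argument actually proves is the lower bound of \eqref{eqn:in1} with the smaller constant $\sigma_a^m/(\sigma_{bg}^M-\sigma_a^m)$; no re-tracking of extreme values in the pointwise bound of $(\sigma_{bg}-\sigma_a)/\sigma_a$ over $D$ can do better, because $(\sigma_{bg}(x)-\sigma_a(x))/\sigma_a(x)\le(\sigma_{bg}^M-\sigma_a^m)/\sigma_a^m$ is already the best possible uniform estimate. You should therefore either state the conclusion with that weaker constant or add the hypothesis of homogeneous $\sigma_{bg}$ and $\sigma_a$ (as in all of the paper's examples), under which the two constants coincide and your proof is complete. For what it is worth, the paper's own proof glosses over the same point: it declares that \eqref{eqn:pro0} ``follows'' from \eqref{eqn:kmequiv} even though the lower constants do not match in the inhomogeneous case, so your proposal reproduces the published argument together with its one genuine soft spot.
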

\begin{proof}
    The noise-free data can be related to the measured data due to \eqref{eqn:noise}, leading to $\langle \Delta \Lambda \, g,g\rangle=\langle \Delta \widetilde{\Lambda} \, g,g\rangle-\langle N \, g,g\rangle$. Moreover, recalling the definition of operator norm \cite[Ch.1 Par. 4.1]{Ka95}
    \begin{equation*}
        -\delta \lVert g \rVert^2 \leq \langle N g,g \rangle \leq \delta \lVert g \rVert^2,
    \end{equation*}
    hence,
    \begin{equation}\label{eqn:pro1}
        \langle \Delta\widetilde{\Lambda} \, g,g\rangle-\delta\lVert g\rVert^2\leq\langle \Delta\Lambda \, g,g\rangle\leq\langle \Delta\widetilde{\Lambda} \, g,g\rangle+\delta\lVert g\rVert^2.
    \end{equation}
    By combining \eqref{eqn:pro0} and \eqref{eqn:pro1}, the claim follows.    
\end{proof}

Proposition \ref{prop:sog1} gives an estimate for the ohmic power absorbed by the anomaly based only on available quantities: the measured data $\Delta \widetilde{\Lambda}$, the noise level $\delta$ and the upper and lower bounds $\sigma_a^m$, $\sigma_a^M$, $\sigma_{bg}^m$ and $\sigma_{bg}^M$ to the electrical conductivities. The proposition \ref{prop:sog1} sets a constraint in the sense that a proper reconstruction method has to provide a reconstruction of the anomalous region $D$ satisfying \eqref{eqn:in1}.

In the Kernel Method, the anomalous region is evaluated by applying a proper eigenfunction $g_n$ of the difference operator $\Delta\Lambda$ as boundary data to the reference configuration. In the presence of noise, only the eigenfunctions $\widetilde{g}_n$ of the noisy operator $\Delta\widetilde{\Lambda}$ can be computed. By specialising Proposition \ref{prop:sog1} for $g=\widetilde{g}_n$, it holds
\begin{equation}\label{eqn:sogp1}
        \frac{\sigma_{a}^m}{\Delta\sigma_{G}}\left(\widetilde{\lambda}_n - \delta\right)\lVert\widetilde{g}_n\rVert^2\leq\int_{D} \widetilde{p}_{\varnothing}^n(x)\,dx \leq \frac{\sigma_{bg}^M}{\Delta\sigma_{G}}\left(\widetilde{\lambda}_n + \delta \right)\lVert\widetilde{g}_n\rVert^2,
\end{equation}
where $ \widetilde{p}_{\varnothing}^n(x)=\sigo(x)\abs{\nabla \widetilde{u}_{\varnothing}^{n}(x)}^2$ and $\widetilde{u}_{\varnothing}^{n}$ is the scalar potential evaluated for the reference configuration and the boundary data $\widetilde{g}_n$.

Equation \eqref{eqn:sogp1} suggests the introduction of a threshold on the ohmic power absorbed by the anomaly. Specifically, let $\widetilde{D}_{\alpha}^n$ be defined as
\begin{equation*}
    \widetilde{D}_{\alpha}^n=\bigg\{x\in\Omega \, | \, \frac{\widetilde{p}_{\varnothing}^n(x)}{\widetilde{\lambda}_n} < \alpha\bigg\},  \ \forall \ \alpha>0,
\end{equation*}
then the reconstructed region is $\widetilde{D}_{\alpha^*}^n$, where $\alpha^*$ is the unique solution of
\begin{equation}\label{eqn:da}
    \int_{\widetilde{D}_{\alpha}^n}\widetilde{p}_{\varnothing}^n(x)\,dx=\varepsilon^* \widetilde{\lambda}_n \lVert \tilde{g}_n \rVert^2,
\end{equation}
with respect to $\alpha$. In \eqref{eqn:da} the parameter $\varepsilon^*$ satisfies
\begin{equation}\label{eqn:thr}
\frac{\sigma_{a}^m}{\Delta\sigma_{G}}\left(1 - \frac{\delta}{\widetilde{\lambda}_n}\right) \le \varepsilon^* \le \frac{\sigma_{bg}^M}{\Delta\sigma_{G}} \left(1 + \frac{\delta}{\widetilde{\lambda}_n}\right).
\end{equation}

In summary, the Kernel Method for noisy data is
\begin{itemize}
\item Measure experimentally or compute numerically $\Lambda_{\varnothing}$;
\item measure the noisy data $\widetilde{\Lambda}_D$;
\item compute numerically the eigenvalues $\{ \widetilde{\lambda}_k \}_k$ and the eigenfunctions $\{ \widetilde{g}_k \}_k$ of $\Delta\widetilde{\Lambda}$;
\item find the smallest eigenvalue $\widetilde{\lambda}_{k^*}$ of $\Delta\widetilde{\Lambda}$ above the plateau and the corresponding eigenfunction $\widetilde{g}_{k^*}$;
\item solve problem~\eqref{eqn:dirprob} when the conductivity is $\sigo$ and the applied boundary data is $\widetilde{g}_{k^*}$;
\item find the value of $\alpha^*$ that solves \eqref{eqn:da}. The reconstruction of the unknown anomaly is, therefore, $\widetilde{D}_{\alpha^*}^n$.
\end{itemize}

\section{Analytical examples}\label{sec:anex}
In this Section, examples regarding canonical geometries are presented to clarify the main features of the Kernel Method and its limits. 

The reference geometry presents radial symmetry and, therefore, the eigenvalues and eigenfunctions of $\Lambda_D-\Lambda_{\varnothing}$ can be evaluated analytically. The data is assumed to be noise-free, i.e., $\delta=0$. 

The first case (see Section \ref{sec:cc}) refers to a circular anomaly in a circular domain. This case exemplifies the main steps of the methods for a 2D case.
The second case (see Section \ref{sec:hc}) highlights as the method reconstructs the outer support of the anomaly in the presence of cavities.

\subsection{Concentric circles}\label{sec:cc}
The first configuration consists of an unknown circular anomaly $D$ embedded in a circular domain $\Omega$, as shown in Figure \ref{fig:kman1}. The anomaly $D$ has radius $r_i$, while the domain $\Omega$ has radius $R$. The anomaly $D$ consists of a homogeneous disc of electrical conductivity $\sigma_a$, while the background consists of a homogeneous material of electrical conductivity $\sigma_{bg}$. 
\begin{figure}[ht]
\centering
\includegraphics[width=0.7\textwidth]{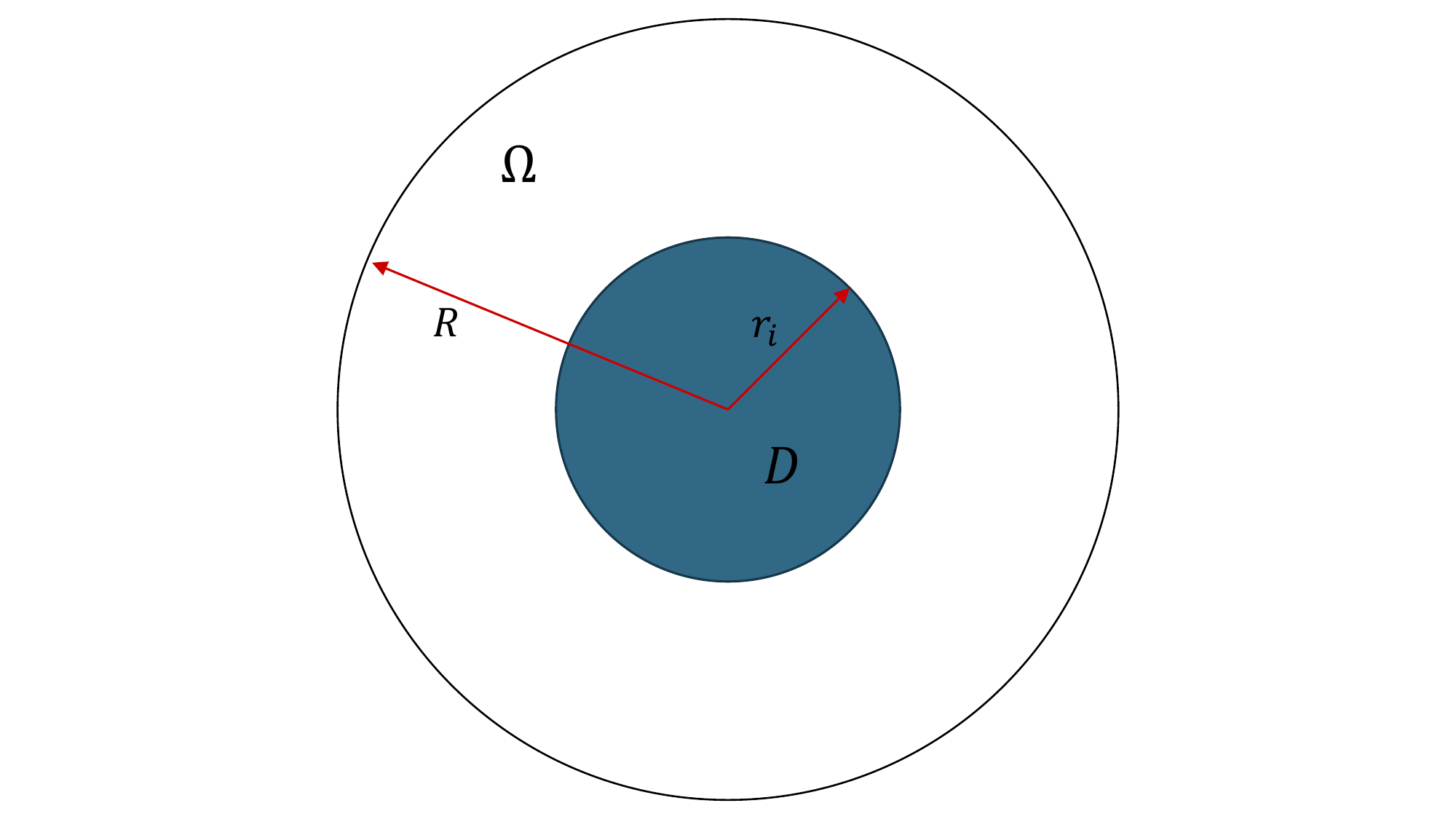}
\caption{The Kernel Method is applied to retrieve the unknown anomaly $D$.}
\label{fig:kman1}
\end{figure}

In this configuration, the eigenfunctions $g_n$ of $\Lambda_D-\Lambda_{\varnothing}$ are given by $g_n(\theta)=\frac{1}{\sqrt{\pi}}\cos(n\theta)$, $\forall n \in \mathbb{N}$, while the corresponding eigenvalues are
\begin{equation}\label{eqn:lam11}
\lambda_n=\frac{R}{\sigma_{bg}\abs{n}}\frac{2\left(\frac{r_i}{R}\right)^{\abs{n}}\left(\sigma_{bg}-\sigma_{a}\right)}{\left(\frac{r_i}{R}\right)^{\abs{n}}\left(\sigma_a-\sigma_{bg}\right)+\left(\frac{R}{r_i}\right)^{\abs{n}}\left(\sigma_a+\sigma_{bg}\right)}.
\end{equation}

For a given eigenfunction $g_n$ applied as Neumann boundary data to the reference configuration $\left( \sigo(x)=\sigma_{bg} \text{ in } \Omega \right)$), the corresponding solution $u^n_{\varnothing}$ of \eqref{eqn:dirprob} is
\begin{equation*}
u^n_{\varnothing}(r,\theta)=\frac{R}{n \sqrt{\pi} \sigma_{bg}}\left(\frac{r}{R}\right)^n\cos(n\theta),
\end{equation*}
with a related power density given by
\begin{equation}\label{eqn:prt}
p_{\varnothing}^n(r,\theta)=\sigma_{\varnothing}\abs{\nabla u_{\varnothing}^n}^2=\frac{1}{\sigma_{bg}\pi}\left(\frac{r}{R}\right)^{2n-2}.
\end{equation}

As already pointed out in Section \ref{sec:main}, the unknown anomaly is reconstructed by solving the equation
\begin{equation}\label{eqn:da_cc}
    \frac{\int_{D_{\alpha}^n} p_{\varnothing}^n(x)\,dx}{\lambda_n}=\varepsilon^*,
\end{equation}
with respect to $\alpha$, where $D_{\alpha}^n$, in this specific case, is given by
\begin{equation*}
    D_{\alpha}^n=\left\{x\in\mathbb{R}^2 \,\bigg|\, \frac{1}{\lambda_n}\frac{1}{\sigma_{bg}\pi}\left(\frac{r}{R}\right)^{2n-2}<\alpha\right\}
\end{equation*}
and $\varepsilon^*$ fulfils the following constraint 
\begin{equation*}
    \frac{\sigma_a}{\sigma_{bg}-\sigma_a}\leq \varepsilon^* \leq \frac{\sigma_{bg}}{\sigma_{bg}-\sigma_a}.
\end{equation*}
Suppose $\sigma_{bg}>2\sigma_a$, a possible choice is given by $\varepsilon^*=1$. 

Recognising that, for every $\alpha>0$, the set $D_{\alpha}^n$ is a circle centred at the origin, equation \eqref{eqn:da_cc} becomes
\begin{equation*}
\int_0^{2\pi}\int_0^{\tilde{r}_i} \frac{1}{\sigma_{bg}\pi}\left(\frac{r}{R}\right)^{2n-2}\,rdr\,d\theta=\lambda_n
\end{equation*}
which gives
\begin{equation}\label{eqn:recri}
\tilde{r}_i(n)=R\left(\frac{n\sigma_{bg}}{R^2}\lambda_n\right)^{\frac{1}{2n}}.
\end{equation}
\begin{figure}[ht]
\centering
\includegraphics[width=0.7\textwidth]{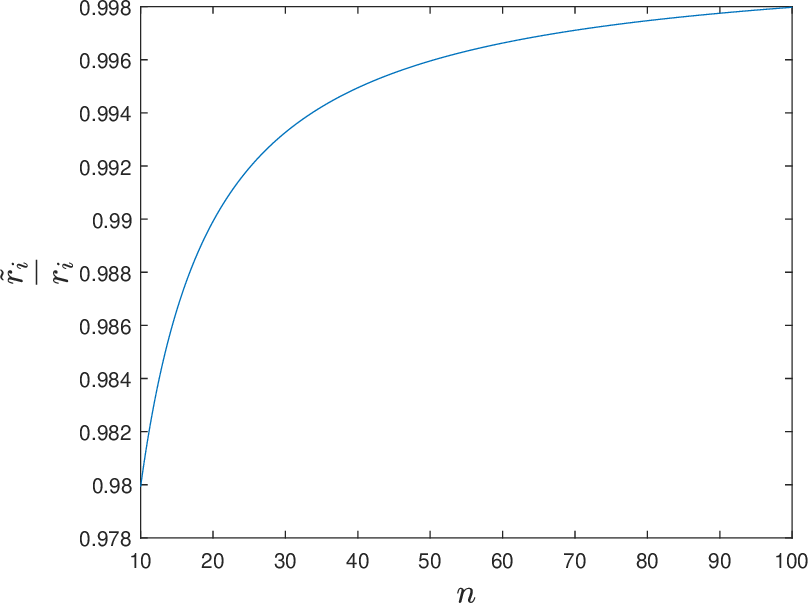}
\caption{Behaviour of the reconstructed internal radius $\tilde{r}_i$, with respect to the order of the selected eigenfunction.}
\label{fig:anex}
\end{figure}
In Figure \ref{fig:anex}, the behaviour of $\tilde{r}_i$ is depicted as a function of the order of the chosen eigenfunction. As can be seen, the reconstructed radius tends to the actual radius for $n\to+\infty$.

\subsection{Holed circle}\label{sec:hc}
This example illustrates the ability of the method to recover the outer support of the anomaly in the presence of cavities in $D$.
\begin{figure}[ht]
\centering
\includegraphics[width=0.7\textwidth]{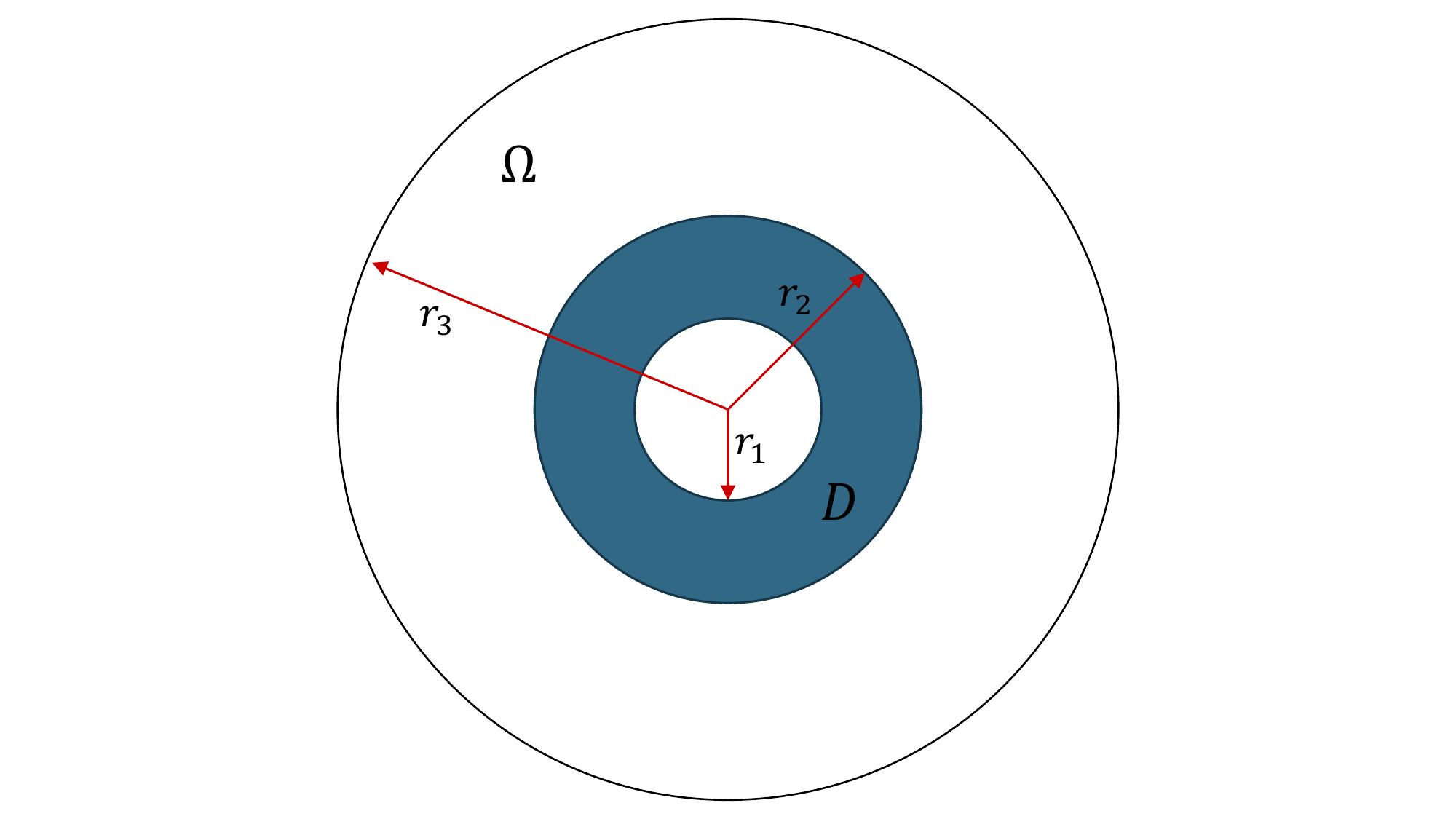}
\caption{The Kernel Method is applied to retrieve the circular crown $r_1\leq r\leq r_2$.}
\label{fig:kman2}
\end{figure}
Consider the configuration depicted in Figure \ref{fig:kman2}, where the objective is to retrieve the anomalous region occupying the circular crown $r_1\leq r \leq r_2$. The external radius of the circle is $r_3$, and the electrical conductivities are homogeneous and equal to $\sigma_a$ in the circular crown and $\sigma_{bg}$ elsewhere. 

A standard application of the separation of variables leads to 
\begin{equation}\label{eqn:lam12}
\begin{aligned}
g_n(\theta)&=\frac{1}{\sqrt{\pi}}\cos(n \theta) \\
\lambda_n&=\frac{r_3}{n\sigma_{bg}}\frac{2\left(\frac{r2}{r3}\right)^{n}\left[\left(\frac{r1}{r2}\right)^{n} -\left(\frac{r2}{r1}\right)^{n}\right](\sigma_{bg}^2-\sigma_a^2 )}{d_n} \\
d_n&=\left(\frac{r_1}{r_2}\right)^{n}\left[
\left(\frac{r_3}{r_2}\right)^{n}(\sigma_a-\sigma_{bg})^2+\left(\frac{r_2}{r_3}\right)^{n}(\sigma_a^2-\sigma_{bg}^2)\right] \\
&-\left(\frac{r_2}{r_1}\right)^{n}\left[
\left(\frac{r_3}{r_2}\right)^{n}(\sigma_a+\sigma_{bg})^2+\left(\frac{r_2}{r_3}\right)^{n}(\sigma_a^2-\sigma_{bg}^2)\right].
\end{aligned}
\end{equation}

As in the previous example, the Kernel Method requires driving the reference configuration with one of the eigenfunctions $g_n$ and then evaluating the power density on that configuration. The expression for the power density is that of \eqref{eqn:prt}, when replacing $R$ with $r_3$. 

Equation \eqref{eqn:prt} suggests that it is not possible to reconstruct the circular crown, due to the fact that the power density is a monotonic function of the radial coordinate $r$. This reflects a physical limit. Indeed, the Kernel Method seeks a proper boundary condition that generates a current density flowing externally of the anomalous region. It is evident that there is no pathway for the current density to circulate in the inner region $r<r_1$, without crossing the circular crown. 

However, the method still correctly recovers the outer boundary of the circular crown, i.e. reconstructs the outer support of the anomaly, as shown in Figure \ref{fig:anex2}.
\begin{figure}[ht]
\centering
\includegraphics[width=0.7\textwidth]{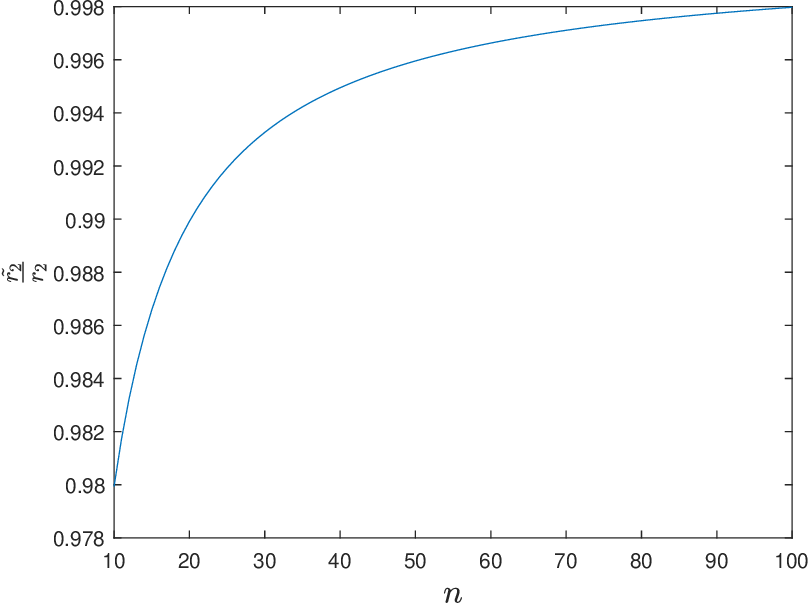}
\caption{Behaviour of the reconstructed outer radius $\tilde{r}_2$ of the circular crown, with respect to the order of the selected eigenfunction.}
\label{fig:anex2}
\end{figure}
The reconstructed radius is obtained from 
\begin{equation}
\tilde{r}_2(n)=R\left(\frac{n\sigma_{bg}}{r_3^2}\lambda_n\right)^{\frac{1}{2n}},
\end{equation}
which tends to $r_2$ as $n\to\infty$.

\section{Numerical Examples}\label{sec:num}
This last section demonstrates the effectiveness of the proposed method through examples of application. Specifically, the aim is to determine the shape, position, and dimensions of one or more anomalies that are less conductive than the background material. This is a typical case in applications where the ERT is used to determine defects in conductive materials.

The data are simulated by an in-house FEM code, based on the Galerkin method. The domain $\Omega$ is a circle with radius $r=\qty{2.5}{\cm}$ and is discretised in \num{16384} triangular elements, while the boundary $\partial\Omega$ is given by $N_b=\num{256}$ elements of the same length. The mesh has $N_p=\num{8321}$ nodes. The electric potential is discretised with a piecewise linear function, as the boundary potential, while the imposed normal component of the current density $g$ is discretised by a piecewise constant function on the boundary elements. In this discrete setting, the NtD operator is a linear map from $\mathbb{R}^{N_b-1}$ to $\mathbb{R}^{N_b-1}$, taking into account the constraint of vanishing integral mean on the boundary for both the applied current density and the measured voltages.

All reconstructions are obtained by adding synthetic noise to the data. Specifically, the following noise model is adopted, which is the discrete counterpart of \eqref{eqn:noise}
\begin{align*}
\Delta\widetilde{\mathbf{R}}&=\mathbf{R}_D-\mathbf{R}_{bg}+\eta \delta_R \mathbf{N} \\
\delta_R&=\max_{i,j}\abs{R_D(i,j)-R_{bg}(i,j)} \\
\mathbf{N}&=\frac{\mathbf{A}+\mathbf{A}^T}{2} \\
A(i,j)&\sim\mathcal{N}(0,1)
\end{align*}
where $\mathbf{R}_D$ and $\mathbf{R}_{bg}$ are the discretization of $\Lambda_D$ and $\Lambda_{\varnothing}$ on a finite subspace of $L_{\diamond}^2(\partial\Omega)$, respectively, $\Delta\widetilde{\mathbf{R}}$ is the noisy version of $\mathbf{R}_D-\mathbf{R}_{bg}$ and $\eta$ is a constant that controls the noise level.

The background material has a uniform electrical conductivity equal to $\sigma_{bg}=\qty{200}{\siemens./\m}$, while the electrical conductivity of the anomaly $D$ is equal to $\sigma_a=\SI{1}{\siemens./\m}$. 

The threshold $\varepsilon^*$ (see equation \ref{eqn:da}) is set to $\widetilde{\lambda}_n$, which is a feasible choice since $\sigma_{bg}>2\sigma_a$ (see also Section \ref{sec:cc}).

The reconstructions are reported in Figures~\ref{fig:ccircles} and~\ref{fig:ccircles2}, where a suitable set of different configurations is proposed to test various aspects of the imaging method. From these Figures, it is possible to infer that, at a realistic noise level, the reconstructions are of high quality. Specifically, the reconstructions demonstrate (i) excellent performance on convex anomalies (A, B, D, F, G, J), (ii) possibility of resolving multiple anomalies (H, N, O), (iii) tendency to convexification (C, E, I, K, L, M), and (iv) impossibility of reconstructing the interior of an anomaly with cavities (I).
\begin{figure}[tbhp]
\centering
\subfloat[]{\includegraphics[width=0.25\textwidth]{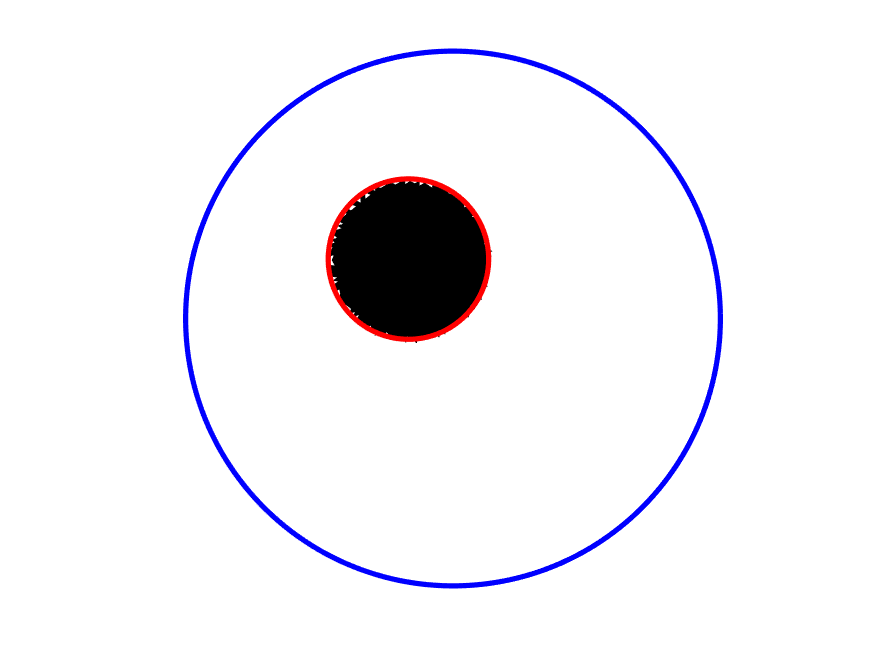}} \quad
\subfloat[]{\includegraphics[width=0.25\textwidth]{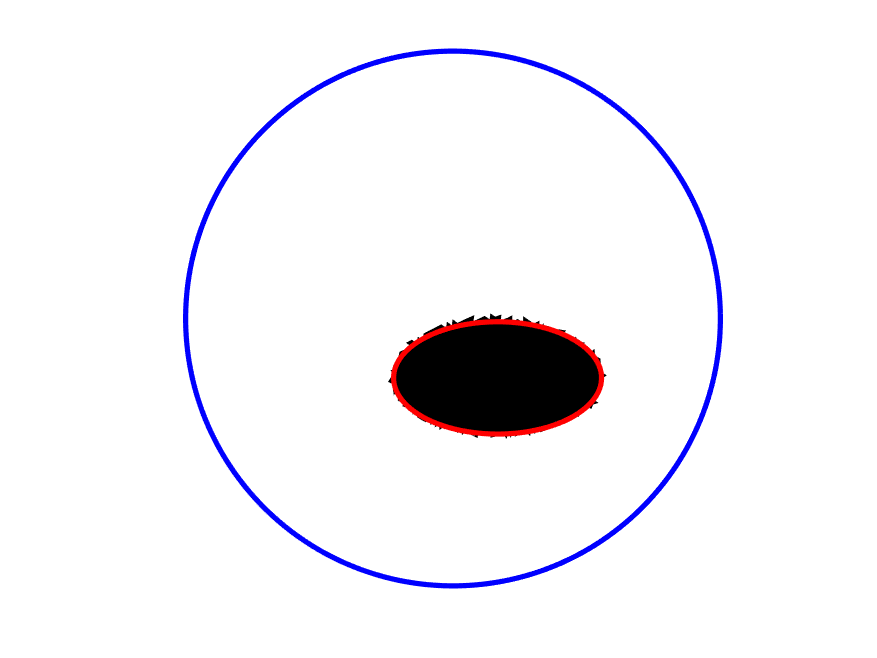}} \quad
\subfloat[]{\includegraphics[width=0.25\textwidth]{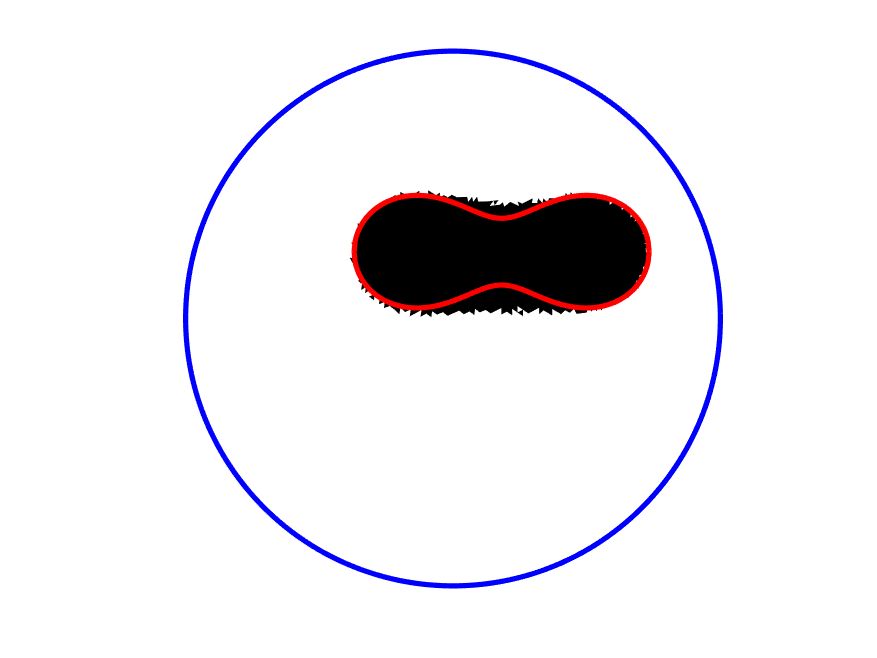}} \\
\subfloat[]{\includegraphics[width=0.25\textwidth]{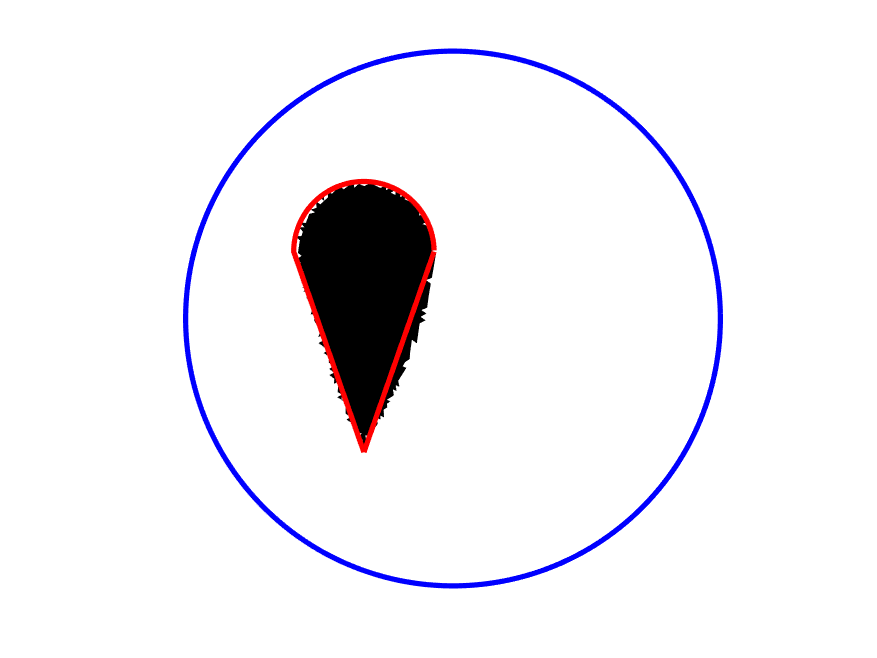}} \quad
\subfloat[]{\includegraphics[width=0.25\textwidth]{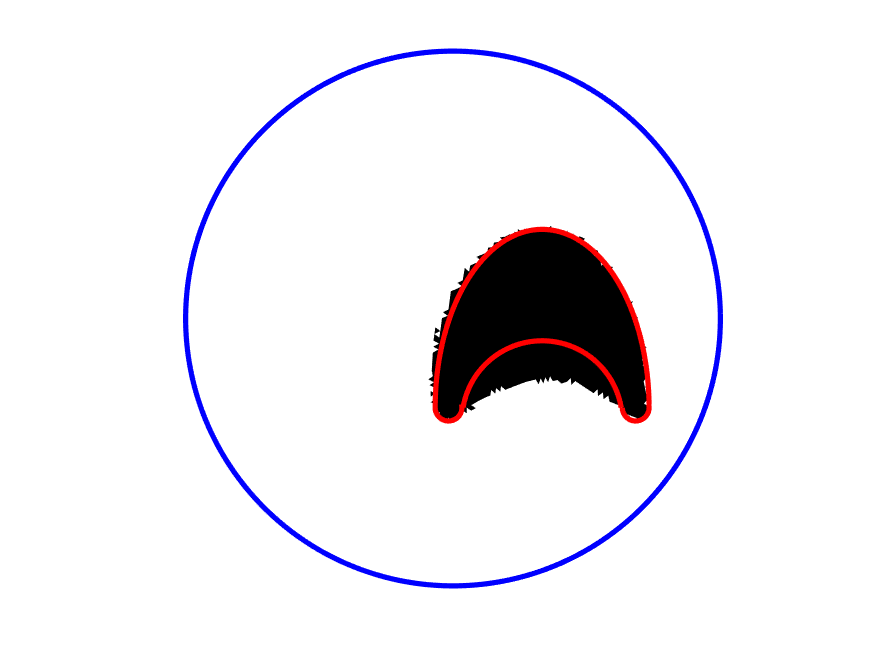}} \quad
\subfloat[]{\includegraphics[width=0.25\textwidth]{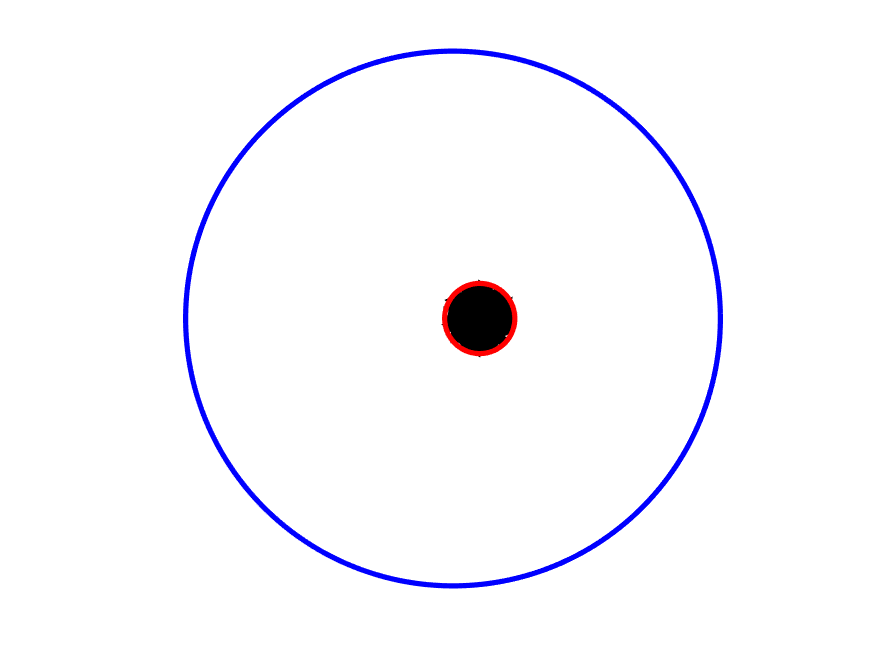}} \\
\subfloat[]{\includegraphics[width=0.25\textwidth]{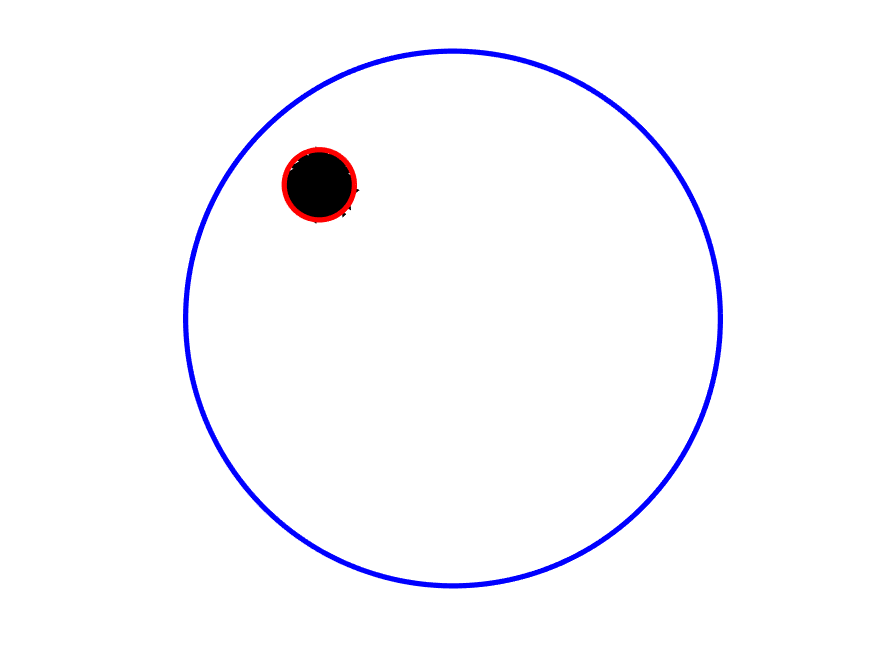}} \quad
\subfloat[]{\includegraphics[width=0.25\textwidth]{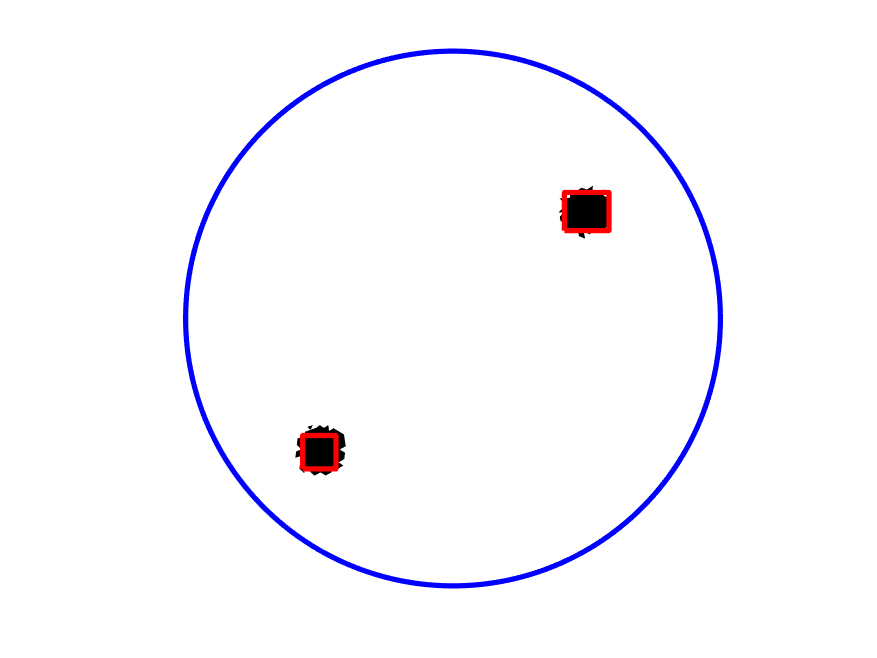}} \quad
\subfloat[]{\includegraphics[width=0.25\textwidth]{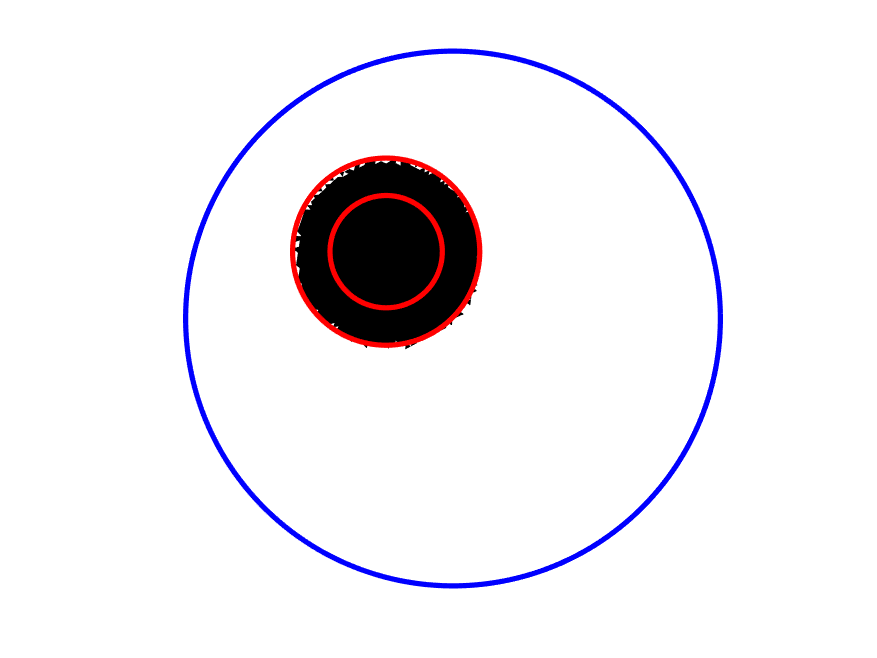}} \\
\subfloat[]{\includegraphics[width=0.25\textwidth]{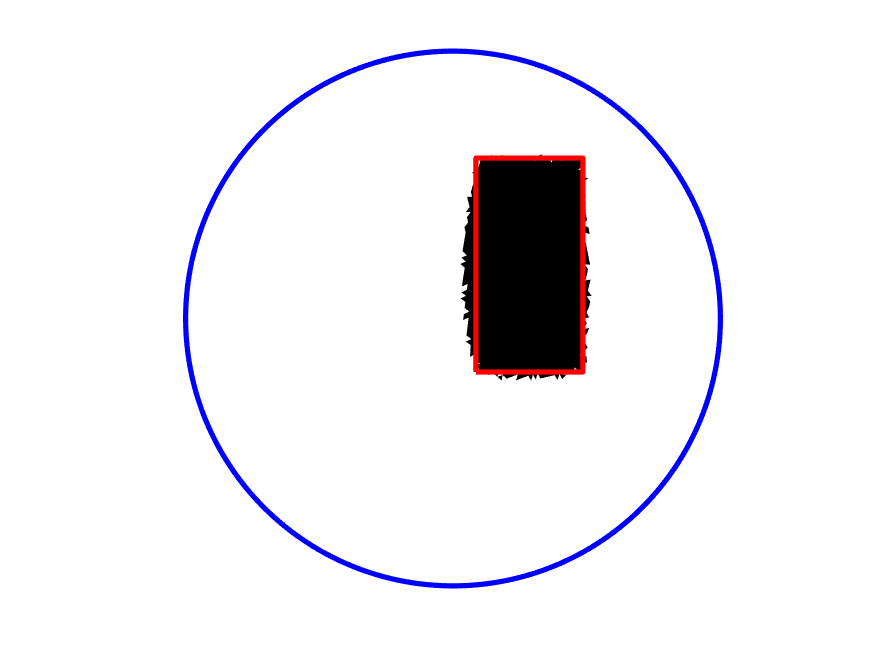}} \quad
\subfloat[]{\includegraphics[width=0.25\textwidth]{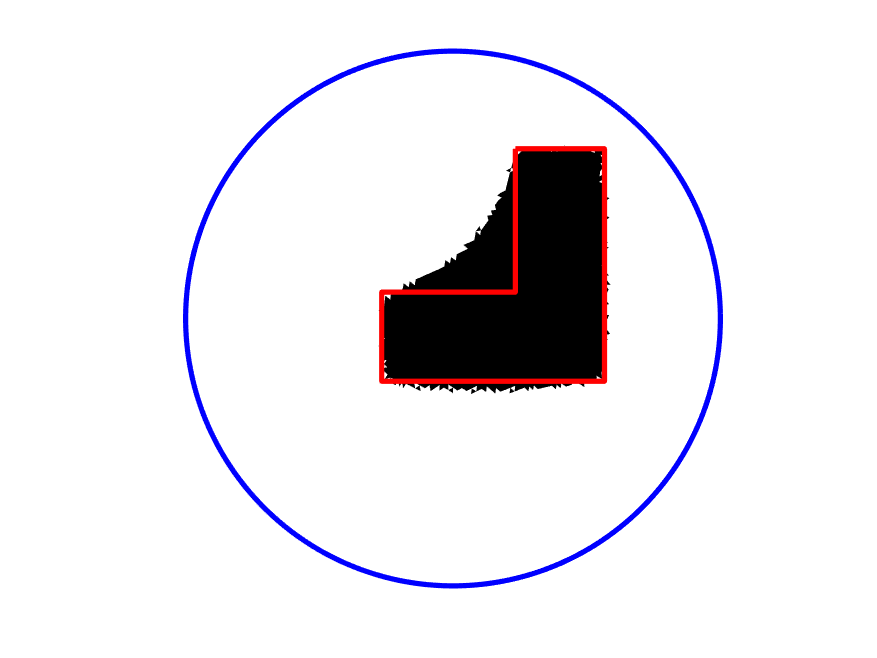}} \quad
\subfloat[]{\includegraphics[width=0.25\textwidth]{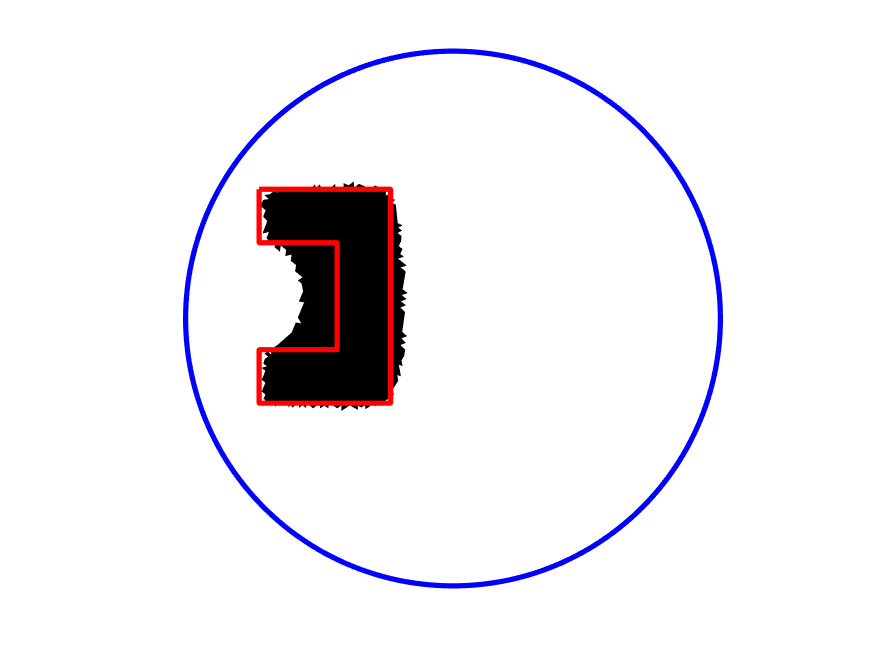}} \\
\subfloat[]{\includegraphics[width=0.25\textwidth]{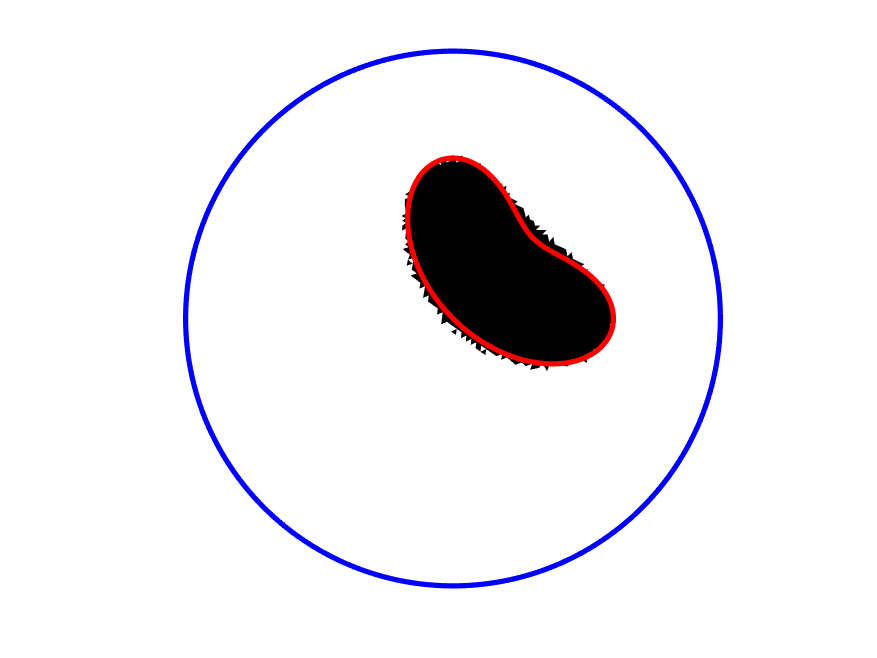}} \quad
\subfloat[]{\includegraphics[width=0.25\textwidth]{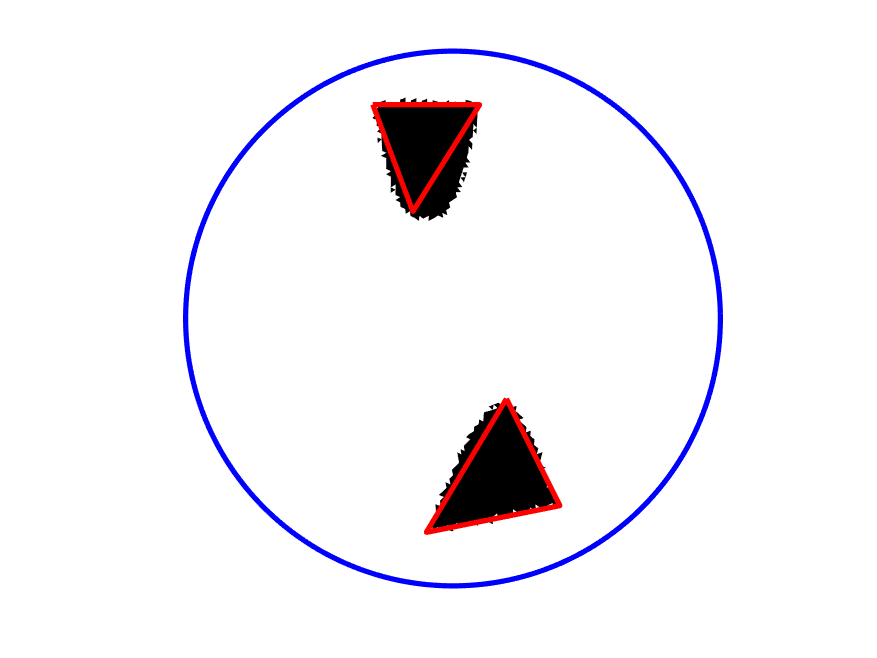}} \quad
\subfloat[]{\includegraphics[width=0.25\textwidth]{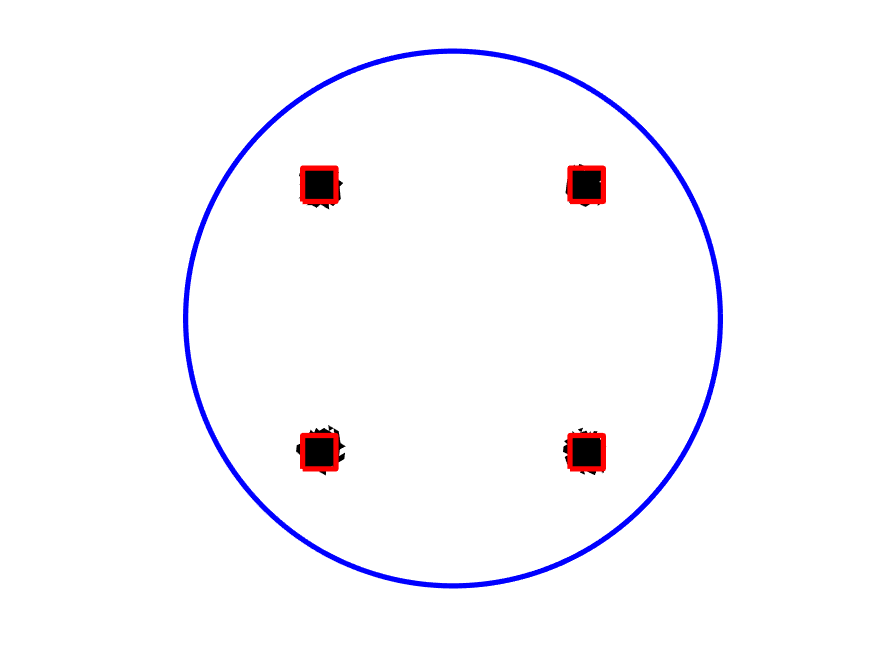}} \\
\caption{Reconstructions for $\eta=0$ (within the machine precision). In black the reconstructed anomaly $\tilde{D}$, while in red the boundary of the actual anomaly $D$.}
\label{fig:ccircles}
\end{figure}

\begin{figure}[tbhp]
\centering
\subfloat[]{\includegraphics[width=0.25\textwidth]{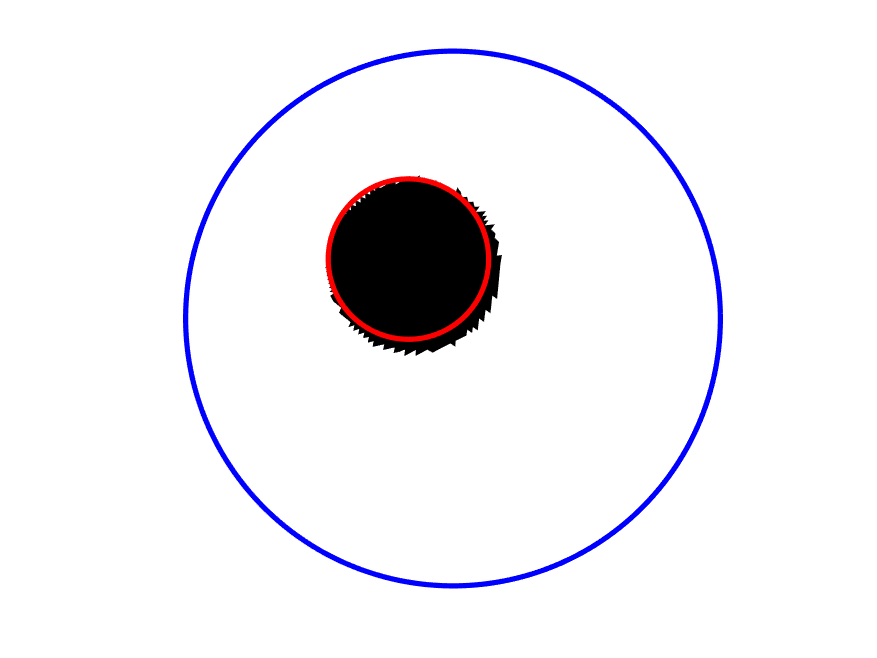}} \quad
\subfloat[]{\includegraphics[width=0.25\textwidth]{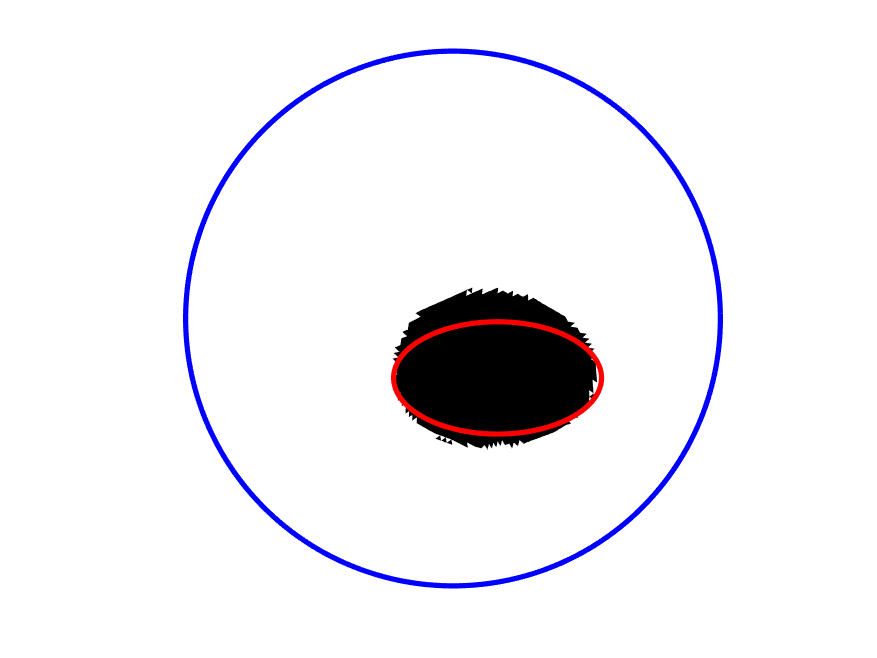}} \quad
\subfloat[]{\includegraphics[width=0.25\textwidth]{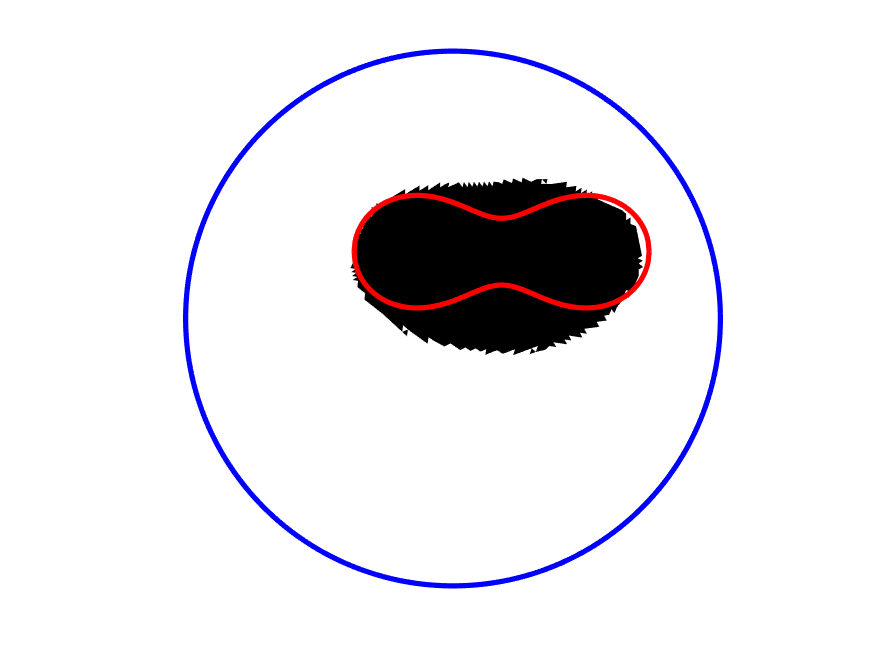}} \\
\subfloat[]{\includegraphics[width=0.25\textwidth]{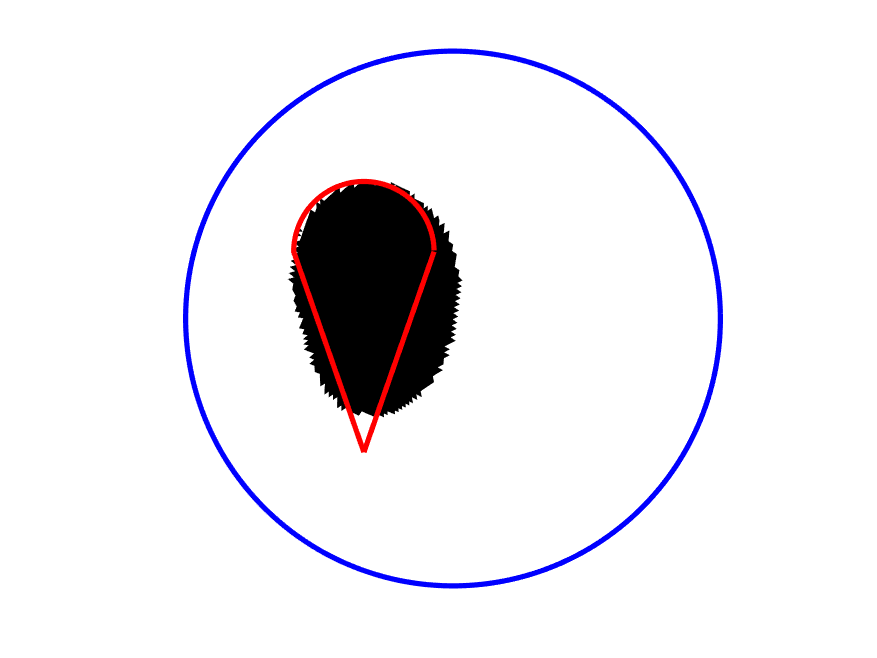}} \quad
\subfloat[]{\includegraphics[width=0.25\textwidth]{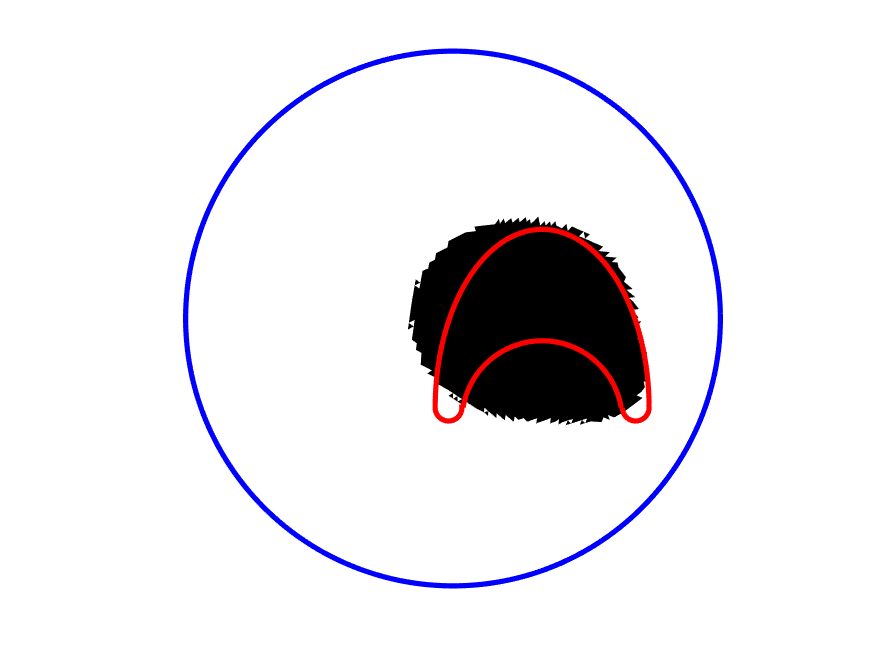}} \quad
\subfloat[]{\includegraphics[width=0.25\textwidth]{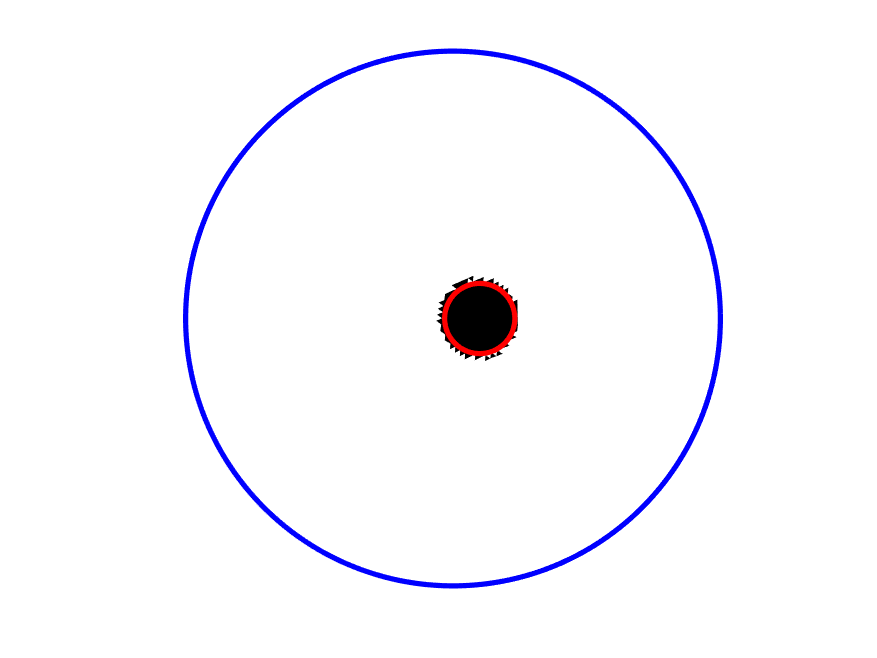}} \\
\subfloat[]{\includegraphics[width=0.25\textwidth]{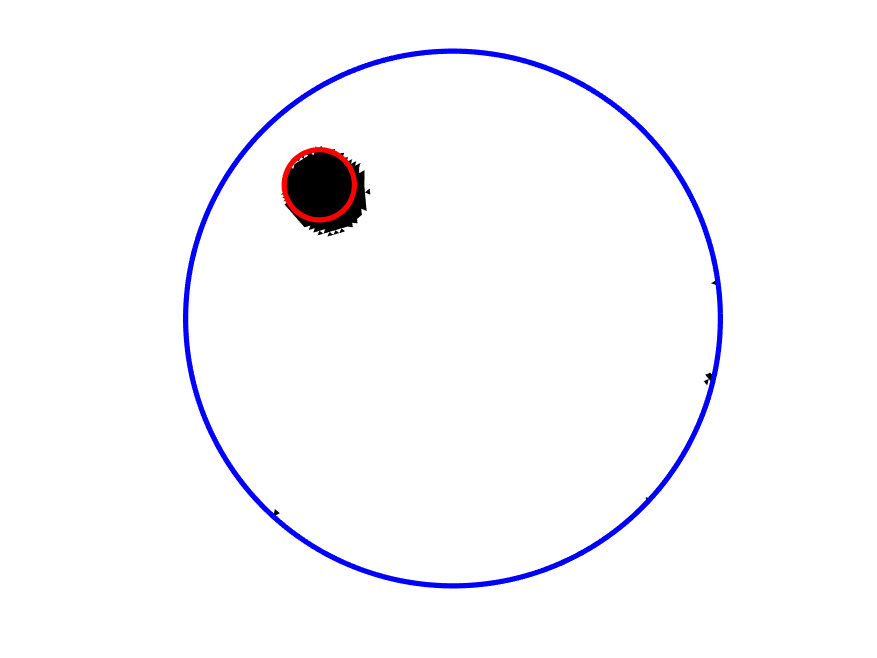}} \quad
\subfloat[]{\includegraphics[width=0.25\textwidth]{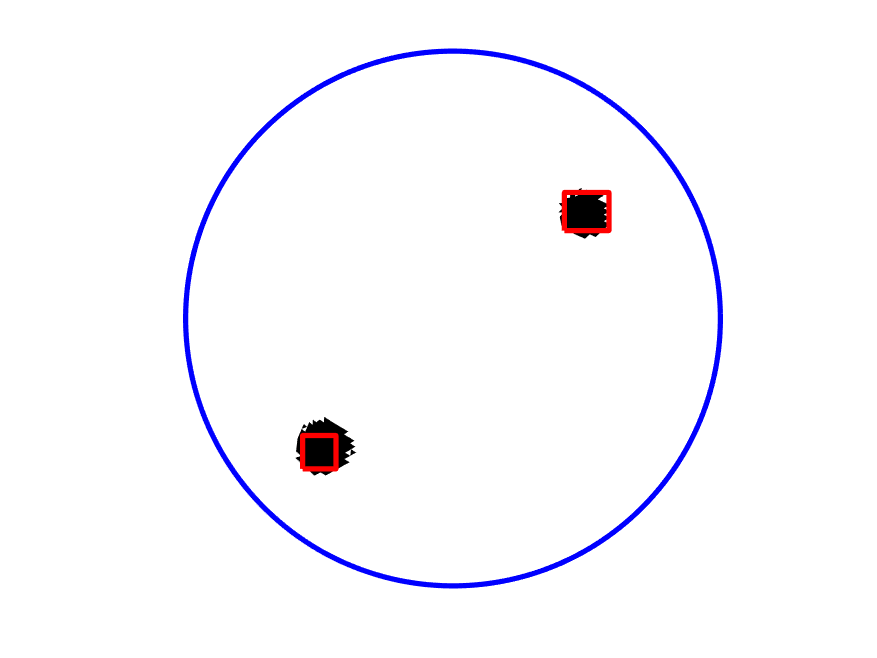}} \quad
\subfloat[]{\includegraphics[width=0.25\textwidth]{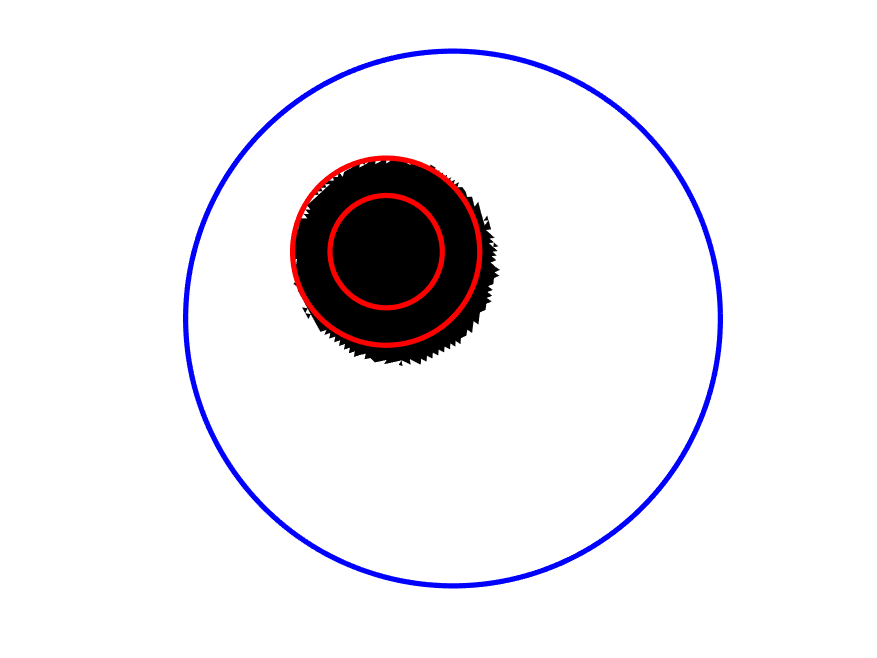}} \\
\subfloat[]{\includegraphics[width=0.25\textwidth]{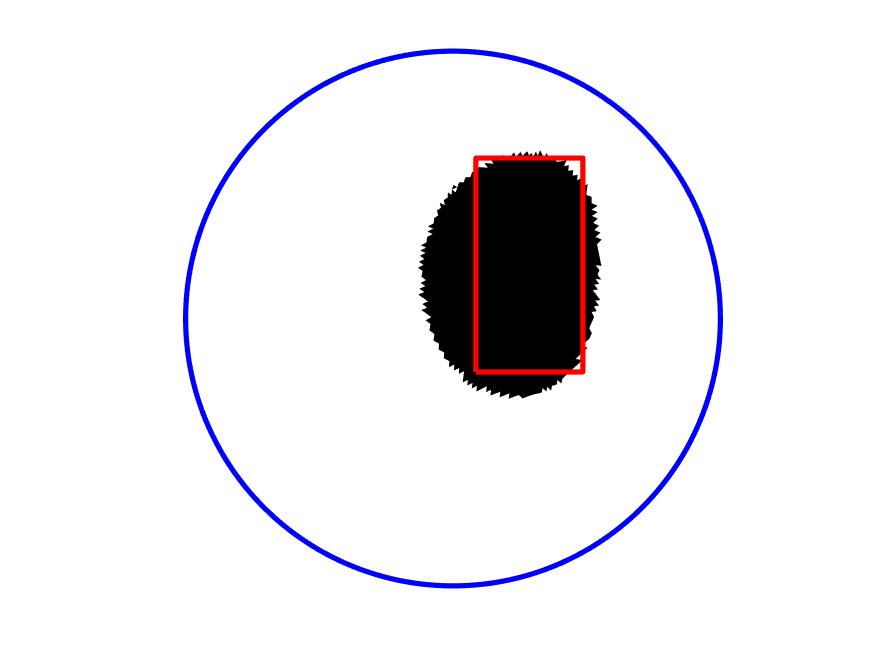}} \quad
\subfloat[]{\includegraphics[width=0.25\textwidth]{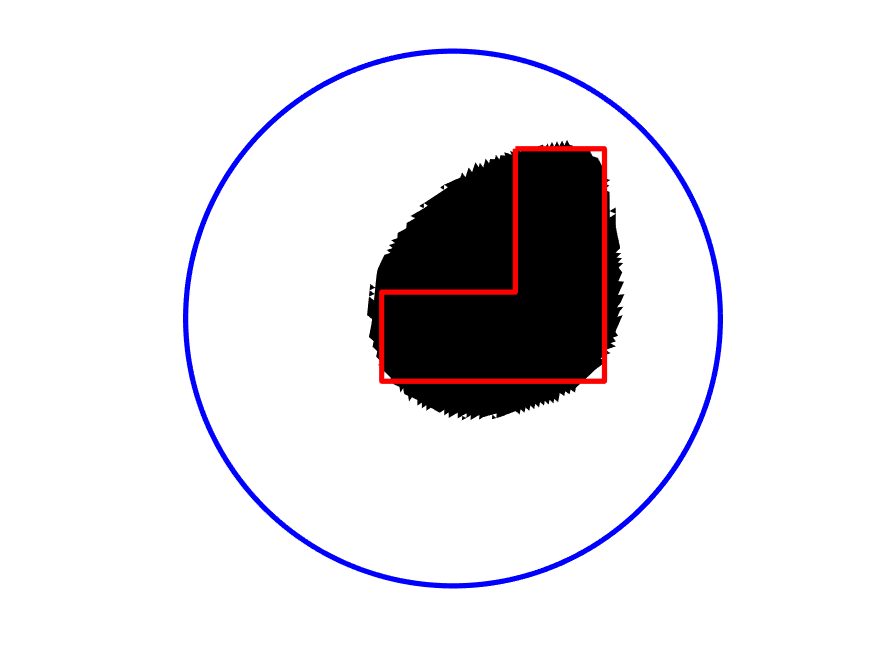}} \quad
\subfloat[]{\includegraphics[width=0.25\textwidth]{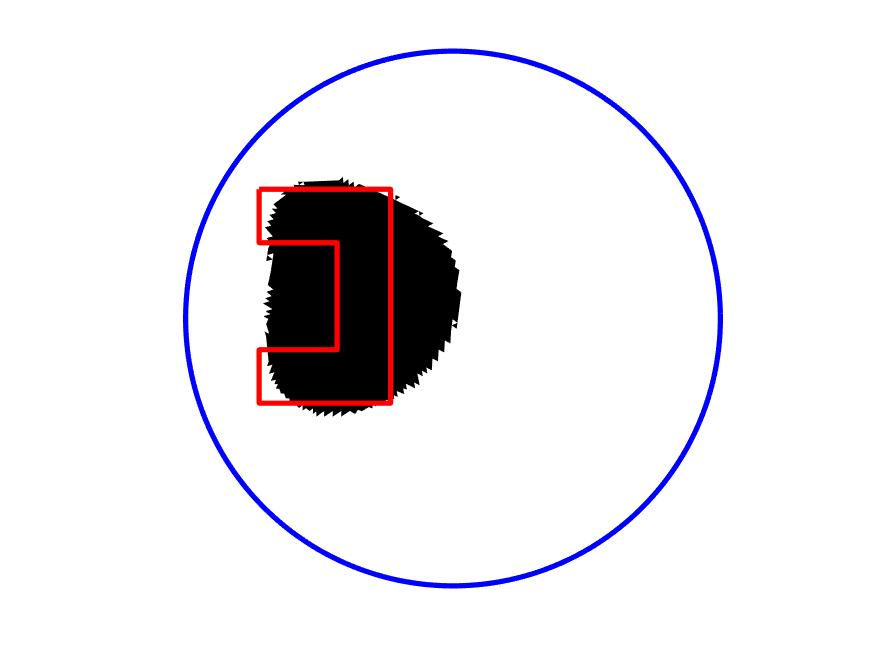}} \\
\subfloat[]{\includegraphics[width=0.25\textwidth]{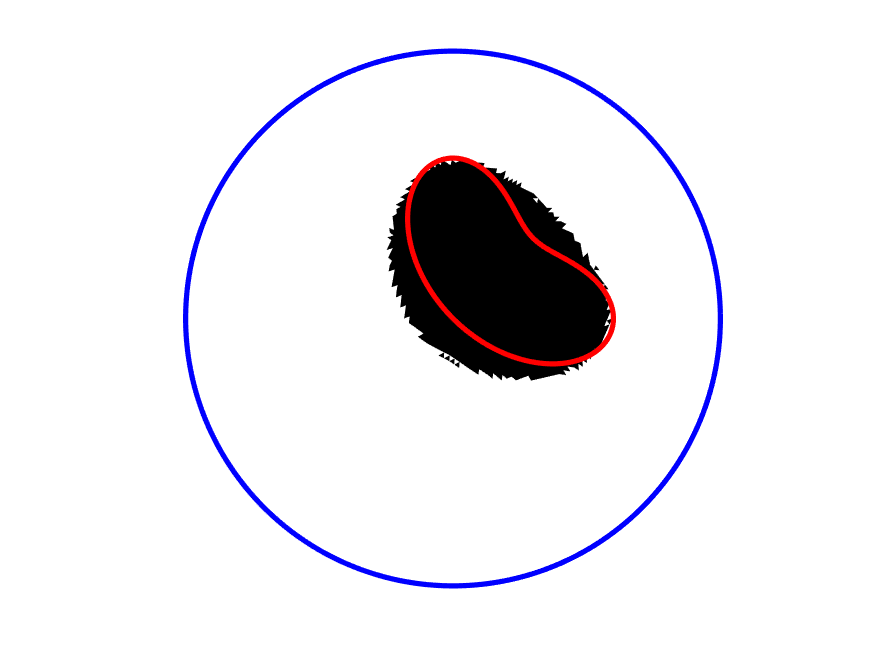}} \quad
\subfloat[]{\includegraphics[width=0.25\textwidth]{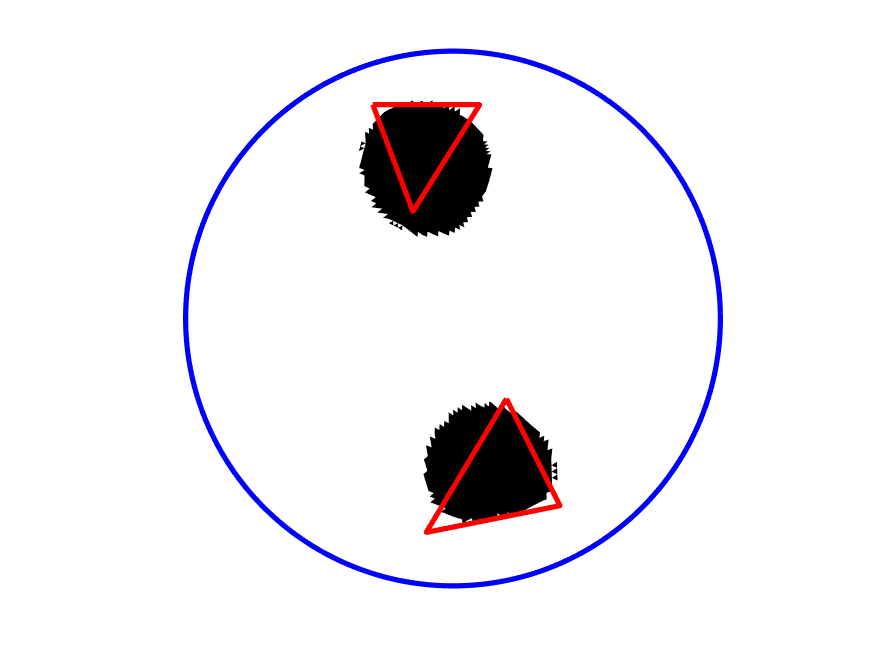}} \quad
\subfloat[]{\includegraphics[width=0.25\textwidth]{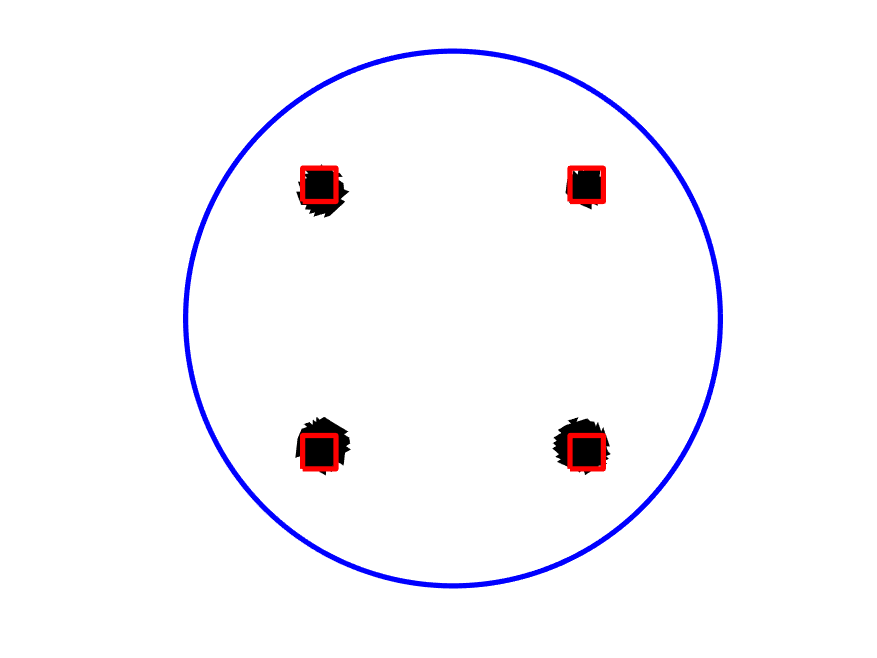}} \\
\caption{Reconstructions for $\eta=10^{-3}$. In black the reconstructed anomaly $\tilde{D}$, while in red the boundary of the actual anomaly $D$.}
\label{fig:ccircles2}
\end{figure}

\section{Conclusions}\label{sec:conclusions}
In this work, a new non-iterative imaging method for Electrical Resistance Tomography is presented. Specifically, the method is designed for the inverse obstacle problem, where the goal is to recover an unknown anomaly embedded in a known background. 

The method relies on the possibility of finding a boundary condition $g$ that produces a current density that is almost vanishing in the regions occupied by the anomaly, when applied to the reference configuration without the anomaly. This \lq\lq special\rq\rq \ boundary data $g$ is proven to be given by the higher order eigenfunctions of the operator $\Lambda_D-\Lambda_{\varnothing}$, which is the difference between the Neumann-to-Dirichlet map measured in the presence of an unknown anomaly $D$ and the Neumann-to-Dirichlet map computed or measured for the reference configuration.

The imaging method, characterised by a very simple and stable numerical implementation even in the presence of noise, provides a straightforward way to evaluate the outer support of the unknown anomaly $D$ at minimal computational cost.

Numerical examples demonstrate that the proposed algorithm achieves excellent performance.

\section*{Acknowledgment}
This work was supported by the Italian Ministry of University and Research under the PRIN-2022, Grant Number 2022Y53F3X \lq\lq Inverse Design of High-Performance Large-Scale Metalenses \rq\rq.

\section*{Authorship contribution statement}

{\bf A. Tamburrino}: Conceptualization, Methodology, Formal analysis, Writing, Supervision.

{\bf V. Mottola}: Conceptualization, Methodology, Formal analysis, Writing. 

\appendix
\section{Approximation of kernel sequences}\label{proof_1}

\subsection{Proof of \Cref{prop_approx}}
Since sequences with finitely non-zero terms are dense in $\ell^2(\mathbb{R})$, for each $f_n$, and for any $\varepsilon>0$, there exists $\beta_n<\infty$ such that
\begin{equation}\label{eqn_ub_sc}
    \norm{ \sum_{k=\beta_n+1}^{+\infty}\langle f_n,g_k\rangle g_k }^2 < \varepsilon.
\end{equation}
This yields the claim for $\alpha_n=1$, indeed
\begin{equation*}
    \norm{f_n - \sum_{k=1}^{\beta_n} \langle f_n , g_k \rangle g_k}^2=\sum_{k=\beta_n+1}^{+\infty} \langle f_n , g_k \rangle^2 < \varepsilon.
\end{equation*}

In the following, it is shown that $\alpha_n$ shifts towards higher indices as $n$ increases. For a given $K>0$, 
\begin{equation*}
     \langle (\Lambda_D - \Lambda_{\varnothing}) f_n, f_n \rangle = \sum_{k=1}^{+\infty}\lambda_k |\langle f_n,g_k\rangle|^2  \geq \sum_{k=1}^{K}\lambda_k |\langle f_n,g_k\rangle|^2
    \geq \lambda_K \sum_{k=1}^{K}|\langle f_n,g_k\rangle|^2,
\end{equation*}
where the first equality follows from Equation \eqref{eqn_sc_1}, and the leftmost inequality follows from $\{\lambda_k\}$ being a monotonic decreasing sequence. Furthermore, since $\{f_n\}_n$ is a kernel sequence, it follows that
\begin{equation*}
    \lim_{n\to+\infty}\sum_{k=1}^{K}|\langle f_n,g_k\rangle|^2
    \leq \lim_{n\to+\infty} \frac{\langle (\Lambda_D - \Lambda_{\varnothing}) f_n, f_n \rangle}{\lambda_K}
    = 0. 
\end{equation*}
Hence, for any $\varepsilon>0$ and for any fixed $K$, there exists $n_0$ such that, for all $n\ge n_0$,
\begin{equation*}
    \norm{f_n- \sum_{k=K+1}^{+\infty}\langle f_n,g_k\rangle g_k }^2= \sum_{k=1}^{K}|\langle f_n,g_k\rangle|^2
    < \varepsilon.
\end{equation*}
This shows that the projection of $f_n$ onto any fixed finite-dimensional eigenspace becomes negligible as $n$ increases. 

Therefore, for a given element $f_n$, there exists $\alpha_n$ such that
\begin{equation}\label{eqn_lb_sc}
    \norm{ f_n-\sum_{k=\alpha_n}^{+\infty}\langle f_n,g_k\rangle g_k }^2 < \varepsilon.
\end{equation}
Combining \eqref{eqn_ub_sc} and \eqref{eqn_lb_sc}, yields
\begin{equation*}
    \norm{f_n- \sum_{k=\alpha_n}^{\beta_n}\langle f_n,g_k\rangle g_k}^2 = \norm{\sum_{k=1}^{\alpha_n-1}\langle f_n,g_k\rangle g_k + \sum_{k=\beta_n+1}^{+\infty}\langle f_n,g_k\rangle g_k}^2<2\varepsilon.
\end{equation*}

\subsection{Approximation of the net power}
\Cref{prop_approx} shows that any element of a kernel sequence can be approximated, with arbitrary precision, by an element of $L^2_{\diamond}(\partial\Omega)$ given by a finite linear combination of eigenfunctions. In this section, sufficient conditions are discussed under which such an approximation leaves the ohmic power in a prescribed domain $S$ almost unchanged.

Given an element $f_n$ of a kernel sequence, let $\widetilde{f}_n$ be the projection of $f_n$ onto the vector space spanned by the first eigenfunctions $g_{1},\ldots,g_{N}$ of $\Lambda_D-\Lambda_{bg}$, and let $r_n$ be the residual part, that is, the projection of $f_n$ onto the vector space spanned by the remaining eigenfunctions $g_{N+1},\ldots $. The element $f_n$ can be written as%
\begin{equation*}
    f_n=\widetilde{f}_n+r_n.    
\end{equation*}

Let $\varphi_n$ be the scalar potential due to $f_n$, and let $\widetilde{\varphi}_n$ and $\psi_n$ be the scalar potentials due to $\widetilde{f}_n$ and $r_n$, respectively. It turns out that
\begin{equation*}
    \int_{S}\sigma _{\varnothing }\left\vert \nabla \varphi_n \right\vert ^{2}%
    \text{d}x=\int_{S}\sigma _{\varnothing }\left\vert \nabla \widetilde{\varphi 
    }_n\right\vert ^{2}\text{d}x+\int_{S}\sigma _{\varnothing }\left\vert \nabla
    \psi_n \right\vert ^{2}\text{d}x+2\int_{S}\sigma _{\varnothing }\nabla 
    \widetilde{\varphi }_n\cdot \nabla \psi_n \text{d}x.
\end{equation*}
It is worth noting that
\begin{equation}
\label{eq_inp}
    \int_{S}\sigma _{\varnothing }\left\vert \nabla \psi_n \right\vert ^{2}\text{d}%
    x\leq \int_{\Omega }\sigma _{\varnothing }\left\vert \nabla \psi_n \right\vert
    ^{2}\text{d}x=\left\langle r_n,\Lambda _{\varnothing }r_n\right\rangle \leq
    \lambda _{1}^{\varnothing }\left\Vert r_n\right\Vert^{2},
\end{equation}
where $\lambda _{1}^{\varnothing}$ is the largest eigenvalue of $\Lambda_{\varnothing}$. In addition, it follows that
\begin{eqnarray*}
\left\vert \int_{S}\sigma _{\varnothing }\nabla \widetilde{\varphi }_n\cdot
\nabla \psi \text{d}x\right\vert  \leq \sqrt{\int_{S}\sigma _{\varnothing }\left\vert \nabla \widetilde{%
\varphi }_n\right\vert ^{2}\text{d}x}\times \sqrt{\lambda _{1}^{\varnothing }}%
\left\Vert r_n\right\Vert,
\end{eqnarray*}
as follows by applying the Cauchy-Schwarz inequality and \eqref{eq_inp}.
Therefore, the error in evaluating the ohmic power in $S$ when replacing $f_n$ with $\widetilde{f}_{n}$ is%
\begin{eqnarray*}
\left\vert \int_{S}\sigma _{\varnothing }\left\vert \nabla \varphi_n
\right\vert ^{2}\text{d}x-\int_{S}\sigma _{\varnothing }\left\vert \nabla 
\widetilde{\varphi }_n\right\vert ^{2}\text{d}x\right\vert  
&\leq
&\int_{S}\sigma _{\varnothing }\left\vert \nabla \psi_n \right\vert ^{2}\text{d%
}x+2\left\vert \int_{S}\sigma _{\varnothing }\nabla \widetilde{\varphi }_n%
\cdot \nabla \psi_n \text{d}x\right\vert  \\
&\leq &\lambda _{1}^{\varnothing }\left\Vert r_n\right\Vert^{2}+2\sqrt{%
\lambda _{1}^{\varnothing }}\left\Vert r_n\right\Vert\times \sqrt{%
\int_{S}\sigma _{\varnothing }\left\vert \nabla \widetilde{\varphi }_n%
\right\vert ^{2}\text{d}x}.
\end{eqnarray*}%
By expanding the absolute value at the left-hand side, it follows that
\begin{align*}
   \int_{S}\sigma _{\varnothing }\left\vert \nabla \widetilde{\varphi }_n\right\vert ^{2}\text{d}x 
    -2\sqrt{%
    \lambda _{1}^{\varnothing }}\left\Vert r_n\right\Vert\times \sqrt{%
    \int_{S}\sigma _{\varnothing }\left\vert \nabla \widetilde{\varphi }_n%
    \right\vert ^{2}\text{d}x}
    -\int_{S}\sigma _{\varnothing }\left\vert \nabla \varphi_n\right\vert ^{2}\text{d}x
    -\lambda _{1}^{\varnothing }\left\Vert r_n\right\Vert^{2}
    &\leq 0
     \\
    \int_{S}\sigma _{\varnothing }\left\vert \nabla \widetilde{\varphi }_n\right\vert ^{2}\text{d}x 
    + 2\sqrt{%
    \lambda _{1}^{\varnothing }}\left\Vert r_n\right\Vert\times \sqrt{%
    \int_{S}\sigma _{\varnothing }\left\vert \nabla \widetilde{\varphi }_n%
    \right\vert ^{2}\text{d}x}
    -\int_{S}\sigma _{\varnothing }\left\vert \nabla \varphi_n\right\vert ^{2}\text{d}x
    +\lambda _{1}^{\varnothing }\left\Vert r_n\right\Vert^{2}
    &\geq 0    
\end{align*}
The solution of the two above inequalities with respect to $\sqrt{\int_{S}\sigma _{\varnothing }\left\vert \nabla \widetilde{%
\varphi }_n\right\vert ^{2}\text{d}x}$ yields the following estimate
\begin{eqnarray*}
\sqrt{\int_{S}\sigma _{\varnothing }\left\vert \nabla \varphi_n \right\vert
^{2}\text{d}x}-\sqrt{\lambda _{1}^{\varnothing }}\left\Vert r_n\right\Vert
 &\leq &\sqrt{\int_{S}\sigma _{\varnothing }\left\vert \nabla \widetilde{%
\varphi }_n\right\vert ^{2}\text{d}x} \\
&\leq &\sqrt{\int_{S}\sigma _{\varnothing }\left\vert \nabla \varphi_n
\right\vert ^{2}\text{d}x+2\lambda _{1}^{\varnothing }\left\Vert
r_n\right\Vert^{2}}+\sqrt{\lambda _{1}^{\varnothing }}\left\Vert
r_n\right\Vert
\end{eqnarray*}%
or, to the first order for small residual $\left\Vert r_n\right\Vert _{2}$,
i.e., for $\lambda _{1}^{\varnothing }\left\Vert r_n\right\Vert^{2}\ll
\int_{S}\sigma _{\varnothing }\left\vert \nabla \varphi_n \right\vert ^{2}$d$x,
$ it turns out that
\begin{equation}\label{eqn_bound_pow}
\left\vert \sqrt{\int_{S}\sigma _{\varnothing }\left\vert \nabla \widetilde{%
\varphi }_n\right\vert ^{2}\text{d}x}-\sqrt{\int_{S}\sigma _{\varnothing }\left\vert
\nabla \varphi_n \right\vert ^{2}\text{d}x}\right\vert \leq \sqrt{\lambda
_{1}^{\varnothing }}\left\Vert r_n\right\Vert.  
\end{equation}%

Since $\left\Vert r_n\right\Vert$ can be made arbitrarily small, the
difference on the left-hand side of \eqref{eqn_bound_pow} can also be made arbitrarily small.

\subsection{Approximation of the key ratio}
In this section, the impact of approximating the elements of a kernel sequence, as a linear combination of a finite number of eigenfunctions, on the key ratio appearing in \eqref{eqn_seq_un} is investigated. Specifically, the approximation error is explicitly computed for a kernel sequence $\{ f_n \}_{n\in\mathbb{N}}$ localising the power in a set $S \subseteq \Omega\setminus D^*$, i.e. a kernel sequence such that
\begin{equation*}
    \lim_{n \to +\infty} \frac{\int_S \sigma_{\varnothing}\abs{\nabla\varphi_n}^2\text{d}x}{\langle (\Lambda_D - \Lambda_{\varnothing}) f_n, f_n \rangle}=+\infty.
\end{equation*}

First, it is observed that the inequality \eqref{eqn_bound_pow} implies
\begin{equation}\label{eqn_bound_pow_2}
    \frac{\left\vert\int_{S}\sigma _{\varnothing }\left\vert \nabla \widetilde{\varphi }_n\right\vert ^{2}\text{d}x
    -\int_{S}\sigma _{\varnothing }\left\vert \nabla \varphi_n \right\vert ^{2}\text{d}x\right\vert}
    {\sqrt{\int_{S}\sigma _{\varnothing }\left\vert \nabla \widetilde{%
\varphi }_n\right\vert ^{2}\text{d}x}+\sqrt{\int_{S}\sigma _{\varnothing }\left\vert
\nabla \varphi_n \right\vert ^{2}\text{d}x}} \leq \sqrt{\lambda_{\varnothing}^1}\norm{r_n}.
\end{equation}
Furthermore, considering again \eqref{eqn_bound_pow}
\begin{align*}
     \sqrt{\int_{S}\sigma _{\varnothing }\left\vert \nabla \widetilde{%
\varphi }_n\right\vert ^{2}\text{d}x}+\sqrt{\int_{S}\sigma _{\varnothing }\left\vert
\nabla \varphi_n \right\vert ^{2}\text{d}x} &\leq \sqrt{\lambda
_{1}^{\varnothing }}\left\Vert r_n\right\Vert +2\sqrt{\int_{S}\sigma _{\varnothing }\left\vert
\nabla \varphi_n \right\vert ^{2}\text{d}x} \\ 
     \sqrt{\int_{S}\sigma _{\varnothing }\left\vert \nabla \widetilde{%
\varphi }_n\right\vert ^{2}\text{d}x}+\sqrt{\int_{S}\sigma _{\varnothing }\left\vert
\nabla \varphi_n \right\vert ^{2}\text{d}x} &\geq -\sqrt{\lambda
_{1}^{\varnothing }}\left\Vert r_n\right\Vert +2\sqrt{\int_{S}\sigma _{\varnothing }\left\vert
\nabla \varphi_n \right\vert ^{2}\text{d}x}. 
\end{align*}
Hence, to the first order for small residual $\left\Vert r_n\right\Vert$,
i.e., for $\lambda _{1}^{\varnothing }\left\Vert r_n\right\Vert^{2}\ll
\int_{S}\sigma _{\varnothing }\left\vert \nabla \varphi_n \right\vert ^{2}$d$x$, it turns out that
\begin{equation}\label{eqn_approx_1}
    \sqrt{\int_{S}\sigma _{\varnothing }\left\vert \nabla \widetilde{%
\varphi }_n\right\vert ^{2}\text{d}x}+\sqrt{\int_{S}\sigma _{\varnothing }\left\vert
\nabla \varphi_n \right\vert ^{2}\text{d}x} \approx 2\sqrt{\int_{S}\sigma _{\varnothing }\left\vert
\nabla \varphi_n \right\vert ^{2}\text{d}x}.
\end{equation}
Combining \eqref{eqn_bound_pow_2} and \eqref{eqn_approx_1} yields the following result
\begin{equation}\label{eqn_bound_pow_3}
    \left\vert\int_{S}\sigma _{\varnothing }\left\vert \nabla \widetilde{\varphi }_n\right\vert ^{2}\text{d}x
    -\int_{S}\sigma _{\varnothing }\left\vert \nabla \varphi_n \right\vert ^{2}\text{d}x\right\vert \leq 
    2\sqrt{\lambda_{\varnothing}^1}\norm{r_n} \sqrt{\int_{S}\sigma _{\varnothing }\left\vert \nabla \varphi_n \right\vert ^{2}\text{d}x}.
\end{equation}

Finally, for the key ratio appearing in \eqref{eqn_seq_un}, it follows that
\begin{equation*}
    \begin{split}
        \frac{\int_S \sigma_{\varnothing}\abs{\nabla\widetilde{\varphi}_n}^2\text{d}x}{\langle (\Lambda_D - \Lambda_{\varnothing}) \widetilde{f}_n, \widetilde{f}_n \rangle} & \geq \frac{1}{k_u}\frac{\int_S \sigma_{\varnothing}\abs{\nabla\widetilde{\varphi}_n}^2\text{d}x}
        {\int_D \sigma_{\varnothing}\abs{\nabla\widetilde{\varphi}_n}^2\text{d}x} \\
        & =\frac{1}{k_u}\frac{\int_S \sigma_{\varnothing}\abs{\nabla\varphi_n}^2\text{d}x+\int_S \sigma_{\varnothing}\abs{\nabla\widetilde{\varphi}_n}^2\text{d}x-\int_S \sigma_{\varnothing}\abs{\nabla\varphi_n}^2\text{d}x}
        {\int_D \sigma_{\varnothing}\abs{\nabla\varphi_n}^2\text{d}x+\int_D \sigma_{\varnothing}\abs{\nabla\widetilde{\varphi}_n}^2\text{d}x-\int_D \sigma_{\varnothing}\abs{\nabla\varphi_n}^2\text{d}x} \\
        & \geq \frac{1}{k_u}\frac{\int_S \sigma_{\varnothing}\abs{\nabla\varphi_n}^2\text{d}x}{\int_D \sigma_{\varnothing}\abs{\nabla\varphi_n}^2\text{d}x}
        \frac{1-a_n \norm{r_n}}{1+b_n \norm{r_n}} \\
        & \geq \frac{k_l}{k_u}\frac{\int_S \sigma_{\varnothing}\abs{\nabla\varphi_n}^2\text{d}x}{\langle (\Lambda_D - \Lambda_{\varnothing}) f_n, f_n \rangle}
        \frac{1-a_n \norm{r_n}}{1+b_n \norm{r_n}},
    \end{split}
\end{equation*}

where the first line follows from \eqref{eqn:kmequiv}, the third line follows from \eqref{eqn_bound_pow_3} and
\begin{equation*}
    a_n=\frac{2\sqrt{\lambda_{\varnothing}^1}}{\sqrt{\int_S \sigma_{\varnothing}\abs{\nabla\varphi_n}^2\text{d}x}}, \quad 
    b_n=\frac{2\sqrt{\lambda_{\varnothing}^1}}{\sqrt{\int_D \sigma_{\varnothing}\abs{\nabla\varphi_n}^2\text{d}x}}.
\end{equation*}

Since $\norm{r_n}$ can be made arbitrarily small for any $n$, by choosing $a_n \norm{r_n} < \alpha$ and $b_n \norm{r_n} < \alpha$, it follows that
\begin{equation*}
    \frac{\int_S \sigma_{\varnothing}\abs{\nabla\widetilde{\varphi}_n}^2\text{d}x}{\langle (\Lambda_D - \Lambda_{\varnothing}) \widetilde{f}_n, \widetilde{f}_n \rangle} \ge \frac{k_l}{k_u}\frac{\int_S \sigma_{\varnothing}\abs{\nabla\varphi_n}^2\text{d}x}{\langle (\Lambda_D - \Lambda_{\varnothing}) f_n, f_n \rangle}
        \frac{1-\alpha}{1+\alpha},
\end{equation*}
and, therefore, the claim of \Cref{prop_approx_ratio} follows.

\section{Proof of \Cref{prop_linear}}
\label{proof_2}
In this Appendix, the proof of \Cref{prop_linear} is given.

It follows that
{\small
\begin{equation*}
\begin{split}
    \int_S \sigma_{\varnothing}\abs{\nabla \varphi^n_{\varnothing}}^2\,dx &= \int_S \sigma_{\varnothing} \left(\sum_{k\in \mathcal{I}_n} c_{n,k} \nabla u^k_{\varnothing}\right)\cdot \left(\sum_{h\in \mathcal{I}_n} c_{n,h} \nabla u^h_{\varnothing}\right)\,dx \\
    &=\sum_{k\in \mathcal{I}_n}\sum_{h\in \mathcal{I}_n}c_{n,k} \,c_{n,h} \int_S \sigma_{\varnothing} \nabla u^k_{\varnothing}\cdot\nabla u^h_{\varnothing}\,dx \\
    &\leq \sum_{k\in \mathcal{I}_n}\sum_{h\in \mathcal{I}_n}\abs{c_{n,k}} \abs{c_{n,h}} \sqrt{\int_S \sigma_{\varnothing} \lvert\nabla u^k_{\varnothing}\rvert^2\,dx}\sqrt{\int_S \sigma_{\varnothing} \lvert\nabla u^h_{\varnothing}\rvert^2\,dx}\\
    &=\left(\sum_{k\in \mathcal{I}_n} \abs{c_{n,k}}\sqrt{\int_S \sigma_{\varnothing} \lvert\nabla u^k_{\varnothing}\rvert^2\,dx}\right)^2\\
    &\leq K \left(\sum_{k\in \mathcal{I}_n} \abs{c_{n,k}}\sqrt{\lambda_k}\right)^2 \\
    &\leq K |\mathcal{I}_n| \sum_{k\in \mathcal{I}_n} \abs{c_{n,k}}^2 \lambda_k,
\end{split}
\end{equation*}
}%
where the first line follows from the linearity of the problem \eqref{eqn:dirprob}, the third line follows from the Cauchy-Schwarz inequality, the fifth line follows from
\begin{equation*}
    \int_S \sigo(x) \lvert\nabla u^n_{\varnothing}(x)\rvert^2\,dx\leq K \lambda_n, \quad  n\in |\mathcal{I}_n|,
\end{equation*}
and the sixth line follows from applying the Cauchy-Schwarz inequality again. 

\section{Localised potentials and eigenfunctions}
\label{app_C}
As discussed in \cite[Section 3.2]{Ge08}, a localised potential $f$ able to (i) make the power absorbed by the domain $D$ negligible and (ii) ensure the circulation of an electrical current density in a prescribed point $z\not\in D^*$, can be obtained by solving a regularised version of 
\begin{equation*}
    (\Lambda_D-\Lambda_{\varnothing})f=v_{z,d},
\end{equation*}
where $v_{z,d}= \left. \varphi_{z,d} \right|_{\partial \Omega}$ is the trace, evaluated on $\partial \Omega$, of the electrical scalar potential generated by a dipole current source placed in $z$ and directed along $\mathbf{d}$, with $\abs{\mathbf{d}}=1$, i.e., $\varphi_{z,d}$ is the solution of
\begin{equation*}
    \begin{cases}
        \nabla\cdot(\sigo(x)\nabla \varphi_{z,d}(x))=-\zeta \mathbf{d}\cdot\nabla \delta(x-z) \\
        \sigo(x)\partial_n \varphi_{z,d}(x)=0,
    \end{cases}
\end{equation*}
where $\zeta=\qty{1}{\ampere \metre}$.

Specifically, by means of the Tikhonov regularisation method, a sequence of scalar potentials making the net electrical power in $D$ negligible, while providing a non-vanishing current density in $z$, is given by  (see \cite{Ge08})
\begin{equation*}
    f_{\alpha}=\left[ (\Lambda_D-\Lambda_{\varnothing})^* (\Lambda_D-\Lambda_{\varnothing})+\alpha I\right] ^ {-1} (\Lambda_D-\Lambda_{\varnothing})^* v_{z,d},
\end{equation*}
where the superscript $^*$ denotes the adjoint operator. The sequence of localised potentials $f_{\alpha}$ can be expanded on the basis of the eigenfunctions of $\Lambda_D-\Lambda_{\varnothing}$ as
\begin{equation}\label{eqn_fa}
    f_{\alpha}=\sum_{k=1}^{+\infty}\frac{\lambda_k^2}{\lambda_k^2+\alpha}\frac{\langle v_{z,d}, g_k \rangle}{\lambda_k}g_k.
\end{equation}
The following result holds for $f_{\alpha}$.
\begin{thm}[Theorem 3.6, \cite{Ge08}]
    For every $z\notin D^*$, the solutions $\varphi^{\alpha}_{\varnothing}$ of
    \begin{equation*}
        \nabla\cdot(\sigo\nabla \varphi^{\alpha}_{\varnothing})=0 \: \text{in }\Omega \quad \sigo\partial_n\varphi^{\alpha}_{\varnothing}=f_{\alpha} \: \text{on }\partial\Omega
    \end{equation*}
    fulfils
    \begin{equation*}
        \lim_{\alpha \to 0^+} \frac{\abs{\nabla \varphi^{\alpha}_{\varnothing}(z)}}{\langle (\Lambda_D-\Lambda_{\varnothing})f_{\alpha},f_{\alpha}\rangle^{3/4}} = +\infty, \quad 
        \lim_{\alpha \to 0^+} \frac{\int_D \abs{\nabla\varphi^{\alpha}_{\varnothing}(x)}^2\,dx}{\langle (\Lambda_D-\Lambda_{\varnothing})f_{\alpha},f_{\alpha}\rangle^{3/2}}=0.
    \end{equation*}
\end{thm}
\begin{cor}
    If $\sigma(x)$ is constant in an open neighbourhood $S$ of $z\not\in D^*$, then
    \begin{equation*}
        \lim_{\alpha \to 0^+} \frac{\int_{S}\sigo(x)\abs{\nabla \varphi^{\alpha}_{\varnothing}(x)}^2\,dx}{\langle (\Lambda_D-\Lambda_{\varnothing})f_{\alpha},f_{\alpha}\rangle^{3/2}} = +\infty, \quad \lim_{\alpha \to 0^+} \frac{\int_D \sigo(x)\abs{\nabla \varphi^{\alpha}_{\varnothing}(x)}^2\,dx}{\langle (\Lambda_D-\Lambda_{\varnothing})f_{\alpha},f_{\alpha}\rangle^{3/2}}=0.
    \end{equation*}
\end{cor}
\begin{proof}
    Let $B(z,r)$ be an open ball centred in $z$ of radius $r$, such that $B\subset S$. If $\sigma(x)$ is constant in $B$, then $\mathbf{d}\cdot\nabla \varphi^{\alpha}_{\varnothing}(x)$ is a harmonic function and therefore $\abs{\mathbf{d}\cdot\nabla \varphi^{\alpha}_{\varnothing}(x)}^2$ is sub-harmonic. 
    From the mean value property for sub-harmonic functions \cite{Gilbarg2001}, it follows 
    \begin{equation*}
        \frac{1}{\abs{B}}\int_B \abs{\mathbf{d}\cdot\nabla \varphi^{\alpha}_{\varnothing}(x)}^2\,dx \geq \abs{\mathbf{d}\cdot\nabla \varphi^{\alpha}_{\varnothing}(z)}^2,
    \end{equation*}
    where $\abs{B}$ is the measure of the ball $B$.

    Therefore, it is concluded that
    \begin{equation*}
         \frac{\int_{S}\abs{\nabla\varphi^{\alpha}_{\varnothing}(x)}^2\,dx}{\langle (\Lambda_D-\Lambda_{\varnothing})f_{\alpha},f_{\alpha}\rangle^{3/2}}\geq
         \frac{\int_{B}\abs{\mathbf{d}\cdot\nabla \varphi^{\alpha}_{\varnothing}(x)}^2\,dx}{\langle (\Lambda_D-\Lambda_{\varnothing})f_{\alpha},f_{\alpha}\rangle^{3/2}}\geq
         \abs{B}\frac{\abs{\mathbf{d}\cdot\nabla \varphi^{\alpha}_{\varnothing}(z)}^2}{\langle (\Lambda_D-\Lambda_{\varnothing})f_{\alpha},f_{\alpha}\rangle^{3/2}},
    \end{equation*}
    where the first inequality comes from $\abs{\nabla \varphi_{\varnothing}^{\alpha}}^2\geq \abs{\mathbf{d}\cdot\nabla \varphi^{\alpha}_{\varnothing}}^2$, and $B\subset S$.
    Hence,
    \begin{equation*}
         \lim_{\alpha\to 0^+}\frac{\int_{S} \sigo(x)\abs{\nabla \varphi^{\alpha}_{\varnothing}(x)}^2\,dx}{\langle (\Lambda_D-\Lambda_{\varnothing})f_{\alpha},f_{\alpha}\rangle^{3/2}} \geq
         \sigma_{bg}^m \abs{B} \lim_{\alpha\to 0^+} \frac{\abs{\mathbf{d}\cdot\nabla \varphi^{\alpha}_{\varnothing}(z)}^2}{\langle (\Lambda_D-\Lambda_{\varnothing})f_{\alpha},f_{\alpha}\rangle^{3/2}} = +\infty,
    \end{equation*}
    and
    \begin{equation*}
        \lim_{\alpha \to 0^+} \frac{\int_D \sigo(x)\abs{\nabla\varphi^{\alpha}_{\varnothing}(x)}^2\,dx}{\langle (\Lambda_D-\Lambda_{\varnothing})f_{\alpha},f_{\alpha}\rangle^{3/2}}
        \leq \sigma_{bg}^M\lim_{\alpha \to 0^+} \frac{\int_D \abs{\nabla\varphi^{\alpha}_{\varnothing}(x)}^2\,dx}{\langle (\Lambda_D-\Lambda_{\varnothing})f_{\alpha},f_{\alpha}\rangle^{3/2}}=0.
    \end{equation*}
    
\end{proof}
\begin{rem}\label{rem_ga}
    The sequence 
    \begin{equation*}
        w_{\alpha}=\frac{f_{\alpha}}{\norm{f_{\alpha}}}
    \end{equation*}
    ensures
    \begin{equation*}
        \lim_{\alpha \to 0^+} \frac{\int_{S}\sigo(x)\abs{\nabla \varphi^{\alpha}_{\varnothing}(x)}^2\,dx}{\int_{D}\sigo(x)\abs{\nabla \varphi^{\alpha}_{\varnothing}(x)}^2\,dx} =+\infty,
    \end{equation*}
    where $\varphi^{\alpha}_{\varnothing}$ solves
    \begin{equation*}
        \nabla\cdot(\sigo\nabla \varphi^{\alpha}_{\varnothing})=0 \: \text{in }\Omega \quad \text{and}\quad \sigo\partial_n\varphi^{\alpha}_{\varnothing}=w_{\alpha} \: \text{on }\partial\Omega.
    \end{equation*}

    Furthermore, the sequence ${w_{\alpha}}$ is a kernel sequence, as follows using the arguments in the proof of \Cref{thm_un}.
\end{rem}
\begin{rem}\label{app_alp}
    The sequence $w_n=w_{\alpha_n}$, where $\alpha_n \to 0$ as $n \to +\infty$, constitutes an ALP sequence, as defined in \Cref{sec_ti}.
\end{rem}

\subsection{Assumptions}
Common assumptions for the well-known Factorization Method are (see \cite{fm_2}, for instance):
\begin{align}
    \ln(\lambda_k)&\approx \ln (c_{\lambda})+k\ln(q_{\lambda}) \label{asp_app_1} \\
    \ln(\langle v_{z,d},g_k \rangle^2)& \approx \ln( c_{z})+k\ln(p_z) \label{asp_app_2}.
\end{align}

The assumptions introduced in \Cref{sec_ti} are directly derived from \eqref{asp_app_1} and \eqref{asp_app_2}. Indeed, from \cite{Ge08}
\begin{equation*}
    \langle v_{z,d},g_k \rangle =\mathbf{d}\cdot\nabla u^k_{\varnothing}(z),
\end{equation*}
and, hence, in a 2D setting,
\begin{equation*}
    \abs{\nabla u^k_{\varnothing}(z)}^2=
    \left(\hat{\mathbf{i}}_x\cdot  \nabla u^k_{\varnothing}(z) \right)^2
    +\left(\hat{\mathbf{i}}_y\cdot \nabla u^k_{\varnothing}(z) \right)^2
    =\langle v_{z,i_x},g_k \rangle^2+\langle v_{z,i_y},g_k \rangle^2,
\end{equation*}
therefore, it follows that
\begin{equation}\label{eqn_log_1}
    \ln\left(\sigo(z) \abs{\nabla u^k_{\varnothing}(z)} \right)=\ln\left(\sigo(z)\right)+ \ln\left(\langle v_{z,i_x},g_k \rangle^2+\langle v_{z,i_y},g_k \rangle^2\right).
\end{equation}
From \eqref{eqn_aps_2}
\begin{align*}
    \ln( \langle v_{z,i_x},g_k \rangle^2 ) & \approx \ln( c_{z,x})+k\ln(p_{z,x}), \\
    \ln( \langle v_{z,i_y},g_k \rangle^2 ) & \approx \ln( c_{z,y})+k\ln(p_{z,y}). \\
\end{align*}
If $p_{z,x}=p_{z,y}$, assumption \eqref{asp_pbg} follows directly from \eqref{eqn_log_1}. For $p_{z,x}>p_{z,y}$
\begin{equation*}
    \ln\left(\langle v_{z,i_x},g_k \rangle^2+\langle v_{z,i_y},g_k \rangle^2\right)=\ln\left(\langle v_{z,i_x},g_k \rangle^2\right)+\ln\left(1+\frac{\langle v_{z,i_y},g_k \rangle^2}{\langle v_{z,i_x},g_k \rangle^2}\right),
\end{equation*}
that, for large $k$, reduces to
\begin{equation*}
    \ln(\sigo(z) \abs{\nabla u^k_{\varnothing}(z)}^2)\sim \ln (\sigo(z)) + \ln\left(\langle v_{z,i_x},g_k \rangle^2\right) \approx \ln( c_z)+k\ln(p_{z,x}),
\end{equation*}
and analogously for $p_{z,x}<p_{z,y}$.

As a final remark, it is worth noticing that in \Cref{sec_ti}, the exponential decays are assumed to hold asymptotically, in coherence with the above derivation.

\subsection{Proof of \Cref{prop_loc_eig}}
Let $S\Subset\Omega$ be a set that fulfils \eqref{eqn_seq_un} for the sequence of eigenfunctions, i.e.,
\begin{equation*}
    \int_S \sigo(x)\abs{\nabla u^k_{\varnothing}(x)}^2\, dx \leq K\lambda_k,\quad \forall\,k\in\mathbb{N}.
\end{equation*}
Let $w_n$ be the sequence of localised potentials defined in \Cref{app_alp}, for an arbitrary but prescribed direction $\mathbf{d}$. This sequence can be expanded on the eigenfunctions $g_k$ as follows:
\begin{equation*}
    w_n=\frac{1}{\sqrt{\sum_{k=1}^{+\infty} \abs{c_{n,k}}^2}}\sum_{k=1}^{+\infty} c_{n,k}g_k,
\end{equation*}
where (see Equation \eqref{eqn_fa})
\begin{equation}\label{eqn_c_alpha}
    c_{n,k}=\frac{\lambda_k^2}{\lambda_k^2+\alpha_n}\frac{\langle v_{z,d},g_k \rangle}{\lambda_k}.
\end{equation}
Preliminarily, it is observed that
\begin{equation*}
    \frac{\int_S \sigo(x)\abs{\nabla \varphi^{n}_{\varnothing}(x)}^2\, dx}{\langle (\Lambda_D-\Lambda_{\varnothing})w_n,w_n\rangle}\leq
    K \frac{\left(\sum_{k=1}^{+\infty} \abs{c_{n,k}} \sqrt{\lambda_k}\right)^2}{\sum_{k=1}^{+\infty} \abs{c_{n,k}}^2 \lambda_k},
\end{equation*}
where the upper bound for the numerator is obtained with the same technique as \Cref{proof_2}. Therefore, the claim of \Cref{prop_loc_eig} is achieved by proving that the ratio
\begin{equation}\label{eqn_ratio}
    \frac{\left(\sum_{k=1}^{+\infty} \abs{c_{n,k}} \sqrt{\lambda_k}\right)^2}{\sum_{k=1}^{+\infty} \abs{c_{n,k}}^2 \lambda_k}
\end{equation}
is bounded for an ALP sequence. 

Substituting \eqref{eqn_c_alpha} in the round brackets of \eqref{eqn_ratio} gives
\begin{equation}
\label{eqn_aps_1}
     I_n \coloneqq \sum_{k=1}^{+\infty} \frac{\lambda_k^2}{\lambda_k^2+\alpha_n}\frac{\abs{\langle v_{z,d},g_k\rangle}}{\sqrt{\lambda_k}}.
\end{equation}
An upper bound to the series of \eqref{eqn_aps_1}, for a prescribed $n$, can be obtained by splitting the sum into two parts: the sum of the terms for which $\lambda_k^2\leq \alpha_n/\gamma$, where $\gamma \gg 1$, and the sum made up of the remaining terms. It is worth noting that
\begin{equation*}
    \lambda_k^2\leq \alpha_n / \gamma \iff k \geq k_n +\beta+\delta ,
\end{equation*}
where 
\begin{equation*}
    k_n= \frac{\ln \alpha_n}{2\ln q_{\lambda}},\quad 
    \beta=-\frac{\ln c_{\lambda}}{\ln q_{\lambda}},\quad
    \delta=-\frac{\ln \gamma}{2\ln q_{\lambda}}.
\end{equation*}


The sum in \eqref{eqn_aps_1} is split as
\begin{equation}
\label{eqn_aps_2}
    I_n = \sum_{k=1}^{\lceil k_n+\beta+\delta \rceil-1} \frac{\lambda_k^2}{\lambda_k^2+\alpha_n} \frac{\abs{\langle v_{z,d},g_k\rangle}}{\sqrt{\lambda_k}}
    \ +\sum_{k=\lceil k_n+ \beta + \delta \rceil}^{+\infty} \frac{\lambda_k^2}{\lambda_k^2+\alpha_n}\frac{\abs{\langle v_{z,d},g_k\rangle}}{\sqrt{\lambda_k}},
\end{equation}
where $\lceil x \rceil=\min \{n\in\mathbb{N} \mid n\geq x\}$.

The first term in \eqref{eqn_aps_2} can be upper-bounded as follows
\begin{equation*}
\begin{split}
    \sum_{k=1}^{\lceil k_n+ \beta + \delta \rceil -1} \frac{\lambda_k^2}{\lambda_k^2+\alpha_n}\frac{\abs{\langle v_{z,d},g_k\rangle}}{\sqrt{\lambda_k}} & 
    \leq \sum_{k=1}^{\lceil k_n+ \beta + \delta \rceil -1} \frac{\abs{\langle v_{z,d},g_k\rangle}}{\sqrt{\lambda_k}}\\
    & \approx 
    \sqrt{\frac{c_z}{c_{\lambda}}}\sum_{k=1}^{\lceil k_n + \beta + \delta \rceil -1} \left(\frac{p_z}{q_{\lambda}}\right)^{k/2} \\
    &= \sqrt{\frac{c_z}{c_{\lambda}}} r\frac{1-r^{\lceil k_n+ \beta + \delta \rceil -1}}{1-r},
\end{split}
\end{equation*}
where $r=\sqrt{p_z/q_{\lambda}}$.
In the second term of \eqref{eqn_aps_2} $\lambda_k^2$ is negligible with respect to $\alpha_n$, hence
\begin{equation*}
\begin{split}
    \sum_{k=\lceil k_n+ \beta + \delta \rceil}^{+\infty} \frac{\lambda_k^2}{\lambda_k^2+\alpha_n}\frac{\abs{\langle v_{z,d},g_k\rangle}}{\sqrt{\lambda_k}}
    &\approx 
    \frac{1}{\alpha_n} \sum_{k=\lceil k_n+ \beta + \delta \rceil}^{+\infty} \lambda_k^{3/2}\abs{\langle v_{z,d},g_k\rangle}\\
    &\approx
    \frac{c_{\lambda}^{3/2}c_z^{1/2}}{\alpha_n} \sum_{k=\lceil k_n+ \beta + \delta \rceil}^{+\infty} s^k\\
    &= \frac{c_{\lambda}^{3/2}c_z^{1/2}}{\alpha_n}\frac{s^{\lceil k_n+ \beta + \delta \rceil}}{1-s},
\end{split}
\end{equation*}
where $s=q_{\lambda}^{3/2}p_z^{1/2}$, and it has been exploited that $s<1$ (see \Cref{rem_s1}). 

In summary, an upper bound for $I_n$ is
\begin{equation}
\label{eqn_ub_n}
    I_n \le 
    \sqrt{\frac{c_z}{c_{\lambda}}}r\frac{1-r^{\lceil k_n+ \beta + \delta \rceil-1}}{1-r}
    +\frac{c_{\lambda}^{3/2}c_z^{1/2}}{\alpha_n}\frac{s^{\lceil k_n+ \beta + \delta \rceil}}{1-s}.
\end{equation}

The upper bound appearing in the right-hand side of \eqref{eqn_ub_n} depends on $n$ through the three terms $r^{\lceil k_n + \beta + \delta \rceil}$, $s^{\lceil k_n+ \beta + \delta \rceil}$ and $\alpha_n$. This expression is involved, but it can be cast as the product of a bounded function multiplied by $\alpha_n^{\frac{\ln r}{2 \ln q_{\lambda}}}$. To obtain this form for the upper bound, $\lceil k_n+ \beta + \delta \rceil$ is conveniently expressed as
\begin{equation}
    \lceil k_n+ \beta + \delta \rceil=k_n+ \beta + \delta +\varepsilon_n,
\end{equation}
where $0\leq \varepsilon_n < 1$. Consequently, it follows that
\begin{equation}\label{eqn_r1}
    r^{\lceil k_n+ \beta + \delta \rceil}
    = r^{\beta +\delta + \varepsilon_n} \  \alpha_n^{\frac{\ln r}{2 \ln q_{\lambda}}}, \quad         \frac{s^{\lceil k_n+ \beta + \delta \rceil}}{\alpha_n}=s^{\beta +\delta +\varepsilon_n} \  \alpha_n^\frac{\ln r}{2\ln q_{\lambda}}.
\end{equation}
where it has been exploited that $r^{\frac{\ln \alpha_n}{2 \ln q_\lambda}}=\alpha_n^{\frac{\ln r}{2 \ln q_\lambda}}$. If $r<1$, then $\ln r / \ln q_{\lambda} >0$ and the bound at the right-hand side of \eqref{eqn_ub_n} for large $n$ is
\begin{equation}
    \sqrt{\frac{c_z}{c_{\lambda}}} \frac{r}{1-r} + \mathcal{O} \left( \alpha_n^\frac{\ln r}{2\ln q_{\lambda}} \right), \ n \to +\infty.
\end{equation}
For $r>1$, $r^{\lceil k_n+ \beta + \delta \rceil-1} \gg 1$ for large $n$ and, therefore, \eqref{eqn_ub_n} reduces to
\begin{equation*}
   I_n \le b_{1,n} \alpha_n^\frac{\ln r}{2\ln q_{\lambda}} \quad \text{as } n \to +\infty,
\end{equation*}
where
\begin{equation*}
    b_{1,n}=\sqrt{\frac{c_z}{c_{\lambda}}}\frac{r^{\delta+\beta +\varepsilon_n}}{r-1}
    +\frac{s^{\delta + \beta +\varepsilon_n}c_{\lambda}^{3/2}c_z^{1/2} }{1-s}.
\end{equation*}
In addition, $b_{1,n}$ is bounded, i.e. $b_{1,n} \leq b_1^{max} < + \infty$, $\forall\, n\in\mathbb{N}$.

Regarding the denominator of \eqref{eqn_ratio}, a similar argument is applied. Specifically, the terms of the series are divided into two groups: (i) those for which $\lambda_k^2 \leq \alpha_n/\gamma$, and (ii) the remaining terms. Here, $\gamma \gg 1$ is the same constant introduced to evaluate the upper bound to the numerator of \eqref{eqn_ratio}.
For $k\geq k_n+ \beta +\delta$, it follows that $\lambda_k^2 \le \alpha_n / \gamma \ll \alpha_n$, and
\begin{equation*}
\begin{split}
     \sum_{k=1}^{+\infty} \left(\frac{\lambda_k^2}{\lambda_k^2+\alpha_n}\right)^2\frac{\abs{\langle v_{z,d},g_k\rangle}^2}{\lambda_k}
    & \ge
    \sum_{k=\lceil k_n+ \beta + \delta \rceil}^{+\infty} \left(\frac{\lambda_k^2}{\lambda_k^2+\alpha_n}\right)^2\frac{\abs{\langle v_{z,d},g_k\rangle}^2}{\lambda_k}\\
    & \approx \frac{1}{\alpha_n^2}\sum_{k=\lceil k_n+ \beta + \delta \rceil }^{+\infty} \lambda_k^3\abs{\langle v_{z,d},g_k\rangle}^2\\
    &\approx \frac{c_{\lambda}^3 c_z}{1-s^2} (s^2)^{\beta + \delta + \varepsilon_n} \alpha_n^{\frac{\ln r}{\ln q_{\lambda}}}\\
    & \ge b_2^{min} \alpha_n^{\frac{\ln r}{\ln q_{\lambda}}},
\end{split}
\end{equation*}
where
\begin{equation*}
    \begin{split}
        b_2^{min} & = \frac{c_{\lambda}^3 c_z}{1-s^2}(s^2)^{\beta + \delta+1}.
\end{split}
\end{equation*}
Finally,
\begin{equation*}
    \frac{\left(\sum_{k=1}^{+\infty} \abs{c_{\alpha,k}} \sqrt{\lambda_k}\right)^2}{\sum_{k=1}^{+\infty} \abs{c_{\alpha,k}}^2 \lambda_k} 
    \leq \frac{(b_1^{max})^2\alpha_n^{\frac{\ln r}{\ln q_{\lambda}}}}{b_2^{min}\alpha_n^{\frac{\ln r}{\ln q_{\lambda}}}}=\frac{(b_1^{max})^2}{b_2^{min}}.
\end{equation*}

\begin{rem}\label{rem_s1}
 The condition $s=q_{\lambda}^{3/2}p_z^{1/2}<1$ follows directly from the fact that the sequences $\{\lambda_k\}_k$ and $\{\langle v_{z,d},g_k\rangle^2\}_k$ both converge to zero. Indeed, convergence to zero implies that $q_{\lambda}<1$, $p_z<1$, which immediately guarantees $s<1$.
\end{rem}

\clearpage

\bibliographystyle
{iopart-num}
\bibliography{biblioCFPPT}
\end{document}